\setlist[enumerate]{label={(\roman*)}, nolistsep}
\theoremstyle{definition}
\newtheorem{remark}{Remark}
\newtheorem{proposition}{Proposition}
\DeclareSIUnit\Molar{M}
\renewcommand\epsilon{\varepsilon}
\renewcommand\phi{\varphi}
\renewcommand\theta{\vartheta}
\renewcommand\rho{\varrho}
\newcommand\diff{\mathrm{d}}
\renewcommand\geq\geqslant
\renewcommand\leq\leqslant
\begin{document}


\title{Unfolding the geometric structure and multiple timescales of the urea--urease pH oscillator}

\newcommand\zib{\affiliation{Zuse Institute Berlin, Takustra{\ss}e 7, 14195 Berlin, Germany}}
\newcommand\fub{\affiliation{Freie Universit{\"a}t Berlin, Department of Mathematics and Computer Science, Arnimallee 6, 14195 Berlin, Germany}}
\newcommand\amst{\affiliation{KdVI Institute for Mathematics, University of Amsterdam, The Netherlands}}
\newcommand\pots{\affiliation{Universität Potsdam, Institut für Mathematik, Karl-Liebknecht-Straße 24, 14476 Potsdam, Germany.}}

\author{Arthur V. Straube} \zib \fub \email{straube@zib.de}
\author{Guillermo Olicón Méndez} \pots
\author{Stefanie Winkelmann} \zib
\author{Felix Höfling} \fub \zib
\author{Maximilian Engel} \fub \amst

\begin{abstract}
We study a two-variable dynamical system modeling pH oscillations in the urea–urease reaction within giant lipid vesicles --- a problem that intrinsically contains multiple, well-separated timescales. Building on an existing, deterministic formulation via ordinary differential equations, we resolve different orders of magnitude within a small parameter and analyze the system's limit cycle behavior using geometric singular perturbation theory (GSPT). By introducing two different coordinate scalings --- each valid in a distinct region of the phase space --- we resolve the local dynamics near critical fold points, using the extension of GSPT through such singular points due to Krupa and Szmolyan. This framework enables a geometric decomposition of the periodic orbits into slow and fast segments and yields closed-form estimates for the period of oscillation. In particular, we link the existence of such oscillations to an underlying biochemical asymmetry, namely, the differential transport across the vesicle membrane.
\end{abstract}

\keywords{Fast-slow system, geometric singular perturbation theory, relaxation oscillations, biochemical oscillators, clock reactions}

\maketitle

\section{Introduction}

Oscillatory rhythms are ubiquitous in biophysical systems, underpinning essential functions such as spatiotemporal self-organization, cellular signaling, metabolism, and homeostasis. \cite{epstein:PD1991,goldbeter:book1996,epstein:book1998,novak:NRMCB2008,deKepper:book2009,yashin:RPP2012,goeth:CP2025} 
These dynamics are often mediated by enzymes, which play a central role in intracellular kinetics and typically operate on multiple timescales, embedded in complex feedback mechanisms that regulate their activity.\cite{goldbeter:FEBSL1993,barkai:N1997,cornish-bowden:book2004,alon:book2007,rubin:JCP2016,milster:PRE2024} Enzymatic activity and cellular function are highly sensitive to environmental factors, in particular to the level of acidity or pH.\cite{alberty:BBA1954,cornish-bowden:book2004,fidaleo:CBEQ2003,casey:NRMCB2010,yashin:RPP2012} Systems exhibiting periodic changes in pH are known as \textit{pH oscillators}.\cite{rabai:ACHMC1998,mcilwaine:CPL2006,orban:ACR2015,horvath:APR2018,duzs:CSC2023} While many studies of such oscillators have focused on closed reactors, recent work has increasingly turned to open reaction compartments that exchange matter with their surroundings, \cite{miele:Proc2016,miele:LNBE2018,muzika:PCCP2019,straube:JPCL2021,miele:JPCL2022,leathard:Ch23,ridgway-brown:ACSO25,itatani:CRRS2025} enabling the sustained operation of chemical switches or clocks.\cite{lente:NJC2007,hu:JPCB2010,bubanja:RKMC2018} In this work, we analyze the urea--urease reaction confined to a lipid vesicle---a minimal model system in which urease catalyzes the hydrolysis of urea in a pH-dependent manner, while the vesicle membrane permits exchange of acid and substrate with the surrounding medium. This biologically and technologically relevant setting\cite{mai:CC2021,galanics:CPC2024,bashir:MSDE2024,ivanov:ACIE2025,mahmud:ACSO2025} gives rise to fast–slow oscillatory dynamics that can be captured by low-dimensional models.\cite{bansagi:JPCB2014,straube:JPCB2023} 
Controlling the timescale separation, we uncover the geometric structure underlying the resulting limit cycle dynamics using methods from geometric singular perturbation theory (GSPT) \cite{kuehn:book2015, wechselberger:book2020} which has proven to be a powerful mathematical tool in the context of (bio)chemical reactions \cite{engel:A2023, Gucwa:DCDS2009, Kosiuk:SIADS2011, kuehn:JNS2015}.

\paragraph*{The model.}

We study the two-variable ODE model derived in Ref.~\citenum{straube:JPCB2023}, which deterministically describes pH oscillations in the urea–urease reaction confined to a giant lipid vesicle. Further reduction of this model (\cref{sec:realistic-model}) leads to the following dynamical system in the dimensionless variables $s \geq 0$ and $h \geq 0$, representing rescaled concentrations of the substrate molecules \ce{S} and hydrogen ions \ce{H+}, respectively:
\begin{subequations} \label{eq:rre}
    \begin{align}
    \frac{ds}{dt} & = f(s,h) := -r(h)  s + K_s\,,  \label{eq:rre-s} \\
    \frac{d h}{dt} & = g(s, h) := - q(s,h) + K_h(1-h)\,,\label{eq:rre-h}
    \end{align}
\end{subequations}
where $K_s$, $K_h>0$ are positive constants. The rate function $r(h)$, which determines the consumption of the substrate $s$, has the form
\begin{equation}
  r(h) = \bigl(\beta \epsilon_1/h + 1 + \beta h/\epsilon_1 \bigr)^{-1} >0,
  \label{eq:fun-r(h)}
\end{equation}
with a small parameter $\epsilon_1 > 0$ and some constant $\beta > 0$. 
This function exhibits a single maximum and tends to zero as $h\to \infty$ or $h\to 0$.
The consumption of hydrogen $h \geq 0$ is governed by the reaction speed $q(s,h)$, which is given as the non-negative root of the quadratic equation $q^2 + v(h) q - K\epsilon_2^{-1} r(h) h^2 s = 0$:
\begin{align}
    q(s,h)  =  - \frac{1}{2} v(h) + \frac{1}{2}\sqrt{v(h)^2+4 K \epsilon_2^{-1} r(h) h^2 s} \geq 0 \,, \label{eq:fun-q(s-h)}
\end{align}
where $v(h)$ is defined as
\begin{align}
    v(h) & := \alpha K \epsilon_2^{-1} h^2 -K_h(1-h) \, ,\label{eq:v(h)} 
\end{align}
with constants $\epsilon_2$, $\alpha$, $K$, $K_h >0$.
We note that for fixed $s$, the function $q(s,h)$ vanishes as $h\to \infty$, while as $h \to 0$ it asymptotically behaves like $-v(h)$.
Second, $\epsilon_1$ appears only in the combination $h/\epsilon_1$ and $\epsilon_2$ only in $h^2/\epsilon_2$; this observation has been motivating for the approach followed below.
An exemplary numerical solution of the ODE system~\eqref{eq:rre} is plotted in \cref{fig:limit-cycle-start}(a).

\begin{figure}
    \includegraphics[width=0.9\textwidth]{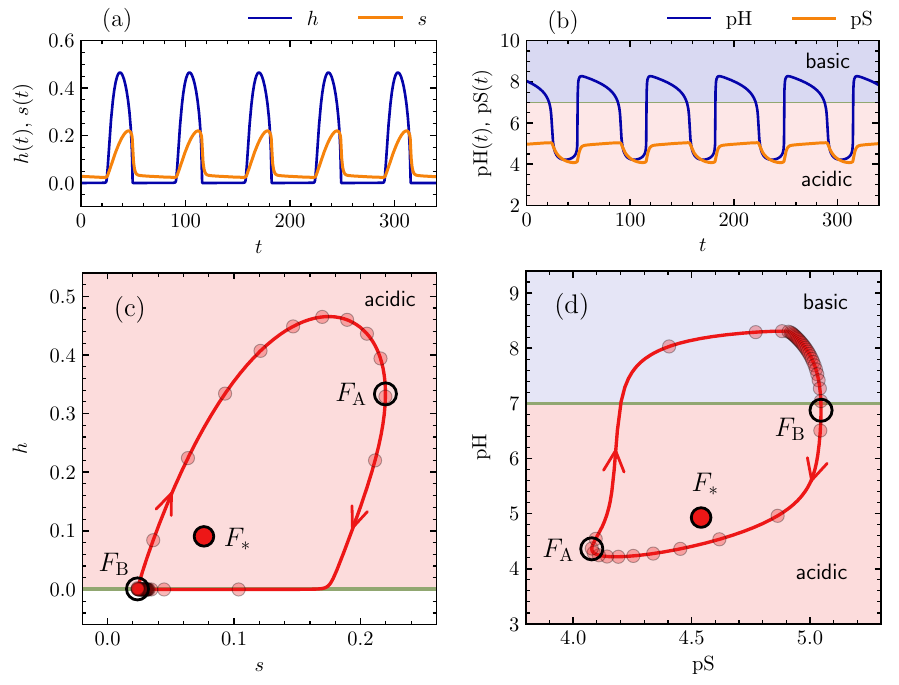}
    \caption{
    Panels (a) and (b) show a typical numerical solution of \cref{eq:rre} for the parameter values listed in \cref{table:reduced-parameters}; panels (c) and (d) depict the corresponding limit cycles for the same data (red solid lines).
    Left panels [(a) and (c)] use the original $(s,h)$ variables, while right panels [(b) and (d)] employ the negative logarithms $(\ce{pS},\ce{pH})$, commonly used in chemistry \cite{straube:JPCL2021,straube:JPCB2023}, see  \cref{{eq:logSH}}.
    In the bottom panels, arrows indicate the direction of motion, and disks show the evolution of the variables sampled equidistantly in time.
    The limit cycle encloses the only fixed point (dark red disc in panels (c) and (d)), which is repelling and located at $F_*=(s_*,h_*)\approx (0.0762, 0.0906)$ and $(\ce{pS}_*,\ce{pH}_*)\approx (4.54, 4.93)$, respectively.
    The turning points $F_\textup{A}$ and $F_\textup{B}$ mark those points on the limit cycle with the largest and smallest values of $s$ and the smallest and largest values of $\ce{pS}$, respectively. Note that smaller values of \ce{pS} and \ce{pH} correspond to larger values of $s$ and $h$ and vice versa, which effectively flips the direction of the axes and changes the relative location of points $F_\textup{A}$ and $F_\textup{B}$.
    The horizontal green line represents the neutral value, $\ce{pH} = 7$
    ($h=h_\textup{max}$, see \cref{ssec:ode-lim-cycle}), separating acidic ($\ce{pH} < 7$, $h > h_\textup{max}$, shaded light red) and basic ($\ce{pH} > 7$, $h < h_\textup{max}$, shaded light blue in panels (b) and (d)) domains.
    }
    \label{fig:limit-cycle-start}
\end{figure}

\paragraph*{Biochemical background and interpretation.} 

System \eqref{eq:rre} models the urease-assisted catalysis of urea, serving as the substrate \ce{S}, with the reaction rate depending on the hydrogen ion (or proton) concentration, i.e., on the pH value quantifying the acidity. Urease, an enzyme acting as a biological catalyst, regulates the speed of the catalytic reaction via the term $r(h)s$, where $r(h)$ is the hydrogen-dependent reaction rate. The effective decay of hydrogen \cite{bansagi:JPCB2014,straube:JPCB2023} is governed by the function $q(s,h)$, which captures the nonlinear coupling between substrate and proton concentrations.

The described process occurs in an open reactor -- a unilamellar vesicle enclosed by a membrane partially permeable to chemical species. Exchange with the surrounding medium ensures a continuous supply of both substrate and acid, with influx rate constants $K_s$ and $K_h$, respectively. Inside the vesicle, the core reaction acts as a chemical switch or clock,\cite{lente:NJC2007,hu:JPCB2010,bubanja:RKMC2018} rapidly driving the system from acidic to basic conditions. In turn, the external inflow of acid and substrate resets the pH clock after each cycle, enabling sustained oscillations. These oscillations result from the interplay between internal switching dynamics and external replenishment, and are characterized by fast--slow transitions between acidic and basic phases. A detailed derivation of the simplified, dimensionless system \eqref{eq:rre} from a previously reduced model of the urea--urease reaction network\cite{straube:JPCB2023} is presented in \cref{sec:realistic-model}.

Previous studies have shown that the system under consideration---like those in Refs.~\citenum{bansagi:JPCB2014,straube:JPCB2023}---captures a practically relevant and experimentally realistic setting; for further insights, see Ref.~\citenum{acar:CSBB2025}. One of the strengths of our study is to build on this foundation and rigorously analyze the model's complex oscillatory behavior. However, due to this realism and unlike many mathematically idealized systems, the present  model does not feature a single small parameter that would directly justify a standard fast–slow decomposition or suggest an obvious timescale separation. For parameter regimes relevant to experiments, numerical integration of system~\eqref{eq:rre} reveals stable limit cycle solutions, characterized by very different time scales along the segments of the cycle (\cref{fig:limit-cycle-start}). This timescale separation manifests itself in the form of mixed-mode oscillations, which fall outside the direct applicability of classical singular perturbation theory.
In particular, the dynamics is governed by multiple small dimensionless parameter combinations, none of which alone controls the system’s behavior. 

To address this obstacle to analysis via GSPT\cite{kuehn:book2015,wechselberger:book2020}, 
we identify a suitable formalization of the timescale separation that emerges from the interplay of multiple small parameters. This enables the application of GSPT, combining the conventional Fenichel's theory for normally hyperbolic critical manifolds\cite{fenichel:JDE1979} with its extension to nonhyperbolic points, such as folds and canards, developed via blow-up techniques.\cite{krupa:SIAM-JMA2001} Using this framework, we characterize the structure of the limit cycle and quantify its decomposition into alternating fast and slow segments. Two key components of the limit cycle are stable branches of distinct critical manifolds---one under acidic pH conditions, the other under basic pH conditions---each requiring a different rescaling to resolve the local dynamics. We complete the global picture of the oscillation mechanism by constructing transition maps between suitable Poincaré sections.

In \cref{ssec:ode-system}, we present initial observations, establish the existence of a limit cycle, and introduce a rescaling in terms of a single small parameter. The existence of a limit cycle was conjectured in earlier work,\cite{straube:JPCB2023} based on the presence of a single repelling fixed point and an application of the Poincaré–Hopf index theory. Here, we provide a more rigorous proof using the Poincaré–Bendixson theorem. \cref{sec:analysis} contains the core of our analysis: we apply GSPT to resolve the dynamics near the fold points at low and neutral pH (\cref{sec:acid} and \cref{sec:neutral}), construct the global structure of the limit cycle and compare with numerical solutions (\cref{sec:global-picture}). We discuss the implications for the chemical system at hand (\cref{sec:implications})
and conclude in \cref{sec:conclusion}.

\section{Initial Observations and Analytical Setup} \label{ssec:ode-system}

In this section, we briefly discuss a parameter regime where \cref{eq:rre} exhibits oscillatory solutions and we give a necessary conditions for the existence of a limit cycle.
We suggest a coupling of the small parameters $\epsilon_1$ and $\epsilon_2$ 
to proceed with a single small parameter $\epsilon$, which allows converting  system \eqref{eq:rre} into a fast-slow system in standard form.

\subsection{Observation of sustained oscillations} \label{ssec:ode-lim-cycle}

\Cref{fig:limit-cycle-start} shows a typical limit cycle solution of system \eqref{eq:rre}, using the variables $(s,h)$ and, alternatively, their negative logarithms, $(\ce{pS},\ce{pH})$, employed in biochemistry (see  \cref{eq:logSH} for the conversion relation).
From the representations in \cref{fig:limit-cycle-start}, one anticipates the presence of multiple timescales:
in panels (a) and (b), kinks and rapid variations in time
and, in panels (c) and (d), an uneven spacing of markers placed equidistantly in time along the solution, especially in the vicinity of point $F_\textup{B}$.
The chosen parameter values are listed in \cref{table:reduced-parameters} in \cref{sec:realistic-model} and are the same as in a previous study \cite{straube:JPCB2023}, approximately resembling the experimental situation:
while $\beta$ is fixed by the properties of the urease enzyme and
$K_s$ and $K_h$ are given by the specific permeabilities of the vesicle membrane,
$\alpha$, $\epsilon_1$, and $\epsilon_2$ can be controlled through the external concentrations of substrate and hydrogen ion, outside of the vesicle.

The position $h_\textup{max}$ of the maximum of the consumption rate $r(h)$ (see \cref{eq:fun-r(h)}) is determined by $r'(h_\textup{max})=0$, yielding $h_{\text{max}}=\epsilon_1$. For the given parameters, it corresponds to the neutral value of $\ce{pH}=7$ (green horizontal lines in Figs.~\ref{fig:limit-cycle-start}(b), \ref{fig:limit-cycle-start}(d)).
At $h=h_{\text{max}}$, the conversion of substrate into product is most efficient and any deviation from the optimal level of $\ce{pH}$ slows down the corresponding reaction rate $r(h)$. The limit cycle crosses this line and one part of it belongs to the acidic domain (shaded light red) with $\ce{pH} < 7$ ($h>h_{\text{max}}$) and part to the basic domain (shaded light blue), where $\ce{pH} > 7$ ($h<h_{\text{max}}$). 
Using linear scales for the variables $s$ and $h$, as in \cref{fig:limit-cycle-start}(a) and (c), the basic region is not well resolved: the neutral value, $\ce{pH} = 7$, corresponds to $h=h_{\text{max}} \approx \num{7.7e-4}$), being compressed and pushed against the $h=0$ axis. Accordingly, the turning point $F_\textup{A}$ lies deeply in the acidic domain, while the turning point $F_\textup{B}$ is located in the vicinity of the neutral \ce{pH}. As we will show below, the position of the turning point $F_\textup{B}$, which underlies the geometric structure of the limit cycle, is determined exactly by the maximum of the function $r(h)$.

\subsection{Existence of the limit cycle} \label{ssec:lim-cycle-exist}

Generally, the system under consideration, \cref{eq:rre}, exhibits either a steady-state solution or oscillations at long times. Focusing on the limit cycle, we briefly outline the region of parameters where sustained oscillations take place.

The $s$- and $h$-nullclines of system \eqref{eq:rre}, denoted as $\mathcal{N}_s$ and $\mathcal{N}_h$, respectively, are parametrized as
\begin{subequations} \label{eq:rre-nc}
\begin{align}
    \mathcal{N}_s = \{ (n_s(h), h): h \geq 0 \}, &&  n_s(h) &:= \frac{K_s}{r(h)}\,,  \label{eq:rre-nc-s} \\
\intertext{and}
    \mathcal{N}_h = \{ (n_h(h), h): h \geq 0 \}, &&  n_h(h) &:=  \frac{\alpha K_h(1-h)}{r(h)}  \,. \label{eq:rre-nc-h} 
\end{align}
\end{subequations}
The unique equilibrium $F_*=(s_*,h_*) \in \mathbb{R}^2_{>0}$ solves $n_h(h_*)=n_s(h_*)=:s_*$ for
\begin{equation}
  s_* = \frac{K_s}{r(h_*)} > 0, \qquad h_* = 1 - \frac{K_s}{\alpha K_h} > 0\,. \label{eq:fixp}
\end{equation}
For the parameter values in \cref{table:reduced-parameters}, used to generate data in \cref{fig:limit-cycle-start}, we obtain
$F_*=(s_*,h_*)\approx (0.0762, 0.0906)$ or $(\ce{pS}_*,\ce{pH}_*)\approx (4.54, 4.93)$.

\begin{remark}\label{rem:aK}
    In the following, we require that $\alpha K_h > K_s$, so that $F_*$ exists in the positive quadrant, in particular, the concentration $h_*$ is positive.
    We note that this condition on the parameters aligns with a general claim made in a previous study for the existence of a limit cycle---namely, differential transport of substrate and hydrogen across the membrane \cite{bansagi:JPCB2014}, which will be discussed further in \cref{sec:conclusion}.
    The ratio $K_h/K_s$ quantifies the difference in the transport of the hydrogen ions and substrate across the membrane, and $(2\alpha)^{-1}=[\ce{S}_\textrm{ext}]/[\ce{H+}_\textrm{ext}]$ represents the ratio of their external concentrations, see \cref{sec:realistic-model}.
\end{remark}

\begin{figure}
    \includegraphics[width=0.47\textwidth]{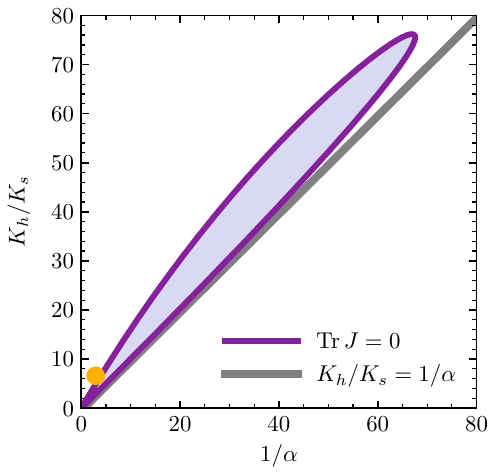} 
    \caption{
    Numerical results for the domain of oscillation in the parameter combinations $(K_h/K_s, \alpha^{-1})$ at fixed values of $\beta$, $\epsilon_1$, and $\epsilon_2$ (\cref{table:reduced-parameters}). 
    The curve $\mathrm{Tr}\, J = 0$ (violet line) separates the regions of stable (attractive) and unstable (repelling) behavior of the fixed point. 
    Oscillations occur in the blue-shaded region, where the fixed point is a repelling node or focus, $\mathrm{Tr}\, J > 0$. 
    The yellow disc marks the parameter values of the real-world system\cite{bansagi:JPCB2014,straube:JPCB2023} (see \cref{table:parameters,table:reduced-parameters}), for which the limit cycle is shown in \cref{fig:limit-cycle-start}. A lower bound for oscillations is given by the straight line, $K_h/K_s = \alpha^{-1}$, see \cref{eq:fixp}.
    }
    \label{fig:stability}
\end{figure}

The stability of the equilibrium point is determined by
the Jacobian matrix of \cref{eq:rre} at $F_*$, which we denote as 
$J=\partial (f,g)/\partial (s,h)\vert_{F_*}$. The trace $\mathrm{Tr}\, J$ exhibits a sign change, suggesting that the system admits a \textit{Hopf bifurcation} \cite{kuznetsov:book2004}.
\Cref{fig:stability} depicts the numerically determined domain of oscillations in the parameter plane $(K_h/K_s, 1/\alpha)$; outside this domain, the orbits of the system converge to the unique steady state, provided $\alpha K_h>K_s$.
The domain of oscillations in the parameter space is bounded from below by the condition $K_h/K_s = \alpha^{-1}$. 

\begin{figure}
    \includegraphics[width=0.55\textwidth]{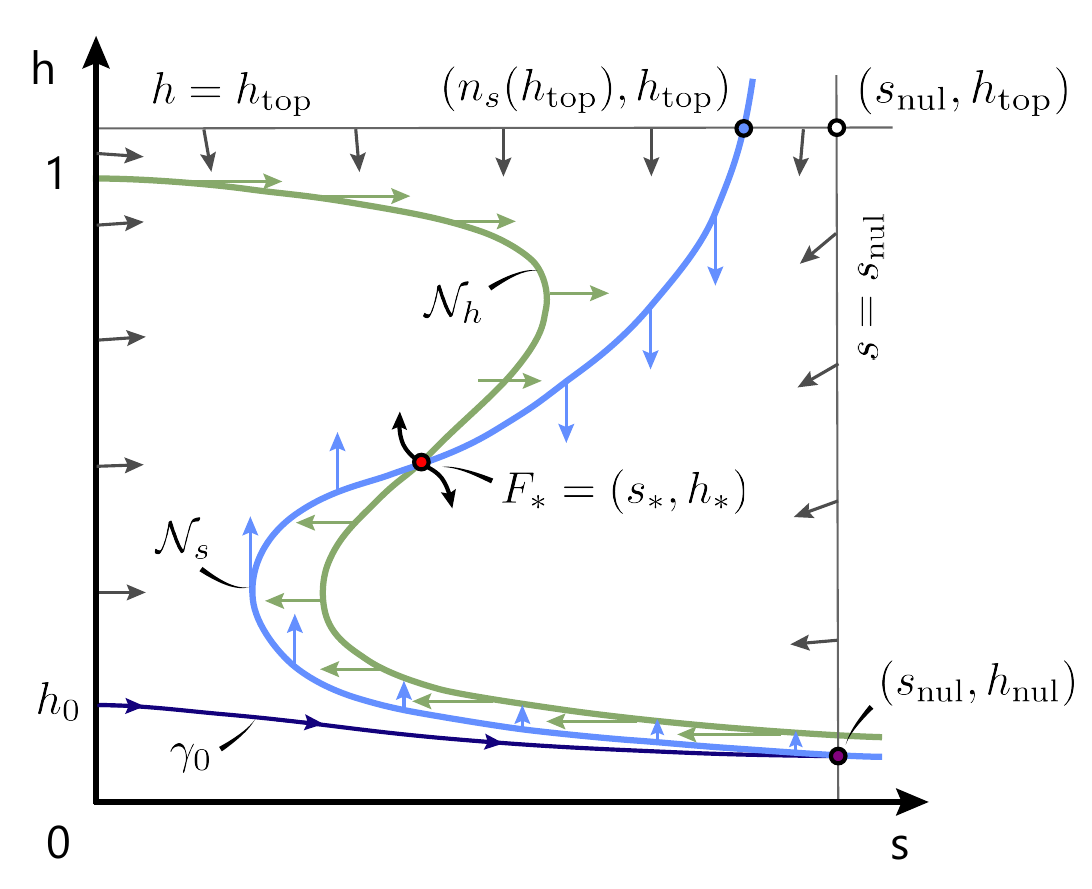} 
    \caption{
    A schematic supporting the proof of \cref{PROP:existence_cycle}. The $s$- and $h$-nullclines, $\mathcal{N}_s$ and $\mathcal{N}_h$, are shown as light blue and green lines, respectively; arrows indicate the direction of the flow field at the corresponding nullclines. The top, $h=h_\textup{top}$, and right, $s=s_\textup{nul}$, border of the compact region are given by the horizontal and vertical straight lines, respectively. The bottom border is given by the semiorbit $\gamma_0$ starting at $(0,h_0)$ and ending at $(s_\textup{nul}, h_\textup{nul})$ and is plotted in dark blue. Other characteristic intersection points, including the fixed point $F_*=(s_*,h_*)$, are shown as colored circle markers.  
    }
    \label{fig:exist}
\end{figure}

By means of standard phase-plane analysis, the existence of an attracting limit cycle of \eqref{eq:rre} is guaranteed as long as the equilibrium $F_*$ is unstable, as stated in the following proposition:

\begin{proposition}
    \label{PROP:existence_cycle}
Consider system~\eqref{eq:rre} with $\alpha K_h>K_s$ such that  $F_*$ given by \cref{eq:fixp} is the equilibrium. If the parameter values are such that $\textup{Tr}\, J\vert_{F_*}$ and  $\textup{Det}\, J\vert_{F_*}$ are positive, then system~\eqref{eq:rre} admits an attracting limit cycle contained in the positive quadrant $\mathbb{R}^2_{\geq 0} := \{(s,h):s,\,h\geq 0\}$. 
\end{proposition}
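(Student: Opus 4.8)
The plan is to invoke the Poincaré–Bendixson theorem on a compact, positively invariant annular region that traps trajectories but excludes the repelling equilibrium $F_*$. First I would translate the hypotheses $\textup{Tr}\,J|_{F_*}>0$ and $\textup{Det}\,J|_{F_*}>0$ into the statement that both eigenvalues of $J$ have strictly positive real part: the relation $\lambda_1\lambda_2=\textup{Det}\,J>0$ forces the eigenvalues to be either both real and positive or a complex-conjugate pair, and in either case $\lambda_1+\lambda_2=\textup{Tr}\,J>0$ pins their real part to be positive. Hence $F_*$ is a hyperbolic repeller, and by the Hartman--Grobman theorem (or a quadratic Lyapunov function built from the linearisation) there is a small closed disc $D\subset\mathbb{R}^2_{>0}$ centred at $F_*$ on whose boundary the flow of \cref{eq:rre} points strictly outward.

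Next I would construct the outer trapping region $\mathcal{R}$ sketched in \cref{fig:exist}, verifying that the field $(f,g)$ is nowhere outward-pointing along $\partial\mathcal{R}$. On the left edge $s=0$ one has $f(0,h)=K_s>0$, so trajectories enter the quadrant; on a high horizontal edge $h=h_\textup{top}$ with $h_\textup{top}>1$ one has $g=-q+K_h(1-h_\textup{top})<0$, since $q\geq 0$ and $1-h_\textup{top}<0$, so the flow points downward into $\mathcal{R}$; on the right edge $s=s_\textup{nul}$ the no-exit condition $f<0$ reduces to $s_\textup{nul}>n_s(h)=K_s/r(h)$, which holds once $s_\textup{nul}$ exceeds the finite maximum of $n_s$ over the compact $h$-range spanned by that edge. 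The remaining lower portion of the boundary is taken to be a forward orbit segment $\gamma_0$ launched from $(0,h_0)$ with $h_0>0$ small; being an orbit, $\gamma_0$ is a no-exit boundary by uniqueness of solutions, which holds since $(f,g)$ is smooth in the open quadrant. Since on the invariant axis $h=0$ the field reduces to $(K_s,0)$, the quadrant is forward invariant, $\gamma_0$ stays above it, and it eventually meets $s=s_\textup{nul}$ at some height $h_\textup{nul}>0$, closing off a compact region $\mathcal{R}$ that (because $h_*=1-K_s/(\alpha K_h)\in(0,1)$) contains $F_*$ in its interior.

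With $\mathcal{R}$ in hand, the region $A:=\mathcal{R}\setminus\textup{int}\,D$ is compact, positively invariant (inward flow on $\partial\mathcal{R}$, outward-from-$D$ flow on $\partial D$), and free of equilibria, since $F_*$ is the unique fixed point in $\mathbb{R}^2_{>0}$ by \cref{eq:fixp} and lies in $\textup{int}\,D$. The Poincaré--Bendixson theorem then gives that the $\omega$-limit set of every trajectory in $A$ is a periodic orbit contained in $A\subset\mathbb{R}^2_{\geq 0}$; applying this to a trajectory starting on $\partial\mathcal{R}$, which flows into $A$ and cannot reach $D$, shows that this periodic orbit attracts from the outside, so \cref{eq:rre} admits an attracting limit cycle.

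The main obstacle I anticipate is the construction of $\mathcal{R}$ rather than the invocation of Poincaré--Bendixson. Because $r(h)\to 0$ as $h\to 0$, the $s$-nullcline $n_s(h)=K_s/r(h)$ diverges there, so no rectangular box can serve as a trapping region: any vertical line $s=s_\textup{nul}$ fails the no-exit condition for $h$ close to $0$. This is precisely why the lower boundary must be the curved orbit segment $\gamma_0$, and the delicate point is to choose $s_\textup{nul}$, $h_\textup{top}$, and $h_0$ consistently so that $\gamma_0$ reaches $s=s_\textup{nul}$ at a height $h_\textup{nul}$ lying within the interval where that vertical edge is genuinely no-exit, while the closed region still encloses $F_*$. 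Verifying this mutual compatibility, together with confirming that the repelling disc $D$ fits inside $\mathcal{R}$, is the part requiring the most care.
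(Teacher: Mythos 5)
Your proposal is correct and follows essentially the same route as the paper's proof: the same trapping region bounded by $\{s=0\}$, $\{h=h_\textup{top}\}$ with $h_\textup{top}>1$, $\{s=s_\textup{nul}\}$, and an orbit segment $\gamma_0$ as lower boundary, combined with the Poincar\'e--Bendixson theorem and the repelling character of $F_*$ (your explicit excision of a small disc $D$ around $F_*$ is just a more hands-on version of citing the theorem for a region containing a single unstable node or focus). The one step you flag as delicate --- arranging for $\gamma_0$ to reach the vertical edge at a height where that edge is genuinely no-exit --- is resolved in the paper by running your construction backwards: one first fixes the corner $(s_\textup{nul},h_\textup{nul})$ on the nullcline $\mathcal{N}_s$ with $s_\textup{nul}>\max\{K_s/r(h_\textup{top}),\,s_*\}$ and then takes $\gamma_0$ to be the unique orbit through that point, traced back to its intersection $(0,h_0)$ with $\{s=0\}$, which places $h_\textup{nul}$ exactly at the sign change of $\dot s$ so that $\dot s<0$ on the whole edge $\{s=s_\textup{nul},\,h_\textup{nul}<h\leq h_\textup{top}\}$.
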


\begin{proof} 
    We construct a positively invariant compact region $\mathcal{R}$ contained in $\mathbb{R}^2_{>0}$. First, let us fix any $h_\textup{top}>1$, an arbitrarily large $s_\textup{nul}>\max\left\{K_s/r(h_\textup{top}), s_*\right\}$, and the point $(s_\textup{nul},h_\textup{nul})\in \mathcal{N}_s$. Choose the unique initial condition $(0,h_0)$ such that its forward semiorbit $\gamma_0:=\{(s(t),h(t)) : t\geq 0, \;(s(0),h(0))=(0,h_0)\}$ intersects $\mathcal{N}_s$ at $(s_\textup{nul},h_\textup{nul})$.  
    Consider the compact region $\mathcal{R}\subset \mathbb{R}^2_{>0}$ delimited by $\gamma_0$, $\{s=s_\textup{nul}\}$, $\{h=h_\textup{top}\}$, and $\{ s=0\}$, which is positively invariant for sufficiently large $s_\textup{nul}$, see \cref{fig:exist}. Indeed, the vector field given by \cref{eq:rre} points inwards on $\partial \mathcal{R}$, apart from $\gamma_0$ on which it is tangent. This follows from the fact that
    \begin{align*}
        \dot{s} & > 0, \textup{ when } s=0, \\   
        \dot{s} & <0,  \textup{ when } s=s_\textup{nul}
 \textup{ as } s_\textup{nul}> \frac{K_s}{r(h)}, \textup{ for all } h\in[h_\textup{nul},h_\textup{top}], \\ 
\dot{h} & \leq K_h(1-h) <0 \textup{ when $h=h_\textup{top}>1$}.
    \end{align*}
    Since $F_*$ is an unstable node or focus, as indicated by $\textup{Tr}\, J\vert_{F_*}>0$ and  $\textup{Det}\, J\vert_{F_*}>0$, the result follows from the Poincaré--Bendixson theorem (cf.~e.g.~Ref.~\citenum{CoddingtonLevinson55}~[Chapter 16, Theorem 2.1]).
\end{proof}

\subsection{Introduction of a single small parameter} \label{ssec:ode-timesc-sep}

We have seen that, for a suitable choice of parameter values, system \eqref{eq:rre} admits an attracting limit cycle, which displays slow and fast regimes. We will demonstrate below that the time scale separation emerges 
in a suitable limit process for the small parameters $\epsilon_1$ and $\epsilon_2$. 
One observes that $\epsilon_1^2$ and $\epsilon_2$ are of similar order of magnitude (see \cref{table:reduced-parameters}, and by the only occurrence of $\epsilon_1$ and $\epsilon_2$ through the combinations $h/\epsilon_1$ and $h^2/\epsilon_2$ in the model, \cref{eq:fun-r(h),eq:fun-q(s-h),eq:v(h)}. 
This structure suggests to use a scaling $\epsilon_1=\mathcal{O}(\epsilon)$ and $\epsilon_2=\mathcal{O}(\epsilon^2)$ as $\epsilon\rightarrow 0$ such that
$h/\epsilon_1$ and $h^2/\epsilon_2$ can remain balanced if $h = \mathcal{O}(\epsilon)$.
Concretely, we set
\begin{equation}
  \epsilon_1 =: C \epsilon\,  \quad \text{and} \quad \epsilon_2 =: A^{-1} \epsilon^{2}\,, \label{eq:resc-constants}
\end{equation}
with the constant $C \approx 0.77$ chosen for convenience such that $\epsilon = \num{e-3}$ for the parameters of the real-world system in \cref{table:reduced-parameters}; it follows that $A \approx 1.4$. Upon substituting in \cref{eq:rre}, the system reads
\begin{subequations} \label{eq:rre-eps}
    \begin{align}
    \frac{ds}{dt} & =  -r_\epsilon(h)  s + K_s\,,  \label{eq:rre-eps-s} \\
    \frac{d h}{dt} & =  - q_\epsilon(s,h) + K_h(1-h)\,, \label{eq:rre-eps-h}
    \end{align}
\end{subequations}
with the functions
\begin{subequations}
    \label{eq:r-q-eps}
    \begin{align} 
        r_\epsilon(h) & := \frac{1}{\beta C (h/\epsilon)^{-1} + 1 + \beta C^{-1} (h/\epsilon)},  \label{eq:r-eps} \\
        q_\epsilon(s, h)  & :=  \frac{1}{2}\sqrt{v_\epsilon(h)^2+ 4 A K r_{\epsilon}(h)(h/\epsilon)^2 s}- \frac{1}{2} v_\epsilon(h), \label{eq:q-eps} 
    \end{align}
\end{subequations}
where $v_\epsilon(h)$ is the quadratic function
\begin{align}
    v_\epsilon(h) & := \alpha A K (h/\epsilon)^2 -K_h(1-h) \,, \label{eq:v-eps}
\end{align}
as induced by $v(h)$ defined in \cref{eq:v(h)}.
The equilibrium of \cref{eq:rre-eps} is calculated analogously to \cref{eq:fixp} and is located at
$F_{*,\epsilon} := (s_{*,\epsilon}, h_{*,\epsilon}) = (K_s/r_\epsilon(h_*), h_*)$; in particular, $h_{*,\epsilon}=h_*$ is independent of $\epsilon$, but $s_{*,\epsilon} = O(\epsilon^{-1})$ diverges as $\epsilon \to 0$.

\section{Fast-slow analysis of the system} \label{sec:analysis}

For the fast-slow analysis of the pH oscillator, described by the dynamical system \eqref{eq:rre-eps}, we employ GSPT techniques \cite{kuehn:book2015, wechselberger:book2020} to obtain and characterize the critical manifold, which pertains to the singular limit ($\epsilon = 0$) and reveals the geometric structure underlying the limit cycle for $\epsilon$ small enough.
\Cref{fig:limit-cycle-start} illustrates the limit cycle for the original system, which, after rescaling of constants, corresponds to $\epsilon = \num{e-3}$;
decreasing $\epsilon$ further enhances the timescale separation.
Numerical integration of \cref{eq:rre-eps} shows that, while the limit cycle deforms, it remains bounded in the $h$-variable, but progressively extends in the $s$-variable. However, when represented in terms of the rescaled variable $\sigma = \epsilon s$, see \cref{fig:scheme}(a), the limit cycle becomes bounded and appears to converge to a specific shape as $\epsilon$ is decreased. This observation motivates us, in \cref{sec:acid}, to perform the fast-slow analysis in the variables $(\sigma, h)$. It reveals the slow and fast motions in the acidic domain ($h > h_\textup{max}$), including the fold point $F_\textup{A}$, but it cannot resolve the fold point $F_\textup{B}$ approached from the basic domain ($h < h_\textup{max}$), which is close to the $h=0$ line.
We address this issue in \cref{sec:neutral} by introducing another rescaling, $(s, \eta)$ with $\eta= h/\epsilon$, which can capture the dynamics near $F_\textup{B}$, but not in the acidic regime.

\begin{figure}
    \includegraphics[width=0.9\textwidth]{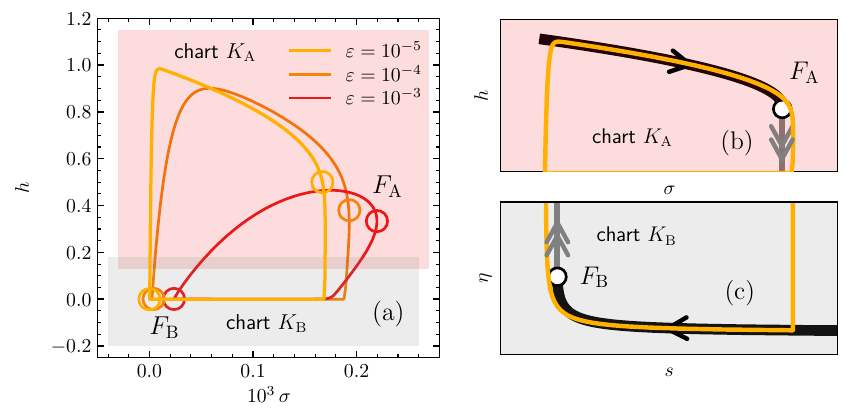}
    \caption{
    (a): Numerical solutions of \cref{eq:rre-eps} for $\epsilon = \num{e-5}, \num{e-4}, \num{e-3}$ showing the limit cycle trajectories and their fold points in the variables ($\sigma, h$) with $\sigma = \epsilon s$. Shaded regions depict the mutual arrangement of the charts $K_\textup{A}$ and $K_\textup{B}$ associated with the two rescalings, as described in the text.
    (b) and (c): Charts $K_\textup{A}$ in the variables $(\sigma, h)$ and $K_\textup{B}$ in the variables $(s, \eta)$, showing the transition from their slow manifolds (solid black lines) to fast fibers (solid gray lines) at the fold points $F_\textup{A}$ and $F_\textup{B}$ (open circles) for $\epsilon=0$, respectively; the variables are connected through $\sigma =\epsilon s$ and $h = \epsilon \eta$. The yellow solid line is an $\epsilon$-perturbed trajectory ($\epsilon=\num{e-5}$), which illustrates how  switching between the charts leads to a closed orbit.
    }
    \label{fig:scheme}
\end{figure}

The two rescalings provide different perspectives on the limit cycle, corresponding to two charts $K_\textup{A}$ and $K_\textup{B}$, which overlap and can be matched, as depicted in \cref{fig:scheme}.
In both charts, we first focus on the singular limit $\epsilon=0$ and identify the critical manifolds, which are \textit{normally hyperbolic} away from the points $F_\textup{A}$ and $F_\textup{B}$. 
These invariant manifolds persist as invariant slow manifolds for $\epsilon$ small enough, and can be extended beyond the generic fold points $F_\textup{A}$ and $F_\textup{B}$ using techniques of GSPT generalized for degenerate equilibria\cite{krupa:SIAM-JMA2001,kuehn:book2015}.
It implies that a trajectory moving rightwards along the slow manifold in chart $K_\textup{A}$ (thick black solid line in \cref{fig:scheme}(b)) passes the fold point $F_\textup{A}$ and transitions to a fast fiber pointing downwards (vertical solid gray line). Similarly, in chart $K_\textup{B}$, a trajectory moving leftwards along the slow manifold passes the fold point $F_\textup{B}$ and quickly moves vertically upwards, see \cref{fig:scheme}(c).
A coordinate transformation matches the dynamics in charts $K_\textup{A}$ and $K_\textup{B}$, as discussed in \cref{sec:global-picture}, ultimately resolving the mixed-mode dynamics of the limit cycle.

\subsection{The fold point \texorpdfstring{$\bm{F}_{\mathbf{A}}$}{FA} approached from the acidic domain, at low \ce{pH}} \label{sec:acid}

For investigating the turning point ${F}_\textup{A}$ located in the acidic domain, at relatively large values of $h$ (low pH level), we use the rescaled variables
\begin{align}
    \sigma = {\epsilon} s\,, \quad \tau:=\epsilon t\,, \label{eq:s-resc-a}
\end{align}
where $\sigma$ is a small variable and $\tau$ is a slow time relative to the original variables $s$ and $t$, respectively. 
With this ansatz, the dynamical system \eqref{eq:rre-eps} becomes
the fast-slow system
\begin{subequations} \label{eq:rre-fs-a}
    \begin{align}
	\frac{d\sigma }{d \tau} & = \tilde{f}_\epsilon(\sigma, h) := - \tilde{r}_\epsilon(h) \sigma + K_s \,,  \label{eq:rre-fs-a-s} \\
	\epsilon \frac{d h}{d \tau} & = \tilde{g}_\epsilon(\sigma, h) := - \tilde{q}_\epsilon(\sigma, h) + K_h (1-h)
        \,,\label{eq:rre-fs-a-h}
    \end{align}
\end{subequations}
with $\tilde{r}_\epsilon(h) := \epsilon^{-1} r_\epsilon(h)$ and 
$\tilde{q}_\epsilon(\sigma, h):={q}_\epsilon(\sigma/\epsilon, h)$.
Inserting the definitions of $r_\epsilon$ and $q_\epsilon$ from \cref{eq:r-eps,eq:q-eps}, we obtain
\begin{equation}
  \tilde{r}_\epsilon(h) = \frac{1}{\epsilon^2 \beta C/h +\epsilon + (\beta/C) h} \label{eq:tilde_r-eps}
\end{equation}
and
\begin{equation}
\tilde{q}_\epsilon(\sigma,h) = 
          -\frac{1}{2 \epsilon^2} \left[\sqrt{\tilde{v}_\epsilon(h)^2 + 4 \epsilon^2 A K \tilde{r}_\epsilon(h) h^2 \sigma} - \tilde{v}_\epsilon(h) \right],
          \label{eq:q-eps-simple}
\end{equation}
where 
\begin{equation}
    \tilde{v}_\epsilon(h) := \epsilon^2 v_\epsilon(h)= \alpha A K h^2 -\epsilon^2 K_h(1-h) \label{eq:tilde:_v-eps}
\end{equation}
for $v_\epsilon(h)$ given in \cref{eq:v-eps}.
Switching back to the timescale $t$, 
we rewrite the fast-slow system \eqref{eq:rre-fs-a} as
\begin{subequations} \label{eq:rre-f-a}
    \begin{align}
	\frac{d \sigma}{d t } & = \epsilon \tilde{f}_\epsilon(\sigma, h)\,,  \label{eq:rre-f-a-s} \\
	\frac{d h}{d t} & = \tilde{g}_\epsilon(\sigma, h)\,.\label{eq:rre-f-a-h}
    \end{align}
\end{subequations}

\subsubsection{Critical manifold and singular dynamics} \label{ssec:acid-eps0}

The \textit{layer problem} describes the fast, one-dimensional dynamics of $h>0$ for fixed $\sigma$, which acts as a parameter.
It is obtained from the fast system \cref{eq:rre-f-a} by setting $\epsilon=0$:
\begin{subequations} \label{eq:rre-lay-a}
    \begin{align}
        \frac{d \sigma}{d t } & = 0\,,  \label{eq:rre-lay-a-s} \\
        \frac{d h}{d t } & = \tilde{g}_0(\sigma, h)\,. \label{eq:rre-lay-a-h}
    \end{align}
\end{subequations}
However, $\tilde{g}_0(\sigma, h)$ is initially undefined, as $\tilde{q}_\epsilon(\sigma, h)$ becomes singular at $\epsilon = 0$, see \cref{eq:q-eps-simple}.
A continuous extension $\tilde{q}_0(\sigma, h) := \lim_{\epsilon \to 0}\tilde{q}_\epsilon(\sigma, h)$ is obtained by distinguishing the sign of $v_\epsilon(h)$, which allows us to rewrite $\tilde{q}_\epsilon(\sigma, h)$ as 
\begin{align}
    \tilde{q}_\epsilon(\sigma,h) & = \left\{ \begin{array}{ll}
          -\dfrac{\tilde{v}_\epsilon(h)}{2 \epsilon^2} \left[1 +\sqrt{1 + \epsilon^2 4 A K \tilde{r}_\epsilon(h) h^2 \sigma / \tilde{v}_\epsilon(h)^2} \right], &  \; \text{if } 0 < h < h_\epsilon^+ \,, \smallskip\\
          \epsilon^{-1}\sqrt{ A K \tilde{r}_\epsilon(h) h^2 \sigma} , &  \; \text{if } h = h_\epsilon^+ \,, \smallskip\\
           \dfrac{ 2 A K \tilde{r}_\epsilon(h) h^2 \sigma}{\tilde{v}_\epsilon(h)}\left[ 1 + \sqrt{1 + \epsilon^2 4 A K \tilde{r}_\epsilon(h) h^2 \sigma / \tilde{v}_\epsilon(h)^2} \right]^{-1}, &  \; \text{if } h > h_\epsilon^+ \,, 
    \end{array}
    \right.  \label{eq:q-case-eps-a}
\end{align}
in terms of the positive root $h_\epsilon^+$ of the equation $v_\epsilon(h)=0$:
\begin{align}
    h_\epsilon^+ = \epsilon \, \frac{-\epsilon K_h+\sqrt{ (\epsilon K_h)^2 + 4 \alpha A K K_h}}{2 \alpha A K} \,.\label{eq:h-plus-eps}
\end{align}
Thus, $h_\epsilon^+\rightarrow 0$ as $\epsilon\rightarrow 0$, and for all $h>0$ the following holds:
\begin{equation}
    \tilde{q}_0(\sigma,h) = \frac{A K \tilde{r}_0(h) h^2 \sigma}{\tilde{v}_0(h)}
                    = \frac{C \sigma}{\alpha \beta h},  \label{eq:q0-a}  
\end{equation}
upon inserting $\tilde{r}_0(h)= C/(\beta h)$ and $\tilde{v}_0(h)=\alpha A K h^2$, see \cref{eq:tilde_r-eps,eq:tilde:_v-eps}.

The so-called \textit{reduced problem} is obtained from \cref{eq:rre-fs-a} by setting $\epsilon=0$; it has the form of a differential-algebraic equation:
\begin{subequations} \label{eq:rre-red-a}
    \begin{align}
	\frac{d \sigma}{d \tau} & = \tilde{f}_0(\sigma, h) :=  -\frac{C \sigma}{\beta h} +K_s  \,,  \label{eq:rre-red-a-s} \\
	0 & = \tilde{g}_0(\sigma, h) :=  - \frac{C  \sigma}{\alpha \beta h}  + K_h(1-h)   \,.\label{eq:rre-red-a-h}
    \end{align}
\end{subequations}
The \textit{critical manifold} consists of the equilibria of the layer problem, \cref{eq:rre-lay-a-h}, and is determined by the zeros of $\tilde g_0$, given by
\begin{equation}
    \tilde{\mathcal{S}}_\textup{A} = \bigl\{ (\varphi(h), h): \, h > 0 \bigr\}
    \quad \text{with} \quad \varphi(h) := \frac{\alpha \beta K_h}{C}\, h (1-h) \,.  \label{eq:crit-man-a}
\end{equation}
The function $\varphi(h)$ possesses only one extremum at $h=1/2$, which corresponds to the acidic fold point
\begin{align}
    F_\textup{A} := (\sigma_\textup{A}, h_\textup{A}) = (\varphi(h_\textup{A}),h_\textup{A}) = \left( \frac{\alpha \beta K_h }{4 C}, \frac{1}{2} \right)\,. \label{eq:foldpoint-Fa}
\end{align}

\begin{figure}
    \includegraphics[width=0.9\textwidth]{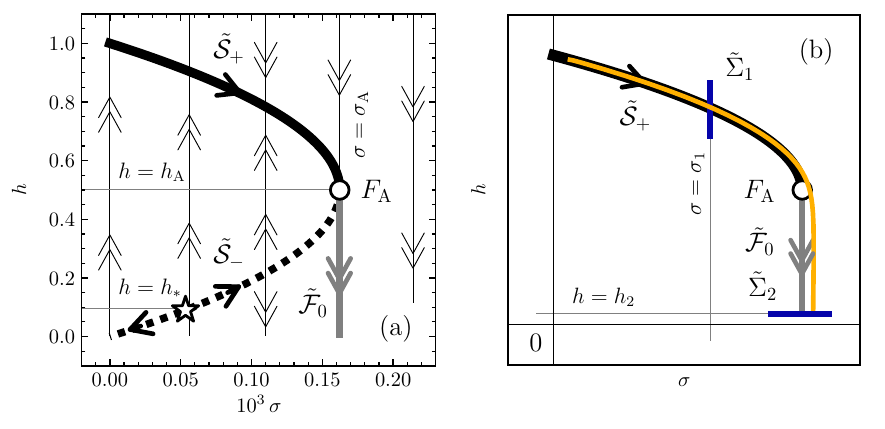}
    \caption{
    (a) The critical manifold $\tilde{\mathcal{S}}_\textup{A}=\tilde{\mathcal{S}}_+ \cup \{ F_\textup{A} \} \cup \tilde{\mathcal{S}}_-$ (thick black lines)  consists of the fold point $F_\textup{A}$ (open circle, \cref{eq:foldpoint-Fa}), the attractive branch $\tilde{\mathcal{S}}_+$ (solid line) and the repelling branch $\tilde{\mathcal{S}}_-$ (dotted line), see \cref{eq:crit-man-a}. The slow motion along $\tilde{\mathcal{S}}_+$ is governed by \cref{eq:slow-a} and progresses towards the generic fold point $F_\textup{A}$, where a loss of hyperbolicity triggers a transition to fast motion along the fast fiber. The segment of the fast fiber extending downward from $F_\textup{A}$, denoted as $\tilde{\mathcal{F}}_0$ (thick gray line), is given by \cref{eq:rre-lay-a-h} for $\sigma=\sigma_\textup{A}$. The thin horizontal gray lines indicate $h=h_\textup{A}$ and $h=h_*$; the star indicates the fixed point of system \eqref{eq:rre-red-a}.
    (b) Schematic of the $\epsilon$-perturbed extension (solid yellow line) of the critical manifold from panel~(a), following $\tilde{\mathcal{S}}_{+,\epsilon}$ and passing near the fold point $F_\textup{A}$. The Poincar{\'e} sections $\tilde{\Sigma}_1$ and $\tilde{\Sigma}_2$ are shown as dark blue lines. The thin gray lines are for $\sigma=\sigma_1$ and $h=h_2$, respectively; the remaining symbols are as in (a).
    }
    \label{fig:scaling-a}
\end{figure}

The stability of the manifold $\tilde{\mathcal{S}}_\textup{A}$
is determined by the sign of the derivative $\partial_{h} \tilde{g}_0(\sigma, h)$ on $\tilde{\mathcal{S}}_A$:
\begin{align}
    \partial_{h} \tilde{g}_0(\sigma, h)\Big|_{(\sigma, h) \in \tilde{\mathcal{S}}_\textup{A}} & = \partial_{h} \left( - \frac{C \sigma}{\alpha \beta h} + K_h(1-h)\right) \Bigg|_{(\sigma, h) \in \tilde{\mathcal{S}}_\textup{A}} \notag \\
        & = - 2 K_h \frac{h - h_\textup{A}}{h} \,. \label{eq:crit-man-a-stab}
\end{align}
In particular, $\partial_{h} \tilde{g}_0$ changes sign at $F_\textup{A}$. The critical manifold $\tilde{\mathcal{S}}_\textup{A}$ contains the sets $\tilde{\mathcal{S}}_+$ and $\tilde{\mathcal{S}}_-$, where $\tilde{\mathcal{S}}_+$ represents the attractive upper branch for $h > h_\textup{A}$, characterized by $\partial_{h} \tilde{g}_0(\sigma, h) < 0$, and $\tilde{\mathcal{S}}_-$ represents the repelling lower branch for $h < h_\textup{A}$, characterized by $\partial_{h} \tilde{g}_0(\sigma, h) > 0$, see \cref{fig:scaling-a}(a). The manifold is therefore normally hyperbolic everywhere except at the fold point $F_\textup{A}$.

The slow dynamics along the critical manifold 
$\tilde{\mathcal{S}}_\textup{A}$
is given by the reduced problem, \cref{eq:rre-red-a}. Substituting $\phi(h)$ via \cref{eq:crit-man-a} in \cref{eq:rre-red-a-s} yields:
\begin{equation}
\frac{d \sigma}{d \tau}\bigg|_{(\sigma,h) \in \tilde{\mathcal{S}}_\textup{A}}  =  \left(K_s-\frac{C\sigma}{\beta h}\right)\bigg|_{\sigma=\phi(h)} 
    = \alpha K_h(h-h_*) \,,
\end{equation}
where $h_*$ is given by \cref{eq:fixp}; in \cref{fig:scaling-a}(a), the fixed point $(K_s/\tilde{r}_0(h_*), h_*)$ of \cref{eq:rre-red-a} is shown as a star. Using $d \sigma/d \tau =d \varphi(h)/d \tau =\varphi'(h) \, d h/d \tau$ and the derivative $\varphi'(h)=(\alpha K_h \beta/C) (1-h/h_\textup{A})$, yields
\begin{align}
    \left.\frac{d h}{ d \tau}\right|_{(\sigma,h) \in \tilde{\mathcal{S}}_\textup{A}} 
    &=  \left.\frac{d \sigma}{ d \tau} \, \frac{1}{\phi'(h)} \right|_{(\sigma,h) \in \tilde{\mathcal{S}}_\textup{A}} = \left(h - 1 + \frac{K_s}{\alpha K_h} \right) \, \frac{C/\beta}{1-h/h_\textup{A}} \notag \\
    &= -\frac{C h_\textup{A}(h-h_*)}{\beta (h - h_\textup{A})} \,, \label{eq:slow-a}
\end{align}
Both, $d\sigma/d\tau$ and $dh/d\tau$ flip sign at $h=h_*$.
Restricting to $h > h_*$ and since $h_\textup{A} > h_*$, the sign of the right-hand side of \cref{eq:slow-a} is determined by the difference $h-h_\textup{A}$ and changes at $F_\textup{A}$.
For $h > h_\textup{A}$, the motion along the upper branch of the critical manifold occurs with $\diff h/ \diff \tau < 0$ from smaller to larger values of $\sigma$, whereas $\diff h/ \diff \tau > 0$ for motion on the lower branch, $h_*< h < h_\textup{A}$, also see \cref{fig:scaling-a}(a). These observations indicate that, in both cases, the motion along the attractive branch $\tilde{\mathcal{S}}_+$ and the repelling branch $\tilde{\mathcal{S}}_-$ for any $\sigma < \sigma_\textup{A}$ is always towards the non-hyperbolic point $F_\textup{A}$ (in its vicinity).

\begin{proposition} \label{prop:generic_fold_A}
    The critical manifold $\tilde{\mathcal{S}}_\textup{A}$ possesses a single fold point, given by $F_\textup{A}$ in \cref{eq:foldpoint-Fa}, and this point is a generic fold; in particular, it is nondegenerate and the flow at $F_\textup{A}$ is transversal to $\tilde{\mathcal{S}}_\textup{A}$ (see \cref{rem:gen-fold}).
\end{proposition}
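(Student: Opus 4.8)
The plan is to verify directly the standard genericity conditions for a fold point in a planar fast--slow system, as formulated by Krupa and Szmolyan \cite{krupa:SIAM-JMA2001} (see \cref{rem:gen-fold}). The one point to keep straight is that in system \eqref{eq:rre-f-a} the fast variable is $h$ and the slow variable is $\sigma$, so the layer dynamics is governed by $\tilde g_0$ and all genericity conditions must be phrased in terms of its partial derivatives at $F_\textup{A}$, \emph{not} those of the slow drift $\tilde f_0$. Concretely I would check: (i) $\tilde g_0(F_\textup{A})=0$ and $\partial_h \tilde g_0(F_\textup{A})=0$ (the fold lies on the critical manifold and normal hyperbolicity is lost there); (ii) $\partial_h^2 \tilde g_0(F_\textup{A})\neq 0$ (nondegeneracy, so the fold is quadratic); and (iii) the transversality conditions $\partial_\sigma \tilde g_0(F_\textup{A})\neq 0$ together with $\tilde f_0(F_\textup{A})\neq 0$, the latter ensuring that the reduced flow crosses the fold rather than stagnating on it.

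For the uniqueness claim I would argue from the explicit graph $\sigma=\varphi(h)$ of the critical manifold in \cref{eq:crit-man-a}: fold points are precisely the critical points of $\varphi$, and since $\varphi'(h)=(\alpha\beta K_h/C)(1-2h)$ vanishes only at $h=1/2=h_\textup{A}$, the fold $F_\textup{A}$ is unique. Equivalently, differentiating the identity $\tilde g_0(\varphi(h),h)\equiv 0$ gives $\partial_h \tilde g_0 = -\varphi'(h)\,\partial_\sigma\tilde g_0$; because $\partial_\sigma\tilde g_0=-C/(\alpha\beta h)\neq 0$ for $h>0$, the loss of hyperbolicity $\partial_h\tilde g_0=0$ coincides exactly with $\varphi'(h)=0$, which both reproduces \cref{eq:crit-man-a-stab} and pins down the single fold at $h=h_\textup{A}$. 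Conditions (i) then follow immediately, the first by construction and the second from \cref{eq:crit-man-a-stab} evaluated at $h=h_\textup{A}$.

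The nondegeneracy and the first transversality condition are short computations from $\tilde g_0(\sigma,h)=-C\sigma/(\alpha\beta h)+K_h(1-h)$: I expect $\partial_h^2\tilde g_0(F_\textup{A})=-4K_h\neq 0$ and $\partial_\sigma\tilde g_0(F_\textup{A})=-2C/(\alpha\beta)\neq 0$, both nonzero because every constant is strictly positive. For the flow condition I would evaluate the reduced drift $\tilde f_0$ from \cref{eq:rre-red-a-s} at the fold and simplify using the fixed-point relation $K_s=\alpha K_h(1-h_*)$ implicit in \cref{eq:fixp}, obtaining $\tilde f_0(F_\textup{A})=\alpha K_h(h_\textup{A}-h_*)$. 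Since the tangent to $\tilde{\mathcal{S}}_\textup{A}$ at $F_\textup{A}$ is vertical (as $\varphi'(h_\textup{A})=0$) while this drift is a nonzero \emph{horizontal} component, the reduced flow is transversal to $\tilde{\mathcal{S}}_\textup{A}$ at $F_\textup{A}$.

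The only condition not forced by the algebraic structure alone---and hence the main point requiring care---is the nonvanishing of $\tilde f_0(F_\textup{A})=\alpha K_h(h_\textup{A}-h_*)$, which fails precisely when the equilibrium coincides with the fold, $h_*=h_\textup{A}=1/2$. I would close this by invoking the oscillatory parameter regime already in force, in which $h_*<h_\textup{A}$ (the repelling equilibrium sits strictly below the fold, on the branch $\tilde{\mathcal{S}}_-$, as used around \cref{eq:slow-a}), so that $\tilde f_0(F_\textup{A})>0$ and transversality holds. This is a genuine restriction on parameters rather than a computation, and it is the single hypothesis I would flag explicitly as the load-bearing one.
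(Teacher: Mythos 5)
Your proposal is correct and takes essentially the same route as the paper's proof: verify the fold conditions $\tilde g_0(F_\textup{A})=\partial_h\tilde g_0(F_\textup{A})=0$ with uniqueness read off from the explicit parametrization, then compute $\partial_h^2\tilde g_0(F_\textup{A})=-4K_h$, $\partial_\sigma\tilde g_0(F_\textup{A})=-2C/(\alpha\beta)$ for nondegeneracy, and $\tilde f_0(F_\textup{A})=\alpha K_h(h_\textup{A}-h_*)$ for transversality. Your one refinement---explicitly flagging that transversality (with the sign claimed) genuinely requires the parameter condition $h_*<h_\textup{A}=1/2$, i.e.\ $\alpha K_h<2K_s$, rather than being automatic---is in fact more careful than the paper, whose proof justifies $\tilde f_0(F_\textup{A})>0$ only by citing $h_*>0$ from \cref{eq:fixp}.
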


\begin{proof}
We have already observed that
\begin{equation}
    \tilde{g}_0(F_\textup{A}) = 0 \quad \text{and} \quad \partial_{h}\tilde{g}_0(F_\textup{A})=0,
\end{equation}
indicating that $F_\textup{A}$ is a fold point;
it is also the only such point on $\tilde{\mathcal{S}}_\textup{A}$, see \cref{eq:rre-red-a-h}.

It remains to verify the nondegeneracy and transversality conditions on $F_\textup{A}$.
Indeed, we find from \cref{eq:rre-red-a,eq:crit-man-a-stab} that 
\begin{equation}
\label{eq:nondegen1}
        \partial_{h}^2 \tilde{g}_0(F_\textup{A}) 
        = - 4 K_h < 0, \qquad \partial_{\sigma} \tilde{g}_0(F_\textup{A})
        = - \frac{2 C}{\alpha \beta} < 0, 
\end{equation}
and 
\begin{equation}
\label{eq:nondegen2}
    \tilde{f}_0(F_\textup{A})=\alpha K_h(h_\textup{A}-h_*)>0,
\end{equation}
using $h_* > 0$ from \cref{eq:fixp}.
The conditions in \cref{eq:nondegen1} establish the nondegeneracy of the fold point. Together with the transversality condition, \cref{eq:nondegen2}, this confirms that $F_\textup{A}$ is a generic fold point.
\end{proof}

\subsubsection{Extension of slow manifolds through the fold point}
\label{ssec:acid-transit-FA}

Recall that the stable branch $\tilde{\mathcal{S}}_+$ of the critical manifold is normally hyperbolic for all $h > h_\textup{A}$.  
By the smoothness of the system, Fenichel’s theorem guarantees that for any small $\gamma,\delta>0$, the compact segment
\[ \{ (\varphi(h), h): h \in [h_\textup{A} + \gamma, 1- \delta] \}  \subset \tilde{\mathcal{S}}_+  \]
gives rise to a nearby \textit{slow manifold} $\tilde{\mathcal{S}}_{+,\epsilon}$ for sufficiently small $\epsilon>0$, lying within an $\mathcal{O}(\epsilon)$-neighborhood of the critical manifold. Although uniqueness is not guaranteed, any such slow manifold can be written, up to first-order approximation, as the graph of a function $\varphi_\epsilon(h)= \varphi(h)+ \mathcal{O}(\epsilon)$.

Standard GSPT ensures that any slow manifold can be extended beyond the fold point $F_\textup{A}$, provided it is nondegenerate. 
Such an extension is illustrated in \cref{fig:scaling-a}(b) for the dynamical system \eqref{eq:rre-f-a}.
Specifically, we consider the Poincaré sections 
\begin{equation} \label{def:tilde_Sigma}
\tilde{\Sigma}_1:= \{ (\sigma, h) : \sigma=\sigma_1,\; h\in \tilde{I} \}, \qquad 
    \tilde{\Sigma}_2:= \{ (\sigma, h) : |\sigma-\sigma_\textup{A}|\leq \delta,\; h=h_2\}
\end{equation}
for some $0< \sigma_1 < \sigma_\textup{A}$, $0< h_2 < h_\textup{A}$, and an appropriate compact interval $\tilde{I}$, such that $\tilde{\mathcal{S}}_+ \cap \tilde{\Sigma}_1$ is not empty. The extension of $\tilde{\mathcal{S}}_{+,\epsilon}$ beyond the fold point follows from the next proposition, which implies that, for any $\epsilon > 0$ sufficiently small, also the intersection $\tilde{\mathcal{S}}_{+,\epsilon} \cap \tilde{\Sigma}_1$ is nonempty and given by a point $(\sigma_1,\varphi(\sigma_1)) + \mathcal{O}(\epsilon)$.

\paragraph*{Transition map.}  
Given two transversal sections $\tilde{\Sigma}_1$ and $\tilde{\Sigma}_2$, the \textit{transition map} $\Pi^{(\epsilon)}_{1,2} : \tilde{\Sigma}_1 \rightarrow \tilde{\Sigma}_2$ assigns to each point $(\sigma, h) \in \tilde{\Sigma}_1$ the first intersection point of the corresponding trajectory of the system (depending on $\epsilon$) with $\tilde{\Sigma}_2$. That is, $\Pi^{(\epsilon)}_{1,2}(\sigma, h)$ gives the location where the trajectory starting at $(\sigma, h)$ reaches $\tilde{\Sigma}_2$. 
This map captures the evolution of trajectories between two phase space sections, which is central to understanding the global behavior of fast--slow systems.

Since $F_\textup{A}$ is a generic fold point (\cref{prop:generic_fold_A}), the following statement is a direct consequence of Theorem~2.1 by Krupa and Szmolyan \cite{krupa:SIAM-JMA2001}. 
In particular, the transversality condition, \cref{eq:nondegen2}, precludes the existence of a canard trajectory passing through $F_\textup{A}$.

\begin{proposition}    \label{PROP:transition_Fold_A}
For appropriate sections $\tilde{\Sigma}_1, \tilde{\Sigma}_2$ as defined above, the transition map $\Pi_{1,2}^{(\epsilon)}:\tilde{\Sigma}_1\rightarrow \tilde{\Sigma}_2$ is well-defined for sufficiently small $\epsilon>0$, and acts as an $\mathcal{O}(e^{-c_1/\epsilon})$-contraction for some $c_1>0$. Furthermore, for any $P\in \tilde{\Sigma}_1$, the image under the transition map satisfies
\begin{equation}
    \label{eq:hitting_fold}
   \Pi_{1,2}^{(\epsilon)}(P) = \left(\sigma_\textup{A}+\mathcal{O}\left( \epsilon^{2/3}\right),h_2\right).
\end{equation}
\end{proposition}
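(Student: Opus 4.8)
The plan is to recognize \cref{PROP:transition_Fold_A} as an instance of the generic-fold passage theorem of Krupa and Szmolyan~\cite{krupa:SIAM-JMA2001}, so that the proof reduces to verifying that the planar system \cref{eq:rre-f-a} satisfies the hypotheses of their Theorem~2.1 in a neighborhood of $F_\textup{A}$, and then reading off the three conclusions. First I would fix the fast/slow identification: in \cref{eq:rre-f-a} the variable $h$ evolves on the fast timescale and $\sigma$ on the slow one, so $h$ plays the role of the fast coordinate and $\sigma$ the role of the slow coordinate in the Krupa--Szmolyan normal form. After translating $F_\textup{A}$ to the origin via $x = h - h_\textup{A}$ and $y = \sigma - \sigma_\textup{A}$ (reflecting $x$ and/or $y$ if needed to match the orientation of their standard form), the fast equation $dh/dt=\tilde g_\epsilon(\sigma,h)$ supplies their $f$ and the slow equation $d\sigma/dt=\epsilon\tilde f_\epsilon(\sigma,h)$ supplies their $g$.

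The bulk of the hypotheses is already available from \cref{prop:generic_fold_A}: together with \cref{eq:nondegen1,eq:nondegen2} it establishes $\tilde g_0(F_\textup{A})=0$, $\partial_h\tilde g_0(F_\textup{A})=0$ (fold), $\partial_h^2\tilde g_0(F_\textup{A})=-4K_h\neq 0$ (quadratic nondegeneracy), $\partial_\sigma\tilde g_0(F_\textup{A})=-2C/(\alpha\beta)\neq 0$ (the fold unfolds in the slow direction), and $\tilde f_0(F_\textup{A})=\alpha K_h(h_\textup{A}-h_*)>0$ (transversality of the slow flow). These are precisely the defining conditions of a generic, non-canard fold in~\cite{krupa:SIAM-JMA2001}. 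Two further items must still be confirmed. I would check that the vector field of \cref{eq:rre-f-a} is smooth in $(\sigma,h,\epsilon)$ near $F_\textup{A}$, which holds because $h_\textup{A}=1/2$ is bounded away from the coordinate singularity at $h=0$: there $\tilde v_\epsilon(h)>0$ and the radicand in \cref{eq:q-eps-simple} stays positive, so $\tilde q_\epsilon$, and hence $\tilde g_\epsilon$, depends smoothly on all three arguments. I would also record the orientation, namely that the attracting branch $\tilde{\mathcal S}_+$ lies on the side $h>h_\textup{A}$ and that the reduced flow drives $\sigma$ toward $\sigma_\textup{A}$ (as exhibited in \cref{eq:slow-a}), so that trajectories reach the fold along the attracting slow manifold, as the theorem requires; this is also where the nonempty intersection $\tilde{\mathcal S}_{+,\epsilon}\cap\tilde\Sigma_1$ is used.

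With the hypotheses in place, the three conclusions follow directly from Theorem~2.1 of~\cite{krupa:SIAM-JMA2001}. Well-definedness of $\Pi_{1,2}^{(\epsilon)}$ for small $\epsilon$ comes from $\tilde{\mathcal S}_{+,\epsilon}$ reaching $\tilde\Sigma_1$ and its forward flow crossing $\tilde\Sigma_2$ after rounding the fold; the $\mathcal O(e^{-c_1/\epsilon})$ contraction is the Fenichel attraction toward $\tilde{\mathcal S}_{+,\epsilon}$ accumulated along the normally hyperbolic approach to $F_\textup{A}$; and the $\mathcal O(\epsilon^{2/3})$ displacement of the image in the $\sigma$-coordinate in \cref{eq:hitting_fold} is the universal fold scaling produced by the blow-up of the non-hyperbolic point, governed by a Riccati/Airy equation in the rescaled chart. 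The transversality condition \cref{eq:nondegen2} simultaneously excludes a canard through $F_\textup{A}$, consistent with the remark preceding the proposition.

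I expect the main obstacle to be bookkeeping rather than conceptual: matching the sign and orientation conventions of the translated system to the precise standard form of~\cite{krupa:SIAM-JMA2001} (including any normalization of the quadratic coefficient and the placement of $\tilde\Sigma_1,\tilde\Sigma_2$ relative to their entry and exit sections), and confirming that the required smoothness holds uniformly in $\epsilon$ near $F_\textup{A}$. The quantitative content---the $\epsilon^{2/3}$ exponent and the exponentially small contraction rate---is imported wholesale from the blow-up analysis of~\cite{krupa:SIAM-JMA2001} and need not be re-derived here.
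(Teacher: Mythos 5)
Your proposal is correct and takes essentially the same route as the paper: the paper likewise obtains \cref{PROP:transition_Fold_A} as a direct consequence of Theorem~2.1 of Krupa and Szmolyan, invoking the generic-fold conditions already established in \cref{prop:generic_fold_A} via \cref{eq:nondegen1,eq:nondegen2}, with the transversality condition ruling out a canard through $F_\textup{A}$. Your additional verifications (smoothness of the vector field near $h_\textup{A}=1/2$ away from the $h=0$ singularity, and the orientation of the reduced flow toward the fold) only make explicit what the paper leaves implicit.
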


\subsection{The fold point \texorpdfstring{$\bm{F}_{\mathbf{B}}$}{FB} approached from the basic domain, at high \ce{pH}} \label{sec:neutral}

The candidate for a second fold point, later identified as $F_\textup{B}$, corresponds to nearly neutral $\ce{pH}$ and is approached from the basic domain at $h \simeq \epsilon C$ as $\epsilon \to 0$ (cf.~\cref{eq:foldpoint-Fb} below) and, in the limit, is pushed against the $h=0$ axis (high pH level), hindering its analysis.
To better resolve the properties of this point, we transform the dynamical system \eqref{eq:rre-eps} by rescaling the $h$ variable as
\begin{align}
   \eta :=h/\epsilon\,, \label{eq:h-resc-b}
\end{align}
which effectively zooms into the vicinity of the $h=0$ line. As a result, we arrive at the \textit{fast-slow system}:
\begin{subequations} \label{eq:rre-fs-b}
    \begin{align}
	\frac{ds}{dt} & = \hat{f}_\epsilon(s, \eta) := -\hat{r}(\eta) s + K_s \,,  \label{eq:rre-fs-b-s} \\
	\epsilon \frac{d \eta}{dt} & = \hat{g}_\epsilon(s,\eta) := -\hat{q}_\epsilon(s, \eta) + K_h (1-\epsilon \eta)
        \,,\label{eq:rre-fs-b-h}
    \end{align}
\end{subequations}
with the rescaled functions $\hat{r}(\eta) := r_\epsilon(\epsilon \eta)$ and
$\hat{q}_\epsilon(s, \eta) := q_\epsilon(s,\epsilon \eta)$, see \cref{eq:r-q-eps}. 
Introducing the fast time $t' = t / \epsilon$, the fast-slow system \eqref{eq:rre-fs-b} takes the form:
\begin{subequations} \label{eq:rre-f-b}
    \begin{align}
	\frac{ds}{d t'} & = \epsilon \hat{f}_\epsilon(s,\eta)\,,  \label{eq:rre-f-b-s} \\
	\frac{d \eta }{d t'} & = \hat{g}_\epsilon(s,\eta) \,.\label{eq:rre-f-b-h}
    \end{align}
\end{subequations}
For later use, we list the functions appearing on the r.h.s.\ of \cref{eq:rre-fs-b} for $\epsilon=0$:
\begin{equation}
    \hat{r}(\eta) = \frac{1}{\beta C \eta^{-1} + 1 + (\beta/C) \eta } \,, \label{eq:r-eps-b}
\end{equation}
which does not depend on $\epsilon$, and
\begin{equation}
    \hat{q}_0(s,\eta ) =  \frac{1}{2}\sqrt{\hat{v}_0(\eta)^2 + 4 A K \hat{r}(\eta) \eta^2 s } - \frac{\hat{v}_0(\eta)}{2} \,, \label{eq:q0-b}
\end{equation}
with
\begin{equation}
    \hat{v}_0(\eta) := v_\epsilon(\epsilon \eta)|_{\epsilon=0} =
        \alpha A K \eta^2 -K_h \,. \label{eq:v0-b}
\end{equation}

\subsubsection{Critical manifold and singular dynamics} \label{ssec:neutral-eps0}

The corresponding \textit{layer problem} is a one-dimensional dynamics in the fast variable $\eta$, parametrized by constant values of $s$. It is obtained by setting $\epsilon=0$ in \eqref{eq:rre-f-b}:
\begin{subequations} \label{eq:rre-lay-b}
    \begin{align}
    \frac{ds}{d t'} & = 0\,,  \label{eq:rre-lay-b-s} \\
    \frac{d \eta}{d t'} & =\hat{g}_0(s,\eta)\, . \label{eq:rre-lay-b-h}
    \end{align}
\end{subequations}
The \textit{reduced problem} follows by setting $\epsilon = 0$ in \eqref{eq:rre-fs-b} and reads:
\begin{subequations} \label{eq:rre-red-b}
    \begin{align}
	\frac{ds}{dt} & = \hat{f}_0(s,\eta) := -\hat{r}(\eta) s + K_s \,,  \label{eq:rre-red-b-s} \\
	0 & = \hat{g}_0(s,\eta) := -\hat{q}_0(s,\eta) + K_h \,. \label{eq:rre-red-b-h}
    \end{align}
\end{subequations}
The \textit{critical manifold} is determined by the algebraic equation \eqref{eq:rre-red-b-h}:
\begin{align}
    \hat{\mathcal{M}}_\textup{B} & := \{ (s, \eta): \, \hat{g}_0(s, \eta) = 0 \} \notag \\ 
    &= \left\{(s, \eta): (\alpha K_h - \hat{r}(\eta)s)\eta^2 = 0 \right\}\,;
\end{align}
the second line follows from the equation $2 \hat q_0(s,\eta) + \hat v_0(\eta) = 2 K_h + \hat v_0(\eta)$
after substituting \cref{eq:q0-b,eq:v0-b} and taking the square.
In particular, it is the union
$\hat{\mathcal{M}}_\textup{B} = \hat{\mathcal{S}}_\textup{B} \cup \hat{\mathcal{L}}_\textup{B}$
of a curve $\hat{\mathcal{S}}_\textup{B}$ in the domain $\eta > 0$,
\begin{equation}
    \hat{\mathcal{S}}_\textup{B} = \bigl\{ (\psi(\eta), \eta) : \eta > 0 \bigr\}
    \quad \text{with} \quad \psi(\eta) := \frac{\alpha K_h}{\hat{r}(\eta)} \,,  \label{eq:crit-man-b1}
\end{equation}
and the $s$-axis, where $\eta=0$:
\begin{equation}
    \hat{\mathcal{L}}_\textup{B} = \bigl\{ (s, 0) : s \geq 0 \bigr\}\,.  \label{eq:crit-man-b2}
\end{equation}
Inspection of the derivative
\begin{equation} \label{eq:psi-deriv}
    \psi'(\eta) = \alpha K_h (\beta/C) (1-C^2/\eta^2)
\end{equation}
shows that the branch $\hat{\mathcal{S}}_\textup{B}$ has a unique fold point,
\begin{align}
    F_\textup{B}:= (s_\textup{B}, \eta_\textup{B}) = (\psi(\eta_\textup{B}),\eta_\textup{B}) = \left( \frac{\alpha K_h}{ \hat{r}(\eta_\textup{B})}, C \right), \label{eq:foldpoint-Fb}
\end{align}
where one can further evaluate $\hat{r}(\eta_\textup{B})=1/(2\beta+1)$.

The stability of the critical manifold is inferred from the derivative of $\hat{g}_0$ along $\hat{\mathcal{S}}_\textup{B}$.
Using that $\hat g_0$ is constant on $\hat{\mathcal{S}}_\textup{B}$, it holds 
\begin{equation}
 0 = d \hat g_0(s,\eta) = \partial_s \hat g_0(s,\eta) \, ds + \partial_\eta \hat g_0(s,\eta) \, d\eta \,; \quad (s,\eta) \in \hat{\mathcal{S}}_\textup{B}.
\end{equation}
Observing further that
\begin{equation}
 \partial_s \hat g_0(s,\eta) = - \frac{A K \hat r(\eta) \eta^2}{2 \hat q_0(s,\eta) + \hat v_0(\eta)} \,,  \label{eq:dsg0-SB}
\end{equation}
one readily computes
\begin{align}
    \partial_{\eta} \hat{g}_0(s, \eta)\big|_{(s, \eta) \in \hat{\mathcal{S}}_\textup{B}}
    &= -\partial_{s} \hat{g}_0(s, \eta) \, \psi'(\eta) \big|_{(s, \eta) \in \hat{\mathcal{S}}_\textup{B}} \notag \\
    &= \frac{(\beta/C) K_h  \hat{r}(\eta)}{\eta^2 + K_h/(\alpha A K)}\left(\eta^2 - \eta_\textup{B}^2 \right), \label{eq:crit-man-b-stab}
\end{align}
after inserting \cref{eq:psi-deriv} and $\eta_\textup{B} = C$.

Hence, the stability of the manifold $\hat{\mathcal{S}}_\textup{B}$ depends on the sign of $\eta-\eta_\textup{B}$: $\hat{\mathcal{S}}_\textup{B}$ is stable for $\eta < \eta_\textup{B}$ and unstable for $\eta > \eta_\textup{B}$.
Stated differently: $\tilde{\mathcal{S}}_\textup{B}$ is normally hyperbolic everywhere, except for the point $F_\textup{B}$, where hyperbolicity is lost.
Similarly to the manifold $\tilde{\mathcal{S}}_\textup{A}$, with its fold point $F_\textup{A}$, 
the manifold $\hat{\mathcal{S}}_\textup{B}$ is divided at $F_\textup{B}$ into an attractive branch $\hat{\mathcal{S}}_+$, below $F_\textup{B}$, and a repelling branch $\hat{\mathcal{S}}_-$, above $F_\textup{B}$, which meet at the fold point, see \cref{fig:scaling-b}(a). 

\begin{figure}
    \includegraphics[width=0.9\textwidth]{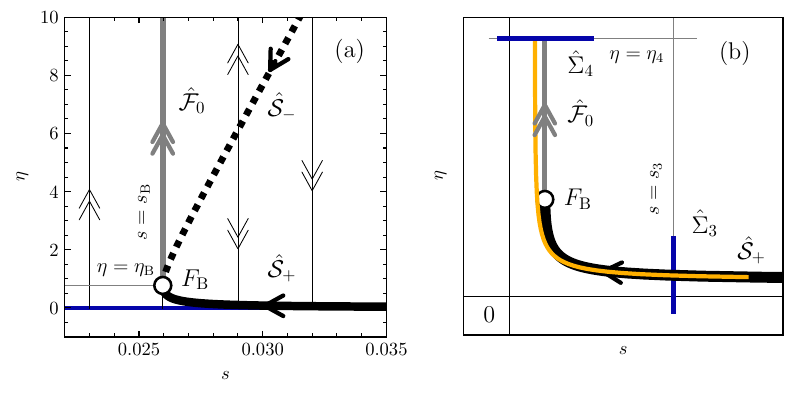}
    \caption{
    (a) The nontrivial part $\hat{\mathcal{S}}_\textup{B}=\hat{\mathcal{S}}_+ \cup \{ F_\textup{B} \} \cup \hat{\mathcal{S}}_-$ (thick black lines) of the critical manifold $\hat{\mathcal{M}}_{\textup{B}}$ consists of
    the fold point $F_\textup{B}$ (circle, \cref{eq:foldpoint-Fb}),
    the attractive branch $\hat{\mathcal{S}}_+$ (solid line), and the repelling branch $\hat{\mathcal{S}}_-$ (dotted line), see \cref{eq:crit-man-b1}.
    The trivial part $\hat{\mathcal{L}}_\textup{B}$ of $\hat{\mathcal{M}}_{\textup{B}}$ is the axis $\eta = 0$ (blue line).
    The slow motion along $\hat{\mathcal{S}}_+$ is governed by \cref{eq:slow-b} and progresses left towards the generic fold point $F_\textup{B}$, where a loss of hyperbolicity induces the switching to fast motion along the fast fiber.
    The segment of the fast fiber extending upwards from $F_\textup{B}$, denoted as $\hat{\mathcal{F}}_0$
    (thick gray line) is described by \cref{eq:rre-lay-b-h} with $s=s_\textup{B}$.
    The horizontal, thick green line marks $\eta=\eta_\textup{B}$.
    (b) Schematic of the $\epsilon$-perturbed extension (solid yellow line) of the critical manifold from panel~(a), following $\hat{\mathcal{S}}_{+,\epsilon}$ and passing near the fold point $F_\textup{B}$.
    The Poincar{\'e} sections $\hat{\Sigma}_3$ and $\hat{\Sigma}_4$ are shown as dark blue lines; the thin gray lines are for $s=s_3$ and $\eta=\eta_4$, respectively; the remaining symbols are as in (a).
    }
    \label{fig:scaling-b}
\end{figure}

The reduced flow on the critical manifold $\hat{\mathcal{S}}_\textup{B}$ follows from \cref{eq:crit-man-b1} with \cref{eq:rre-red-b-s}:
\begin{equation} \label{eq:ds-dt-SB}
    \left.\frac{ds}{dt}\right|_{(s, \eta) \in \hat{\mathcal{S}}_B} = (-\hat{r}(\eta)s + K_s)|_{s = \psi(\eta)}
        = -\alpha K_h h_* < 0 \,,
\end{equation}
recalling $h_*$ from \cref{eq:fixp}.
For the fast variable, we use $ds/dt = d\psi(\eta)/dt = \psi'(\eta) \, d\eta/dt$ and \cref{eq:psi-deriv} to find:
\begin{align}
    \frac{d \eta}{ d t}\bigg|_{(s, \eta) \in \hat{\mathcal{S}}_\textup{B}} 
  &= \frac{d s}{d t} \, \frac{1}{\psi'(\eta)} \bigg|_{(s,\eta) \in \hat{\mathcal{S}}_\textup{B}} 
  = \frac{C h_*}{\beta} \frac{\eta^2}{\eta_\textup{B}^2-\eta^2} \,. \label{eq:slow-b}
\end{align}
Since $C/\beta >0$ and $h_* > 0$, the sign of $d\eta/dt$ on $\hat{\mathcal{S}}_\textup{B}$ is equal to the sign of $\eta_\textup{B}-\eta$.
Starting a trajectory on the manifold at $\eta < \eta_\textup{B}$, it evolves towards larger values of $\eta$ and is accelerated upon approaching the singular point $F_\textup{B}$, whereas the value of $\eta$ decreases upon starting at $\eta > \eta_\textup{B}$;
in both cases, the value of $s$ decreases with constant speed.
In other words, the motion along the attractive branch $\hat{\mathcal{S}}_+$ and the repelling branch $\hat{\mathcal{S}}_-$ for any $s > s_\textup{B}$ is always towards $F_\textup{B}$, see \cref{fig:scaling-b}(a). 
These observations are consistent with the following statement:

\begin{proposition} \label{prop:generic_fold_B}
The critical manifold $\tilde{\mathcal{S}}_\textup{B}$ possesses a single fold point, given by $F_\textup{B}$ in \cref{eq:foldpoint-Fb}, and this point is a generic fold.
\end{proposition}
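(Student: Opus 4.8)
The plan is to follow the blueprint of the proof of \cref{prop:generic_fold_A} essentially verbatim, replacing $\tilde g_0$ at $F_\textup{A}$ by $\hat g_0$ at $F_\textup{B}$ and verifying in turn the fold, nondegeneracy, and transversality conditions. The fold property is already in hand: the stability formula \cref{eq:crit-man-b-stab} shows that $\partial_\eta \hat g_0$ restricted to $\hat{\mathcal{S}}_\textup{B}$ is proportional to $\eta^2-\eta_\textup{B}^2$, so that $\hat g_0(F_\textup{B})=0$ together with $\partial_\eta \hat g_0(F_\textup{B})=0$, while for $\eta>0$ the factor $\eta^2-\eta_\textup{B}^2$ vanishes only at $\eta=\eta_\textup{B}=C$. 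Equivalently, $\psi'(\eta)=0$ has the single positive root $\eta=C$ by \cref{eq:psi-deriv}. Hence $F_\textup{B}$ is the unique fold point on $\hat{\mathcal{S}}_\textup{B}$.

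For nondegeneracy I would first record $\partial_s \hat g_0(F_\textup{B})$. From \cref{eq:dsg0-SB}, $\partial_s\hat g_0 = -AK\hat r(\eta)\eta^2/(2\hat q_0+\hat v_0)$, and by \cref{eq:q0-b} the denominator equals $\sqrt{\hat v_0(\eta)^2+4AK\hat r(\eta)\eta^2 s}$, which is strictly positive at $F_\textup{B}$ because there $\eta=C>0$ and $s=s_\textup{B}>0$. Thus $\partial_s \hat g_0(F_\textup{B})<0$, in particular nonzero. The remaining condition $\partial_\eta^2 \hat g_0(F_\textup{B})\neq 0$ requires the genuine second partial derivative with $s$ held fixed, whereas \cref{eq:crit-man-b-stab} furnishes $\partial_\eta\hat g_0$ only along the manifold $s=\psi(\eta)$. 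I would bridge this by differentiating the identity $\partial_\eta\hat g_0\big|_{\hat{\mathcal{S}}_\textup{B}}=-\partial_s\hat g_0\,\psi'(\eta)$ once more along the manifold: since $\psi'(\eta_\textup{B})=0$, every term carrying a factor $\psi'$ drops out at $F_\textup{B}$, leaving
\begin{equation*}
   \partial_\eta^2\hat g_0(F_\textup{B}) = -\,\partial_s\hat g_0(F_\textup{B})\,\psi''(\eta_\textup{B}).
\end{equation*}
With $\psi''(\eta)=2\alpha K_h\beta C/\eta^3$ obtained from \cref{eq:psi-deriv}, one has $\psi''(\eta_\textup{B})=2\alpha K_h\beta/C^2>0$, so $\partial_\eta^2\hat g_0(F_\textup{B})>0$ and both nondegeneracy inequalities hold.

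Transversality is then immediate from the reduced flow: \cref{eq:ds-dt-SB} gives $\hat f_0(F_\textup{B}) = -\alpha K_h h_*$, which is nonzero since $h_*>0$ under the standing assumption $\alpha K_h>K_s$ (\cref{rem:aK}). Collecting the fold conditions, the two nondegeneracy inequalities $\partial_\eta^2\hat g_0(F_\textup{B})\neq 0$ and $\partial_s\hat g_0(F_\textup{B})\neq 0$, and the transversality condition $\hat f_0(F_\textup{B})\neq 0$ establishes that $F_\textup{B}$ is a generic fold.

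The one genuinely delicate step is the computation of $\partial_\eta^2\hat g_0(F_\textup{B})$: the clean closed form \cref{eq:crit-man-b-stab} holds only on $\hat{\mathcal{S}}_\textup{B}$ and not as a function of $(s,\eta)$ on a full neighborhood, so a naive differentiation would illegitimately fold the induced $s$-variation into the partial derivative. The chain-rule argument above sidesteps an unwieldy direct differentiation of $\hat q_0$ from \cref{eq:q0-b}, reducing everything to the already available quantities $\partial_s\hat g_0(F_\textup{B})$ and $\psi''(\eta_\textup{B})$; checking that the $\psi'$-terms indeed cancel precisely at the fold is the crux of the verification.
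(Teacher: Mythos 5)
Your proposal is correct and follows essentially the same route as the paper's proof: verify the fold conditions via \cref{eq:crit-man-b-stab}, the nondegeneracy conditions $\partial_\eta^2\hat g_0(F_\textup{B})>0$ and $\partial_s\hat g_0(F_\textup{B})<0$ using \cref{eq:dsg0-SB}, and transversality via $\hat f_0(F_\textup{B})=-\alpha K_h h_*<0$. The only difference is cosmetic: the paper obtains $\partial_\eta^2\hat g_0(F_\textup{B})$ by differentiating the on-manifold formula \cref{eq:crit-man-b-stab} directly (implicitly using $\psi'(\eta_\textup{B})=0$ so that the restricted derivative equals the true partial derivative), whereas you make this point explicit through the identity $\partial_\eta^2\hat g_0(F_\textup{B})=-\partial_s\hat g_0(F_\textup{B})\,\psi''(\eta_\textup{B})$, which yields the same (positive) value.
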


\begin{proof}
We have already seen from \cref{eq:crit-man-b-stab} that $F_\textup{B} \in \hat{\mathcal{S}}_\textup{B}$ is a fold point, uniquely satisfying
$\hat{g}_0(F_\textup{B}) = 0$ and $\partial_{\eta}\hat{g}_0(F_\textup{B})=0$.
For the nondegeneracy of $F_\textup{B}$, one verifies the stability along the fast direction,
\begin{align}
    \partial_\eta^2 \hat{g}_0(F_\textup{B}) 
        &= \frac{ (\beta/C) K_h \hat r(\eta_\textup{B})}{\eta_\textup{B}^2 +K_h/(\alpha A K)} \, 
            \partial_\eta \left( \eta^2 -\eta_\textup{B}^2\right)\Big|_{\eta=\eta_B}  \notag \\
        &= 2\eta_\textup{B} \frac{(\beta/C) K_h \hat r(\eta_\textup{B})}{\eta_\textup{B}^2 +K_h/(\alpha A K)}>0\,,
\intertext{and regularity along the slow direction,}
     \partial_s \hat{g}_0(F_\textup{B}) 
     &= -\frac{ \hat{r}(\eta_\textup{B})} { \alpha + K_h/(A K \eta_\textup{B}^2) } < 0 \,. \label{eq:ds-g0-FB}
\end{align}
The first equation follows from \cref{eq:crit-man-b-stab}, and we have used \cref{eq:dsg0-SB} with \cref{eq:v0-b} to obtain the second relation.
Finally, the transversality condition can be read off from \cref{eq:ds-dt-SB}:
\begin{align}
    \hat{f}_0(F_\textup{B})=- \alpha K_h h_* < 0 \,. \label{eq:f0-FB}
\end{align}
Hence, $F_\textup{B}$ is a generic fold point.
\end{proof}

\subsubsection{Extension of slow manifolds through the fold point}
\label{ssec:neutral-transit-FB}

Similarly to the situation of $\tilde{\mathcal{S}}_\textup{A}$, any compact submanifold of $\hat{\mathcal{S}}_+$ persists for $\epsilon$ sufficiently small, and can be extended beyond $F_\textup{B}$. Indeed, let
\begin{equation}\label{def:hat_Sigma}
\hat{\Sigma}_3 := \{ (s,\eta) : s = s_3, \;\eta\in \hat{I} \},  \qquad    \hat{\Sigma}_4:=\left\{ (s, \eta): \, |s-s_\textup{B}| \leq \delta, \eta = \eta_4 \right\}
\end{equation}
for some $s_3>s_\textup{B}$, $\eta_4>\eta_\textup{B}$ and an appropriate compact interval $\hat{I}$; see also \cref{fig:scaling-b}(b).
Analogously to \cref{ssec:acid-transit-FA}, the extension of any slow manifold $\hat{\mathcal{S}}_{+,\epsilon}$ beyond the generic fold point $F_\textup{B}$ (\cref{prop:generic_fold_B}) is given by the following statement, {which is granted by Theorem 2.1 of Ref.~\citenum{krupa:SIAM-JMA2001}.}
Again, a nonvanishing value of the flow in \cref{eq:f0-FB} excludes the possibility of canards at the point $F_\textup{B}$.

\begin{proposition}    \label{PROP:transition_Fold_B}
For appropriate sections $\hat{\Sigma}_3$ and $\hat{\Sigma}_4$ as defined in \cref{def:hat_Sigma}, the transition map $\Pi_{3,4}^{(\epsilon)}:\hat{\Sigma}_{3}\rightarrow \hat{\Sigma}_{4}$ is well-defined for sufficiently small $\epsilon>0$, and it acts as an $\mathcal{O}(e^{-c_2/\epsilon})$-contraction for some $c_2>0$. Furthermore, for any $P\in \hat{\Sigma}_3$, the image under the transition map satisfies
\begin{equation}
    \label{eq:hitting_fold_B}
  \Pi_{3,4}^{(\epsilon)}(P) = (s_\textup{B},\eta_4) + \mathcal{O}\left( \epsilon^{2/3}\right).
\end{equation}
\end{proposition}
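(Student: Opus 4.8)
The plan is to treat this statement as the exact counterpart of \cref{PROP:transition_Fold_A} for the fold $F_\textup{B}$, reducing it to Theorem~2.1 of Krupa and Szmolyan \cite{krupa:SIAM-JMA2001}. Its hypotheses are already in hand: \cref{prop:generic_fold_B} establishes that $F_\textup{B}$ is a generic fold of $\hat{\mathcal{S}}_\textup{B}$, and by Fenichel's theorem the attractive branch $\hat{\mathcal{S}}_+$ ($\eta<\eta_\textup{B}$) perturbs to a slow manifold $\hat{\mathcal{S}}_{+,\epsilon}$ for small $\epsilon>0$ that meets the incoming section $\hat{\Sigma}_3$. In the fast system \cref{eq:rre-f-b}, $\eta$ is the fast variable and $s$ the slow one, so $\hat{g}_0$ plays the role of the fast right-hand side and $\hat{f}_0$ that of the slow drift in the Krupa--Szmolyan framework.

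First I would center coordinates at $F_\textup{B}$, writing $(u,w):=(\eta-\eta_\textup{B},\,s-s_\textup{B})$, and record the defining data from the proof of \cref{prop:generic_fold_B}: $\hat{g}_0(F_\textup{B})=0$, $\partial_\eta\hat{g}_0(F_\textup{B})=0$, together with $\partial_\eta^2\hat{g}_0(F_\textup{B})>0$, $\partial_s\hat{g}_0(F_\textup{B})<0$, and the nonvanishing drift $\hat{f}_0(F_\textup{B})=-\alpha K_h h_*<0$ from \cref{eq:f0-FB}. Compared with $F_\textup{A}$, the signs of $\partial_\eta^2\hat{g}_0$ and of $\hat{f}_0$ are reversed, while that of $\partial_s\hat{g}_0$ is unchanged; this is precisely the effect of the point reflection $(u,w)\mapsto(-u,-w)$, which flips the second fast derivative and the slow drift but preserves $\partial_w$ of the fast field. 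Applying this reflection therefore brings the local system into the same orientation convention already used for $F_\textup{A}$, and a subsequent linear rescaling of $(u,w,t')$ normalizes the leading coefficients $\partial_\eta^2\hat{g}_0(F_\textup{B})$, $\partial_s\hat{g}_0(F_\textup{B})$, and $\hat{f}_0(F_\textup{B})$ to the canonical fold normal form of \cite{krupa:SIAM-JMA2001}.

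With the system in normal form, Theorem~2.1 of \cite{krupa:SIAM-JMA2001} applies verbatim. It guarantees that $\hat{\mathcal{S}}_{+,\epsilon}$ extends past $F_\textup{B}$ and reaches the outgoing section $\hat{\Sigma}_4$, that the induced transition map $\Pi_{3,4}^{(\epsilon)}$ is well defined and exponentially contracting of order $\mathcal{O}(e^{-c_2/\epsilon})$ for some $c_2>0$, and that the landing point satisfies $\Pi_{3,4}^{(\epsilon)}(P)=(s_\textup{B},\eta_4)+\mathcal{O}(\epsilon^{2/3})$, the $\epsilon^{2/3}$ scaling being the signature of the blow-up that resolves the fold. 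The transversality condition $\hat{f}_0(F_\textup{B})\neq 0$ from \cref{eq:f0-FB} places us in the generic (non-canard) branch of the theorem, excluding a canard through $F_\textup{B}$. Undoing the reflection and rescaling finally transfers these estimates back to the original coordinates and sections $\hat{\Sigma}_3,\hat{\Sigma}_4$.

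I expect the only genuine work to be bookkeeping: verifying that the reflection and rescaling put \cref{eq:rre-f-b} exactly into the hypotheses of Krupa--Szmolyan with the correct signs, so that the theorem may be quoted without modification. Once the orientation mismatch with $F_\textup{A}$ is resolved by the reflection, no new analysis is needed, and the proof reduces to invoking Theorem~2.1 of \cite{krupa:SIAM-JMA2001} exactly as in \cref{PROP:transition_Fold_A}.
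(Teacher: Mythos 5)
Your proposal is correct and follows essentially the same route as the paper: the paper likewise treats \cref{PROP:transition_Fold_B} as a direct consequence of Theorem~2.1 of Krupa and Szmolyan, with \cref{prop:generic_fold_B} supplying the generic-fold hypotheses and the nonvanishing drift in \cref{eq:f0-FB} excluding canards. Your explicit reflection-and-rescaling bookkeeping to match the normal-form orientation is exactly the step the paper leaves implicit in the word ``analogously,'' and it is carried out with the correct signs.
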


\subsection{Global picture underlying the formation of the limit cycle}\label{sec:global-picture}

So far, we have clarified how two different scalings---essentially represented in \cref{fig:scheme} by the charts $K_\textup{A}$ and $K_\textup{B}$, operating in different coordinates $(\sigma, h)$ and $(s, \eta)$, respectively---resolve the dynamics around the fold points $F_\textup{A}$ and $F_\textup{B}$.
The existence of an attracting limit cycle was already established in Section~\ref{ssec:lim-cycle-exist}, and it now remains to connect these two perspectives to a global picture, revealing the formation and overall structure of the limit cycle.
Our findings from the fast–slow analysis are then illustrated and corroborated by numerical solutions for limit cycle trajectories along the $\epsilon$-perturbed slow manifolds and fast fibers.

\subsubsection{Constructing the limit cycle from transition maps}

\begin{figure}
    \includegraphics[width=0.85\textwidth]{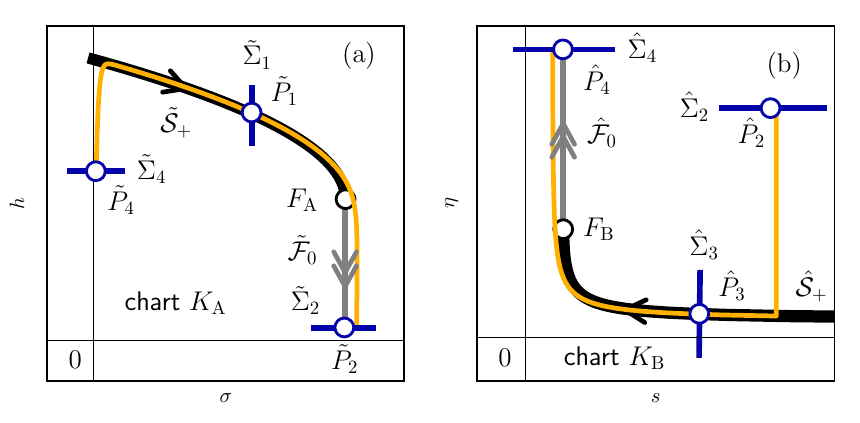}
	\caption{Composition of the limit cycle from transition maps within and between the charts $K_\textup{A}$ and $K_\textup{B}$,
	as described in \cref{sec:global-picture}.
	The two panels depict the slow attractive manifolds $\tilde{\mathcal{S}}_+$, $\hat{\mathcal{S}}_+$ (solid black line), the fast    fibers $\tilde{\mathcal{F}}_0$, $\hat{\mathcal{F}}_0$ (solid gray lines) in the singular limit, an $\epsilon$-perturbed trajectory (solid yellow lines), and suitable Poincar{\'e} sections (dark blue lines).
	(a)~Chart $K_\textup{A}$ uses coordinates $(\sigma, h)$ to show the manifold $\tilde{\mathcal{S}}_+ \cup \tilde{\mathcal{F}}_0$, which determines the passage through the fold point $F_\textup{A}$, and Poincar{\'e} sections $\tilde{\Sigma}_1$, $\tilde{\Sigma}_2$, $\tilde{\Sigma}_4$; with their intersection points $\tilde{P}_1 = \tilde{\Sigma}_1 \cap \tilde{S}_+$, $\tilde{P}_2 = \tilde{\Sigma}_2 \cap \tilde{\mathcal{F}}_0$ and $\tilde{P}_4 = \tilde{\Sigma}_4 \cap \tilde{\mathcal{F}}_0 = \{ \sigma=0 \}$.
        (b)~Chart $K_\textup{B}$ uses coordinates $(s, \eta)$ to show the manifold $\hat{\mathcal{S}}_+ \cup \hat{\mathcal{F}}_0$, which determines the passage through the fold point $F_\textup{B}$, and Poincar{\'e} sections $\hat{\Sigma}_2$, $\hat{\Sigma}_3$, $\hat{\Sigma}_4$ with intersection points $\hat{P}_3 = \hat{\Sigma}_3 \cap \hat{S}_+$ and $\hat{P}_4 = \hat{\Sigma}_4 \cap \hat{\mathcal{F}}_0$. The point $\hat P_2$ is identified with $\tilde P_2$ in chart $K_\textup{A}$.
	}
	\label{fig:global-picture}
\end{figure}

Building on the results of \cref{sec:acid,sec:neutral}, we construct a sequence of Poincar{\'e} sections and their corresponding transition maps, as illustrated in \cref{fig:global-picture}:
\begin{align*}
	\Pi_{1,2}^{(\epsilon)} &: \tilde{\Sigma}_1 \to \tilde{\Sigma}_2
    && \text{for the passage of the fold point $F_\textup{A}=(\sigma_\textup{A}, h_\textup{A})$ in chart $K_\textup{A}$,} \\
	\Phi_{\textup{A}, \textup{B}} &: \tilde{\Sigma}_2 \to \hat{\Sigma}_2
	  && \text{for the transformation from chart $K_\textup{A}$ to $K_\textup{B}$,} \\
	\Pi_{2,3}^{(\epsilon)} &: \hat{\Sigma}_2 \to \hat{\Sigma}_3
    && \text{for the transition towards the slow attracting manifold $\hat{\mathcal{S}}_+$ in chart $K_\textup{B}$,} \\
	\Pi_{3,4}^{(\epsilon)} &: \hat{\Sigma}_3 \to \hat{\Sigma}_4
    && \text{for the passage of the fold point $F_\textup{B}=(s_\textup{B}, \eta_\textup{B})$ in chart $K_\textup{B}$,} \\
	\Phi_{\textup{B}, \textup{A}} &: \hat{\Sigma}_4 \to \tilde{\Sigma}_4
    && \text{for the transformation from chart $K_\textup{B}$ to $K_\textup{A}$,} \\
	\Pi_{4,1}^{(\epsilon)} &: \tilde{\Sigma}_4 \to \tilde{\Sigma}_1
    && \text{for the transition towards the slow attracting manifold $\tilde{\mathcal{S}}_+$ in chart $K_\textup{A}$.}
\end{align*}
Here, $\hat{\Sigma}_2$ is the same as $\tilde{\Sigma}_2$ after a coordinate transformation from $(\sigma, h)$ to $(s,\eta)$, and vice versa; $\tilde{\Sigma}_4$ is $\hat{\Sigma}_4$ in $(\sigma, h)$-coordinates. The individual steps are described in detail below.
One full loop of the limit cycle is captured by the composition
$\Pi: \tilde{\Sigma}_1 \to \tilde{\Sigma}_1$ of the above transition maps:
\begin{equation}
    \Pi = \Pi_{1,2}^{(\epsilon)} \circ \Phi_{\textup{A}, \textup{B}} \circ \Pi_{2,3}^{(\epsilon)} \circ \Pi_{3,4}^{(\epsilon)} \circ \Phi_{\textup{B}, \textup{A}} \circ \Pi_{4,1}^{(\epsilon)}\,.
\end{equation}
We note that working with an arbitrarily small, but nonzero $\epsilon > 0$ is crucial for ensuring well-defined transformations between the charts $K_\textup{A}$ and $K_\textup{B}$.

\subsubsection*{Description of $\Pi_{1,2}^{(\epsilon)}$}

The transition $\Pi_{1,2}^{(\epsilon)}$ occurs within chart $K_\textup{A}$ and was characterized in \cref{PROP:transition_Fold_A}. As detailed in \cref{ssec:acid-eps0}, the upper branch $\tilde{\mathcal{S}}_+$ of the slow critical manifold $\mathcal{S}_\textup{A}$ is quickly attracting. In the singular limit, $\epsilon=0$, the intersection of $\tilde{\Sigma}_1$ with $\tilde{\mathcal{S}}_+$, see Figure~\ref{fig:scaling-a}(b), is well-defined and corresponds to a certain point $\tilde{P}_1:=(\sigma_1, h_1)=(\varphi(h_1), h_1)$ for some $h_1>0$, with $\varphi$ defined in \cref{eq:crit-man-a}. From $\tilde{P}_1$, the trajectory follows the slow manifold $\tilde{\mathcal{S}}_+$ up to the fold point $F_\textup{A}=(\sigma_\textup{A},h_\textup{A})$ (cf. \cref{eq:foldpoint-Fa}), as described in \cref{ssec:acid-eps0}, and then transitions to and along the fast fiber $\tilde{\mathcal{F}}_0$, \cref{eq:rre-lay-a-h} with $\sigma=\sigma_\textup{A}$. As a result, the trajectory follows the composite manifold $\tilde{\Gamma}_0 := \tilde{\mathcal{S}}_+ \cup \tilde{\mathcal{F}}_0$ and arrives at the point $\tilde{P}_2 := (\sigma_\textup{A}, h_2) \in \tilde{\Sigma}_2$ for the Poincar{\'e} section $\tilde{\Sigma}_2$ defined in~\cref{def:tilde_Sigma}.

For the $\epsilon$-perturbed case, the trajectory begins at a slightly shifted point, $\tilde{P}_{1,\epsilon} \in \tilde{\Sigma}_1 \cap \tilde{\mathcal{S}}_{+,\epsilon}$, follows the perturbed manifold $\tilde{\Gamma}_\epsilon = \tilde{\mathcal{S}}_{+,\epsilon} \cup \tilde{\mathcal{F}}_\epsilon$, and reaches $\tilde{P}_{2,\epsilon} := (\sigma_{\textup{A},\epsilon}, h_2) \in \tilde{\Sigma}_2$.
Both the fast fiber $\tilde{\mathcal{F}}_0$ and its perturbed continuation $\tilde{\mathcal{F}}_\epsilon$ are transverse to $\tilde{\Sigma}_2$, ensuring that the transition map $\Pi_{1,2}^{(\epsilon)}$ is well-defined.

\subsubsection*{Description of $\Phi_{\textup{A}, \textup{B}}$}

The transformation $\Phi_{\textup{A}, \textup{B}}: \tilde{\Sigma}_2 \to \hat{\Sigma}_2$ facilitates the switch from the chart $K_\textup{A}$ with the coordinates $(\sigma, h)$ to chart $K_\textup{B}$ with the coordinates $(s, \eta)$
such that $\sigma = \epsilon s$ and $\eta=h/\epsilon$, see \cref{eq:s-resc-a,eq:h-resc-b}. Hence,
the Poincar{\'e} section $\tilde{\Sigma}_2$, as defined in chart $K_\textup{A}$ by \cref{def:tilde_Sigma}, transforms into its counterpart in $K_\textup{B}$:
\begin{align} \label{hat_Sigma_2}
	\hat{\Sigma}_2:=\left\{ (s, \eta): \, |s-s_\textup{A}| \leq \delta/\epsilon, \eta = \eta_2 \right\}\,, 
\end{align}
where $s_\textup{A}:=\sigma_\textup{A}/\epsilon$ and $\eta_2:= h_2/\epsilon$. For a small enough $\epsilon$, this transformation is well-defined and unambiguous. As a result, the point $\tilde{P}_{2,\epsilon} \in \tilde{\Sigma}_2$ in chart $K_\textup{A}$ corresponds to $\hat{P}_{2,\epsilon} = (s_{\textup{A},\epsilon},\eta_2) := (\sigma_{\textup{A},\epsilon}/\epsilon, h_2/\epsilon) \in \hat{\Sigma}_2$ in chart $K_\textup{B}$.

\subsubsection*{Description of $\Pi_{2,3}^{(\epsilon)}$}

In chart $K_\textup{B}$, the manifold $\hat{\mathcal{S}}_+$ and its perturbed counterpart $\hat{\mathcal{S}}_{+,\epsilon}$ are strongly attractive, as obtained in \cref{ssec:neutral-eps0}. Starting from the point $\hat{P}_{2,\epsilon}$, the trajectory rapidly descends along the fast fiber at $s=s_{\textup{A},\epsilon}$ until it approaches the manifold $\hat{\mathcal{S}}_{+,\epsilon}$. Subsequently, it transitions to slow motion, drifting leftwards along $\hat{\mathcal{S}}_{+,\epsilon}$, see \cref{eq:ds-dt-SB}, until it intersects the Poincar{\'e} section $\hat{\Sigma}_3$ at the point $\hat{P}_{3,\epsilon} \in \hat{\Sigma}_3 \cap \hat{\mathcal{S}}_{+,\epsilon}$.

\subsubsection*{Description of $\Pi_{3,4}^{(\epsilon)}$}

This transformation describes the passage through the fold point $F_\textup{B}$, paralleling $\Pi_{1,2}^{(\epsilon)}$;
it occurs in chart $K_\textup{B}$ and was characterized in \cref{PROP:transition_Fold_B}.
The trajectory starts from the point $P_{3,\epsilon} \in \hat{\Sigma}_3\cap \hat{\mathcal{S}}_{+,\epsilon}$ and follows the manifold $\hat{\Gamma}_\epsilon = \hat{\mathcal{S}}_{+,\epsilon} \cup \hat{\mathcal{F}}_\epsilon$. Initially, the trajectory moves along the slow branch $\hat{\mathcal{S}}_{+,\epsilon}$, passing through an $\epsilon$-neighborhood of the fold point $F_\textup{B} = (s_\textup{B},\eta_\textup{B})$, \cref{eq:foldpoint-Fb}. Near the fold point, the trajectory transitions to the fast fiber $\hat{\mathcal{F}}_\epsilon$, then rapidly moving upwards to reach the point $\hat{P}_{4,\epsilon} := (s_{\textup{B},\epsilon}, \eta_4) \in \hat{\Sigma}_4$ at the Poincar{\'e} section $\hat{\Sigma}_4$ in \cref{def:hat_Sigma}.
The fast fiber $\hat{\mathcal{F}}_0$ and its perturbed continuation $\hat{\mathcal{F}}_\epsilon$ are transverse to $\hat{\Sigma}_4$, ensuring that the transition map $\Pi_{3,4}^{(\epsilon)}$ is well-defined.

\subsubsection*{Description of $\Phi_{\textup{B}, \textup{A}}$}

In analogy to $\Phi_{\textup{A}, \textup{B}}$, the transformation $\Phi_{\textup{B}, \textup{A}}: \hat{\Sigma}_4 \to \tilde{\Sigma}_4$ represents the switch from chart $K_\textup{B}$ back to chart $K_\textup{A}$. Recalling the relation between the coordinates, $\sigma = \epsilon s$ and $\eta=h/\epsilon$, the Poincar{\'e} section $\hat{\Sigma}_4$ defined in chart $K_\textup{B}$ by \cref{def:hat_Sigma} transforms in $K_\textup{A}$ into:
\begin{align}
	\tilde{\Sigma}_4:=\left\{ (\sigma, h): \, |\sigma-\sigma_\textup{B}| \leq \epsilon \delta, h = h_4 \right\}\,, \label{eq:sigma4-tilde}
\end{align}
where $\sigma_\textup{B}:= \epsilon s_\textup{B}$ and $h_4:= \epsilon \eta_4$. For $\epsilon > 0$, this transformation is well-defined and unambiguous. As a result, the point $\hat{P}_{4,\epsilon}  := (s_{\textup{B},\epsilon}, \eta_4) \in \tilde{\Sigma}_4$ in chart $K_\textup{B}$ turns into the point $\tilde{P}_{4,\epsilon} = (\sigma_{\textup{B},\epsilon}, h_4) := (\epsilon s_{\textup{B},\epsilon}, \epsilon \eta_4) \in \tilde{\Sigma}_4$ in chart $K_\textup{A}$.

\subsubsection*{Description of $\Pi_{4,1}^{(\epsilon)}$}

In chart $K_\textup{A}$, the manifold $\tilde{\mathcal{S}}_+$, and hence $\tilde{\mathcal{S}}_{+,\epsilon}$, is strongly attractive, as we have found in \cref{sec:neutral}.  This means that starting from the point $\tilde{P}_{4,\epsilon}$, the trajectory rapidly ascends along the fast fiber with $\sigma = \sigma_{\textup{B}, \epsilon}$ until it approaches the manifold $\tilde{\mathcal{S}}_{+,\epsilon}$. After that, the trajectory proceeds to slow motion, moving rightwards along $\tilde{\mathcal{S}}_{+,\epsilon}$ until it reaches the Poincar{\'e} section $\tilde{\Sigma}_1$ at the initial point $\tilde{P}_{1,\epsilon} \in \tilde{\Sigma}_1 \cap \tilde{\mathcal{S}}_{+,\epsilon}$, which closes the orbit. 

Overall, starting at the intersection point $\tilde{P}_{1,\epsilon}$ of the slow attracting manifold $\tilde{\mathcal{S}}_{+,\epsilon}$ with the Poincar{\'e} section $\tilde{\Sigma}_1$, we have demonstrated that the trajectory completes one full loop, ultimately returning to the same point $\tilde{P}_{1,\epsilon}$.

\subsubsection{Numerical results for the perturbed limit cycle} 
\label{ssec:traj-vs-gspt}

\begin{figure}
    \includegraphics[width=0.9\textwidth]{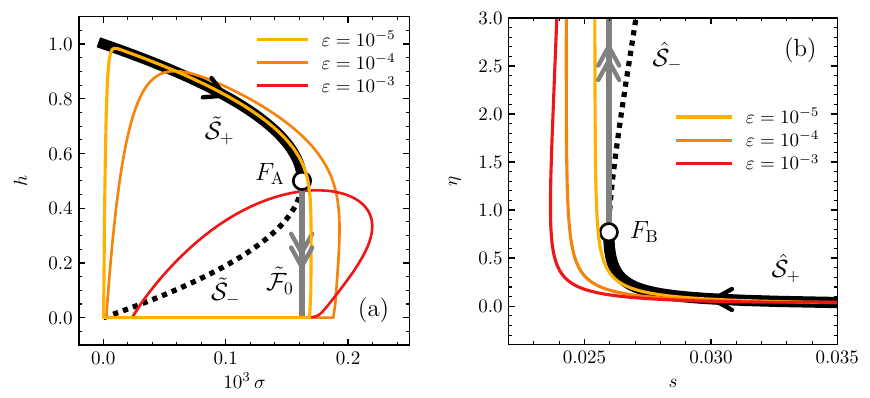}
    \caption{
    Numerical solutions of \cref{eq:rre-eps} for $\epsilon$-perturbed limit cycle trajectories for three values of $\epsilon$
    (solid colored lines) in comparison to the critical manifold (black lines) and fast fibres (gray lines) of GSPT; all other parameters are the same as in \cref{fig:limit-cycle-start,table:reduced-parameters}.
    The panels (a) and (b) correspond to the charts $K_\textup{A}$ and $K_\textup{B}$ in \cref{fig:global-picture}.
    (a)~The full limit cycles in the coordinates $(\sigma,h)$, together with the fold point $F_\textup{A}$,
    the critical manifold $\tilde{\mathcal{S}}_\textup{A}=\tilde{\mathcal{S}}_+ \cup \tilde{\mathcal{S}}_-$ (black line, \cref{eq:crit-man-a}), and the fast fiber $\tilde{\mathcal{F}}_0$ passing through $F_\textup{A}$, cf. \cref{sec:acid}.
    (b)~The region near the fold point $F_\textup{B}$ using the coordinates $(s,\eta)$,
    showing parts of limit cycle trajectories (colored lines), the critical manifold $\hat{\mathcal{S}}_\textup{B}=\hat{\mathcal{S}}_+ \cup \{ F_\textup{B} \} \cup \hat{\mathcal{S}}_-$ (black line, \cref{eq:crit-man-b1}), and the fast fiber through $F_\textup{B}$ (gray line), cf. \cref{sec:neutral}.
    }
    \label{fig:final-num}
\end{figure}

After having constructed the limit cycle for arbitrarily small $\epsilon > 0$ as a perturbation of the critical manifold by means of GSPT, we test these predictions in \cref{fig:final-num} against a few representative limit cycle trajectories, interpolating between the real system and the idealised singular limit.
The value $\epsilon=\num{e-3}$ corresponds to the experimental system \eqref{eq:rre} and decreasing $\epsilon$ increases the degree of timescale separation.
Trajectories were obtained for $\epsilon = \num{1e{-5}}, \num{1e{-4}}, \num{1e{-3}}$ via numerical integration of \cref{eq:rre-eps}
using Maple’s \texttt{dsolve} routine with the stiff solver option and an absolute accuracy of $\num{e{-10}}$.

These numerical results neatly illustrate the emergence of fast–slow dynamics as $\epsilon$ decreases: the trajectories become increasingly aligned with the attractive branches of the slow critical manifold and the fast transition layers, as predicted by GSPT. In particular, the trajectory for $\epsilon = \num{e-5}$ closely follows the $\epsilon$-perturbed limit cycle, described in \cref{sec:global-picture,fig:global-picture}.
The sharp directional changes near the fold points $F_\textup{A}$ and $F_\textup{B}$ correspond well to the fast, jump-like motions along the associated fast fibers,
while the trajectory segments in between trace the slow manifolds with high accuracy.
The trajectory for $\epsilon = \num{e-4}$ shows already pronounced deviations from the critical manifold
and is intermediate between the real system ($\epsilon = \num{e-3}$) and the idealized one ($\epsilon = \num{e-5}$), highlighting the gradual approach to the critical manifold and the role of $\epsilon$ in controlling the degree of timescale separation.
The overall behavior of the numerical solutions not only corroborates the geometric picture established by the GSPT analysis, in \cref{sec:acid,sec:neutral,sec:global-picture}, but also demonstrates how the limit cycle converges to its singular limit in a piecewise manner -- composed of alternating slow drifts and fast transitions.

\section{Implications for the chemical system} \label{sec:implications}

\subsection{Timescales and asymptotic estimates for the period} \label{sec:timescales}

One of the central practical questions related to rhythms concerns the period of oscillations and how it depends on the parameters of the system. Our GSPT analysis of the limit cycle reveals trajectory segments evolving on distinct timescales, which enable estimation of the oscillation period. In particular, we find two distinct branches of slow dynamics, evolving through the acidic domain and the other through the basic domain, see \cref{sec:acid,sec:neutral}. These intervals of slow motion alternate with episodes of fast motion, forming a full cycle of oscillation, as shown in \cref{sec:global-picture}. Under sufficient timescale separation, the contribution of the fast segments to the total period of oscillation becomes negligible, and the oscillation period is well approximated by the time spent along the slow branches. It should be noted that in the singular limit $\epsilon = 0$, the period diverges and the limit cycle ceases to exists. However, for any arbitrarily small $\epsilon > 0$, a limit cycle emerges and its period can be well approximated by the time required to traverse the corresponding branches of the critical manifold.

We now estimate the characteristic times $\mathcal{T}_\textup{acid}$ and $\mathcal{T}_\textup{basic}$ associated with the slow motion along the two branches of the limit cycle. The time $\mathcal{T}_\textup{acid}$ corresponds to motion along the perturbed slow manifold $\tilde{\mathcal{S}}_{+,\epsilon}$ in the acidic domain, approaching the $\epsilon$--vicinity of the fold point $F_\textup{A}$. Similarly, $\mathcal{T}_\textup{basic}$ describes the slow return through the basic domain along $\hat{\mathcal{S}}_{+,\epsilon}$ toward the neighborhood of the neutral fold point $F_\textup{B}$. Notably, as is inferred from \cref{fig:limit-cycle-start}(d), the approach to $F_\textup{B}$ is distinctly slower than that to $F_\textup{A}$. It is therefore of particular interest to understand the origin of this asymmetry in the dynamics.

\paragraph*{Time spent in the acidic domain, along the branch $\tilde{\mathcal{S}}_+$ (chart $K_\textup{A}$).}

Formally, the time $\mathcal{T}_\textup{acid}$ for the motion along the acidic branch of the limit cycle is defined as
\begin{align}
    \mathcal{T}_\textup{acid}(\epsilon) &:= \int_{h_\textup{start}^\textup{}}^{h_\textup{end}^\textup{}}
        \left|\epsilon \frac{d h_\epsilon}{d \tau}(\sigma,h)\right|_{(\sigma,h) \in \tilde{\mathcal{S}}_{+,\epsilon}}^{-1} \diff h\,, \label{eq:T-acid-def}
\end{align}
where $d h_\epsilon/d\tau$ is the $\epsilon$-perturbed analog of reduced dynamics in \cref{eq:slow-a} along the manifold $\tilde{\mathcal{S}}_{+,\epsilon}$. The integration over $h$ starts at $h_\textup{start}$ and ends at $h_\textup{end}$, with the latter corresponding to the perturbed point $F_{\textup{A},\epsilon} = F_\textup{A} + O(\epsilon^{2/3})$, see \cref{PROP:transition_Fold_A}. In the similar way, the starting value of integration $h_\textup{start}$ is determined by the intersection of the fast fiber through $F_{\textup{B},\epsilon}$ with the slow manifold $\tilde{\mathcal{S}}_{+,\epsilon}$.

Aiming to obtain $\mathcal{T}_\textup{acid}$ to leading order in $\epsilon$,
we approximate the perturbed manifold and fold points by their singular counterparts, which lie within an $O(\epsilon^{2/3})$ vicinity, as suggested by \cref{PROP:transition_Fold_A,PROP:transition_Fold_B}. Accordingly, the derivative in \cref{eq:T-acid-def} is approximated by $d h/d\tau$ from \cref{eq:slow-a}, evaluated along the stable branch $\tilde{\mathcal{S}}_+$ of the critical manifold $\tilde{\mathcal{S}}_\textup{A}$.
The upper limit of integration is taken to be $h_\textup{end} \approx h_\textup{A}=1/2$, corresponding to the fold point $F_\textup{A}$, see \cref{eq:foldpoint-Fa}.

The critical manifold $\tilde{\mathcal{S}}_\textup{A}$
is parametrized by the function $\phi(h)$ given by \cref{eq:crit-man-a}, and
the lower integration boundary $h_\textup{start}^\textup{}$ is determined by the condition $\varphi(h_\textup{start}^\textup{})\stackrel{!}{=}\sigma_\textup{B}$, where $\sigma_\textup{B}= \epsilon s_\textup{B}$ with $s_\textup{B}=\alpha (2\beta+1)K_h$ corresponding to the variable $s$ taken at point $F_\textup{B}$, see \cref{eq:foldpoint-Fb}.
This yields the quadratic equation $\alpha (\beta/C) K_h h_\textup{start}^\textup{} (1-h_\textup{start}^\textup{}) \stackrel{!}{=} \epsilon \alpha (2\beta +1)K_h$ with respect to $h_\textup{start}$. The larger root corresponds to the intersection with the stable part $\tilde{\mathcal{S}}_+$ of the critical manifold, and is given by $h_\textup{start}^\textup{} = {1}/{2} + \sqrt{{1}/{4} - \epsilon C (2 \beta +1)/\beta}$, which simplifies to
$h_\textup{start}^\textup{} = 1 + \mathcal{O}(\epsilon)$ for small $\epsilon$.

With these ingredients, we obtain
\begin{align}
    \mathcal{T}_\textup{acid}(\epsilon)
    &\approx \int_1^{h_\textup{A}}
        \left|\epsilon \frac{d h}{d \tau}(\sigma,h)\right|_{(\sigma,h) \in \tilde{\mathcal{S}}_+}^{-1} \diff h 
        = -\frac{2\beta}{\epsilon C} \int_1^{1/2} \frac{h-1/2}{h-h_*} \, \diff h 
    = \frac{\beta}{\epsilon C } w(h_*)       \label{eq:T-acid-dimless}
\end{align}
with 
\begin{align}
    w(h) = 1 - (1 - 2h) \ln \left(\frac{2 - 2h}{1 - 2h}\right)\,. \label{eq:w(h)}
\end{align}
Inserting $h_* \approx 0.0906$ from \cref{eq:fixp} into \cref{eq:T-acid-dimless}, we compute:
\begin{equation}
    \mathcal{T}_\textup{up}(\epsilon) \approx 0.347 \, \frac{\beta}{\epsilon C} \,. \label{eq:T-acid-dimless-num}
\end{equation}

\paragraph*{Time spent in the basic domain, along the branch $\hat{\mathcal{S}}_+$ (chart $K_\textup{B}$).}

As in the analysis of the acidic domain, we now evaluate the leading contribution to the oscillation period $\mathcal{T}_\textup{basic}$ associated with motion along the branch of the limit cycle that lies in the basic domain. We approximate the perturbed manifold by the critical manifold $\hat{\mathcal{S}}_\textup{B}$, whose stable branch $\hat{\mathcal{S}}_+$ is parametrized by the function $\psi(\eta)$, see \cref{eq:crit-man-b1}. The time $\mathcal{T}_\textup{basic}$ is obtained from \cref{eq:slow-b} by integrating over $\eta$ as
$\mathcal{T}_\textup{basic}(\epsilon) \approx \int_{\eta_\textup{start}}^{\eta_\textup{end}} [ (d \eta/ d t)|_{\hat{\mathcal S}_+} ]^{-1} \diff \eta $, where the integration limits $\eta_\textup{start}$ and $\eta_\textup{end}$
correspond to the intersections of the slow manifold $\hat{\mathcal{S}}_+$ with fast fibers passing through the fold points $F_\textup{A}$ and $F_\textup{B}$, respectively; see \cref{fig:global-picture}.

The starting point $\eta_\textup{start}$ is defined by the condition $\psi(\eta_\textup{start}^\textup{})\stackrel{!}{=} s_\textup{A}$, where $s_\textup{A}=\sigma_\textup{A}/\epsilon$ (as in \cref{hat_Sigma_2}) with $\sigma_\textup{A} = \alpha\beta K_h/(4C)$ given by the fold point $F_\textup{A}$, see \cref{eq:foldpoint-Fa}. This leads to the quadratic equation $4\epsilon\beta C \textup{start}^2 -(\beta - 4\epsilon C) \textup{start} + 4\epsilon \beta C^2 =0$, whose smaller root
corresponds to the intersection with the stable branch of the critical manifold $\hat{\mathcal S}_+$, yielding
$\eta_\textup{start} = [\beta -4\epsilon C -\sqrt{(\beta -4\epsilon C)^2-(8\epsilon\beta C)^2}]/(8\epsilon\beta)= 4\epsilon C^2 + \mathcal{O}(\epsilon^2)$. Alternatively, this result follows by approximating $\psi(\eta) \simeq \alpha \beta K_h C/\eta$ in the limit $\eta \to 0$. The end point of integration is approximated by $\eta_\textup{end} \approx \eta_\textup{B} = C$, corresponding to the fold point $F_\textup{B}$.

Combining these results, the time it takes to traverse the basic branch is evaluated to 
\begin{align}
    \mathcal{T}_\textup{basic}(\epsilon) & \approx \int_{4\epsilon C^2}^{\eta_\textup{B}} \left(\frac{d \eta}{d t}\biggl|_{\hat{\mathcal S}_+}\right)^{-1} \diff \eta =  \frac{\beta}{C h_*} \int_{4\epsilon C^2}^{\eta_\textup{B}} \frac{\eta_\textup{B}^2-\eta^2}{\eta^2} \diff \eta  \notag \\
    & = -\left.\frac{\beta}{h_*} \left(\frac{\eta}{C}+\frac{C}{\eta}\right) \right|_{4\epsilon C^2}^{C}
      =  \frac{\beta}{h_*} \frac{(1 - 4\epsilon C)^2}{4\epsilon C}
      \approx \frac{\beta }{4\epsilon C h_* } \,. \label{eq:T-basic-dimless}
\end{align}
Inserting $h_* \approx 0.0906$ (\cref{eq:fixp}), we find
\begin{align}
    \mathcal{T}_\textup{basic}(\epsilon) \approx 2.76 \, \frac{\beta }{\epsilon C}  \,. \label{eq:T-basic-dimless-num}
\end{align}

Thus, for the total oscillation period $\mathcal{T}(\epsilon)$ to the leading order in $\epsilon$, we obtain
\begin{align}
    \mathcal{T}(\epsilon) & \approx \mathcal{T}_\textup{acid}(\epsilon) + \mathcal{T}_\textup{basic}(\epsilon) 
                           = \frac{\beta}{\epsilon C} \left( \frac{1}{4 h_*} + w(h_*) \right)
                           \approx 0.311 \frac{\beta}{\epsilon C} \,.
\end{align}
In the last step, we substituted $h_* \approx 0.0906$. Note that a closer result with the prefactor $0.310$ in the above expression follows if we approximate $w(h_*)\approx 1- \ln(2) + \mathcal{O}(h_*)$ for small $h_*$.

We observe that both characteristic times, $\mathcal{T}_\textup{acid}$ and $\mathcal{T}_\textup{basic}$, are primarily governed by the same parameter combination, $\beta / (\epsilon C)$. As a result, both timescales---and thus the total oscillation period---increase as $\epsilon$ decreases. This behavior is qualitatively consistent with the fact that no limit cycle exists in the singular limit $\epsilon = 0$: the motion along the reduced manifold then requires infinite time. Geometrically, this corresponds to the expansion of the limit cycle as $\epsilon \to 0$, with its right boundary (which, in the singular limit, coincides with the acidic fold point, $F_\textup{A}$) shifting toward larger values of $s$. This trend is also evident in Figs.~\ref{fig:scheme}(a) or \ref{fig:final-num}(a), where one should recall that $\sigma = \epsilon s$. Quantitatively, the scaling $\mathcal{T}_\textup{acid}, \mathcal{T}_\textup{basic} \sim \epsilon^{-1}$ is consistent with other GSPT results, such as those obtained for the Brusselator model.\cite{engel:A2023}

The ratio between the two timescales is independent of $\epsilon$ and given by
\begin{equation}
    \frac{\mathcal{T}_\textup{acid}(\epsilon)}{\mathcal{T}_\textup{basic}(\epsilon)} \approx 4h_*w(h_*) \,, \label{eq:rel-times-dimless}
\end{equation}
where $w(h_*)$ is given by \cref{eq:w(h)}. This expression confirms that the motion along the upper branch is considerably faster, provided $h_* \ll 1$. For $h_* \approx 0.0906$ (see details following \cref{eq:fixp}), and thus the GSPT prediction yields $\mathcal{T}_\textup{acid} / \mathcal{T}_\textup{basic} \approx 0.126$. It is important to note, however, that this asymmetry between timescales arises independently of the smallness of $\epsilon$.

\paragraph*{Timescales in physical units}

In dimensionless form, the durations $\mathcal{T}_\textup{acid}$ and $\mathcal{T}_\textup{basic}$ spent along the acidic and basic branches of the limit cycle are given by expressions \cref{eq:T-acid-dimless,eq:T-basic-dimless}. To convert these to physical units, we use the rescaling relation $\epsilon C = \epsilon_1$ from \cref{eq:resc-constants} (see also \cref{sec:realistic-model} and \cref{table:parameters,table:reduced-parameters}). This yields the dimensional timescales:
\begin{align}
    T_\textup{acid} & = \frac{k_\textup{M}}{v_\textup{max}}\mathcal{T}_\textup{acid} \approx
      \frac{k_\textup{M}}{v_\textup{max}} \frac{[\ce{H+}_\textup{ext}]}{k_\textrm{E1}} w(h_*) \,, \label{eq:T-acid-dim} \\
    T_\textup{basic} & = \frac{k_\textup{M}}{v_\textup{max}}\mathcal{T}_\textup{basic} \approx
      \frac{k_\textup{M}}{v_\textup{max}} \frac{[\ce{H+}_\mathrm{ext}]}{k_\mathrm{E1} }  \cfrac{1}{4h_*}\,, \label{eq:T-basic-dim}
\end{align}
where $k_\textup{M}$ and $v_\textup{max}$ are the Michaelis--Menten constants, $k_\textup{E1}$ is the enzymatic rate constant at low pH, the function $w(h)$ is given by \cref{eq:w(h)} and the fixed-point acid concentration of system \cref{eq:rre} reads (cf. \cref{eq:fixp}):
\begin{equation}
  h_* = \frac{[\ce{H+}_*]}{[\ce{H+}_\mathrm{ext}]} = 1-2\frac{k_\textrm{S} [\ce{S}_\mathrm{ext}]}{k_\textrm{H} [\ce{H+}_\mathrm{ext}]}\,.
\end{equation}

In the regime of strong timescale separation, the total oscillation period is generally approximated by the sum $T = T_\textup{acid} + T_\textup{basic}$, which is largely governed by the compound timescale $k_\textup{M} [\ce{H+}_\mathrm{ext}]/(v_\textup{max} k_\mathrm{E1})$. For small values of
$h_*$,
as in our case with $h_* \approx 0.0906$, we find $T_\textup{basic}/T_\textup{acid} = (4 h_* w(h_*))^{-1} \approx 7.96$, see \cref{eq:rel-times-dimless,eq:w(h)}. Thus, the oscillation period is dominated by the slow excursion along the basic branch, so that $T \approx T_\textup{basic}$ as given in \cref{eq:T-basic-dim}; this observation is qualitatively consistent with the numerical data presented in \cref{fig:limit-cycle-start}(d). Generally, with the decrease in $h_*$, $T \approx T_\textup{basic}$ grows as $h_*^{-1}$ and if $h_*$ approaches zero, where oscillations would disappear, the time spent on the lower branch diverges.

\paragraph*{Comparison of analytic predictions with numerical data}

Finally, we compare the asymptotic analytic predictions obtained in the limit of strong timescale separation with numerical results from direct integration of \cref{eq:rre-eps} for $\epsilon = \num{e-5}, \num{e-4}, \num{e-3}$, using the parameter values listed in \cref{table:reduced-parameters}. The timescales spent along the slow branches in the acidic and basic domains, denoted by $\mathcal{T}_\textup{acid}(\epsilon)$ and $\mathcal{T}_\textup{basic}(\epsilon)$, are given analytically in \cref{eq:T-acid-dimless,eq:T-basic-dimless}, respectively. As previously discussed, the total oscillation period excludes the fast transitions and is approximated by $\mathcal{T}(\epsilon) \approx \mathcal{T}_\textup{acid}(\epsilon) + \mathcal{T}_\textup{basic}(\epsilon)$. From the numerical simulations, we estimate the corresponding timescales as the durations spent traveling from $F_\textup{B}$ to $F_\textup{A}$ and vice versa, denoted by $\tau_{\textup{B}\to \textup{A}}$ and $\tau_{\textup{A}\to \textup{B}}$, respectively. The full oscillation period is thus exactly $\tau = \tau_{\textup{B}\to \textup{A}} + \tau_{\textup{A}\to \textup{B}}$. The results, including the relative durations in each domain, are summarized in \cref{table:timescales}. All timescales are presented in dimensionless units; physical times (in seconds) can be obtained by dividing by the rate constant $k_\textup{max} \approx \SI{6.17e-2}{\per\second}$ (see \cref{table:parameters}).


\begin{table}[b]
\centering
\setlength{\tabcolsep}{7pt}
\begin{tabular}{lcccccccc}
  \toprule
  $\epsilon$ & $\mathcal{T}_\textup{acid}$ & $\mathcal{\tau}_{\textup{B}\to \textup{A}}$ & $\mathcal{T}_\textup{basic}$ &
  $\mathcal{\tau}_{\textup{A}\to \textup{B}}$ & $\mathcal{T}$ & $\mathcal{\tau}$ & $\mathcal{T}_\textup{basic}$/$\mathcal{T}_\textup{acid}$ & 
  $\mathcal{\tau}_{\textup{B}\to \textup{A}}/\mathcal{\tau}_{\textup{A}\to \textup{B}}$\\
  \midrule
  \num{e-5} & 900.95  &  \num{975}  &  7174.5  &  \num{7.48e3}  &  8075.5  & \num{8.46e3} &  7.9632  &  7.67  \\  
  \num{e-4} & 90.095  &  \num{120}  &  717.45  &  825           &  807.55  &  945         &  7.9632  &  6.88  \\
  \num{e-3} & 9.0095  &  \num{23.0} &  71.745  &  67.2          &  80.755  &  90.2        &  7.9632  &  2.92  \\
  \bottomrule
\end{tabular}
\caption{Summary of the timescales obtained analytically and from numerical simulations for $\epsilon=\num{e-5}, \num{e-4}, \num{e-3}$. The values of $\mathcal{T}_\textup{acid}$ and $\mathcal{T}_\textup{basic}$ are given by \cref{eq:T-acid-dimless,eq:T-basic-dimless} with $\mathcal{T} = \mathcal{T}_\textup{acid} + \mathcal{T}_\textup{basic}$. The data correspond to numerical solution of \cref{eq:rre-eps} and is identical to those in \cref{fig:final-num}(a); all other parameters are the same as in \cref{fig:limit-cycle-start,table:reduced-parameters}. The values of $\mathcal{\tau}_{\textup{B}\to \textup{A}}$ and $\mathcal{\tau}_{\textup{A}\to \textup{B}}$ corresponds to the time spent to move from points $F_\textup{B}$ to $F_\textup{A}$ and $F_\textup{A}$ to $F_\textup{B}$, respectively, and $\tau = \tau_{\textup{B}\to \textup{A}} + \tau_{\textup{A}\to \textup{B}}$.
}.
\label{table:timescales}
\end{table}


For smaller values of $\epsilon$ ($\epsilon = \num{e-5}, \num{e-4}$), the numerical results agree reasonably well with the theoretical predictions, with closer agreement for $\epsilon = \num{e-5}$ and slightly larger deviations at $\epsilon = \num{e-4}$. In both cases, the numerically estimated timescales $\tau_{\textup{B}\to \textup{A}}$ and $\tau_{\textup{A}\to \textup{B}}$ slightly overestimate the analytic values $\mathcal{T}_\textup{acid}$ and $\mathcal{T}_\textup{basic}$, respectively. The deviations are smaller for the basic segment. This discrepancy is expected because the analytic estimates capture only the slow drift along the critical manifold, whereas the numerical values include both slow and fast phases of motion. Consistent with theory, the ratio of time spent in the basic domain to that in the acidic domain remains nearly independent of $\epsilon$, with the theoretical value being approximately $8$. From the numerical data, we find this ratio to be $7.7$ for $\epsilon = \num{e-5}$ and $6.9$ for $\epsilon = \num{e-4}$.

For $\epsilon = \num{e-3}$, corresponding to the original system given by \cref{eq:rre}, the agreement between numerical and analytic values becomes less accurate. The time $\tau_{\textup{B}\to \textup{A}} \approx 23$ significantly overestimates the analytic value $\mathcal{T}_\textup{acid} \approx 9$, while $\tau_{\textup{A}\to \textup{B}} \approx 67$ undershoots $\mathcal{T}_\textup{basic} \approx 81$. As a result, the ratio $\tau_{\textup{A}\to \textup{B}} / \tau_{\textup{B}\to \textup{A}} \approx 3$ deviates substantially from the theoretical prediction $\mathcal{T}_\textup{basic} / \mathcal{T}_\textup{acid} \approx 8$. We attribute these larger discrepancies to the fact that the limit cycle for $\epsilon = \num{e-3}$ deviates significantly from the singular cycle, as illustrated in \cref{fig:final-num}.

\subsection{Differential transport across the membrane required for oscillations}

Our study, including its geometric decomposition, not only provides a clear explanation for the existence and structure of the limit cycle but also uncovers additional insights into the conditions supporting oscillations. Among these, we identify a refined constraint for their emergence, expressed as
$\alpha K_h > K_s$ (see \cref{rem:aK}). In terms of the original parameters, this translates to
\begin{align}
    \frac{k_\textup{H}}{k_\textup{S}} > 2 \frac{[\ce{S}_\mathrm{ext}]}{[\ce{H+}_\mathrm{ext}]}\,, \label{eq:diff-transp}    
\end{align}
where $k_\textup{H}$, $k_\textup{S}$ are transport rates across the membrane of the acid \ce{H+} and substrate \ce{S}, and $[\ce{S}_\textup{ext}]$, $[\ce{H+}_\textup{ext}]$ are their external concentrations. Condition \eqref{eq:diff-transp},
which can be interpreted as a constraint on the differential transport, critical for sustained oscillatory behavior,
sharpens requirements from prior studies \cite{bansagi:JPCB2014}, such as $ {k_\textup{H}}/{k_\textup{S}} > 1$, by explicitly linking transport rate asymmetry to external concentration ratios. Physically, this refined condition essentially imposes a condition on the relative strength of fluxes, namely: $k_\textup{H} [\ce{H+}_\mathrm{ext}] > 2 k_\textup{S} [\ce{S}_\mathrm{ext}]$.

For the stability diagram plotted in the coordinates ($k_\textup{H}/k_\textup{S}$, $[\ce{S}_\mathrm{ext}]/[\ce{H+}_\mathrm{ext}]$) -- as in \cref{fig:stability} and in Fig. 6 of Ref.~\citenum{straube:JPCB2023} -- the condition in \cref{eq:diff-transp} defines the parameter domain above the line $k_\textup{H}/k_\textup{S} = 2 [\ce{S}_\mathrm{ext}]/[\ce{H+}_\mathrm{ext}]$ (which corresponds to $K_h/K_s = \alpha^{-1}$ in \cref{fig:stability}). This inequality thus sets a lower bound for the domain of oscillations in the present reduced model, as oscillations can occur only in the parameter region above this line, where differential transport across the membrane is sufficiently strong. This analytical prediction aligns well with the numerically determined oscillatory domain reported in Ref.~\citenum{straube:JPCB2023} (cf. their Fig. 6). Notably, the numerical data from that study further suggest that \cref{eq:diff-transp} must be satisfied together with the condition $[\ce{S}_\textup{ext}] \gtrapprox [\ce{H+}_\textup{ext}]$ to support sustained oscillations.

\section{Conclusion} \label{sec:conclusion}

In this work, we have uncovered the geometric structure underlying the oscillatory dynamics of a biochemical pH oscillator,
namely, the urea–urease reaction network confined to a lipid vesicle \cite{bansagi:JPCB2014,muzika:PCCP2019,miele:JPCL2022}.
Our analysis has built on a simple yet realistic model in two variables \cite{straube:JPCB2023}
and has used geometric singular perturbation theory (GSPT) as the central mathematical tool, providing us with a natural framework for resolving the fast–slow structure of the emerging limit cycle.

Despite the inherent presence of distinct fast and slow processes, the model lacks a single dominant small parameter that would directly control the timescale separation. Instead, it is controlled by several small dimensionless parameter combinations, none of which suffices to characterize the system’s multiscale behavior. We have overcome this challenge by a suitable formal coupling of such combinations, which enables the application of classical GSPT techniques.

Using this framework, we have identified distinct critical manifolds corresponding to the slow segments of the limit cycle---one in the acidic region, the other in the basic region. To resolve their local structures, we have introduced two separate rescalings, each tailored to the respective pH domain. This approach enables a detailed analysis of the system near the associated fold points, where normal hyperbolicity is lost. In particular, we have proven that these points are generic folds rather than canards, contrary to an earlier speculation \cite{straube:JPCL2021}. A generalized GSPT, extended to handle degenerate equilibria \cite{krupa:SIAM-JMA2001}, has allowed us to rigorously track the dynamics through these non-hyperbolic regions. Finally, we have assembled a global picture of the oscillatory mechanism by matching the local analyses through suitably constructed transition maps.

One may also refrain from coupling the parameter combinations and work directly with two small parameters $\epsilon_1$ and $\epsilon_2$. This would induce the analysis of various regimes, where one of the parameters is small but fixed and the other is sent to $0$. Note that this may uncover even more subtle substructures, as demonstrated for the peroxidase-oxidase reaction via a multiscale analysis of the Olsen model \cite{kuehn:JNS2015}. One could then also analyse a double-singular limit when $\epsilon_1, \epsilon_2 \to 0$, see also Ref.~\citenum{kuehn:PD2022}. For this work, we have focused on the main structure of the fast-slow limit cycle, yielding a first approximation within a simple coherent framework.

In parallel, there is a strong motivation to
 extend the present analysis of the deterministic dynamics to stochastic regimes.
In biochemical systems, molecular fluctuations are inevitable and become increasingly pronounced as the system size decreases.
Intrinsic noise, arising from the discreteness and finite copy numbers of molecules, is known to modify the dynamics of monostable enzymatic reaction networks\cite{thomas:JCP2010,thomas:BMCSB2012}, and can even lead to noise-induced oscillations\cite{thomas:JTB2013}. In oscillatory systems, such noise effects can become especially pronounced near singularities such as fold points, where the system's sensitivity to perturbations increases \cite{Engel:CM2025}.
Indeed, an earlier numerical study by some of us \cite{straube:JPCL2021} within the framework of the chemical master equation \cite{winkelmann2020stochastic} demonstrated that periodic oscillations become increasingly irregular as noise intensifies, due to a decrease of vesicle size, eventually leading to a breakdown of rhythmicity. This suggests that the geometric structure identified through GSPT in the deterministic setting may continue to shape the dynamics as long as the noise is moderate. Supporting this idea, prior theoretical work has shown that slow manifolds persist in a probabilistic sense, and deviations due to noise can be quantified precisely \cite{berglund2006noise}.

\acknowledgements 

This research has been supported by the Deutsche Forschungsgemeinschaft
(DFG) under Germany’s Excellence Strategy -- MATH+: The Berlin Mathematics Research Center (EXC-2046/1) -- Project No. 390685689 (Subproject AA1-18), in the case of M.E. also via Subprojects AA1-8 and EF45-5.
Furthermore, M.E. thanks the DFG CRC 1114, the Einstein Foundation and the Dutch Research Council NWO (VI.Vidi.233.133) for support.

\appendix

\section{Connection with a realistic model of the urea--urease reaction} \label{sec:realistic-model}

In the following, we the relate the simplified model \cref{eq:rre} and the model derived in Ref.~\citenum{straube:JPCB2023},
which is an already reduced, yet reliable model of the full urea--urease reaction network \cite{bansagi:JPCB2014}.

Let $[\ce{S}]$ and $[\ce{H+}]$ denote the time-dependent concentrations substrate molecules (urea) and hydrogen ions within the vesicle and $[\ce{S}_\textrm{ext}]$ and $[\ce{H+}_\textrm{ext}]$ refer to their constant values in the vesicle's exterior, which acts as a reservoir.
In terms of the dimensionless variables $s=[\ce{S}]/[\ce{S}_\textrm{ext}]$ and $h=[\ce{H+}]/[\ce{H+}_\textrm{ext}]$, the reaction kinetics of the urease-catalyzed hydrolysis of urea was shown to obey the dynamical system  \cite{straube:JPCB2023}
\begin{subequations} \label{eq:rre-jpcb}
	\begin{align}
	\frac{ds}{dt} & = -k_\mathrm{cat}(s,h) \, s + k_\mathrm{S}\, (1-s)\,,  \label{eq:rre-jpcb-s} \\
	\frac{dh}{dt} & = -k \,p(s,h) \,h + k_\mathrm{H}\,(1-h)\,, \label{eq:rre-jpcb-h}
	\end{align}
\end{subequations}
for rate constants $k_\mathrm{S},k_\mathrm{H},k>0$. The effective rate $k_\mathrm{cat}(s,h) > 0$ expresses Michaelis--Menten kinetics with a pH-dependent reaction velocity and has the product form
\begin{align}
k_\mathrm{cat}(s,h)
& = k_\mathrm{cat}^\mathrm{M}(s [\ce{S}_\textrm{ext}]) \cdot f_\mathrm{H}(h[\ce{H+}_\textrm{ext}]) \label{eq:kcat-dim}
\end{align}
with
\begin{equation}
k_\mathrm{cat}^\mathrm{M}([\ce{S}]) = \frac{v_\mathrm{max} } {
	k_\mathrm{M}+[\ce{S}]}
\quad \text{and} \quad
f_\mathrm{H}([\ce{H+}]) 
    = \frac{1}{1 + [\ce{H+}]/k_\mathrm{E1} + k_\mathrm{E2} / [\ce{H+}]} \,, \label{eq:kcat-f(h)-dim}
\end{equation}
where $v_\textrm{max}$ is the maximum reaction speed, $k_\textrm{M}$ the Michaelis--Menten constant, and $k_\textrm{E1}$ and $k_\textrm{E2}$ are enzyme-specific \ce{H+} concentrations. The bell-shaped function $f_\mathrm{H}$ attains its maximum at $[\ce{H+}]_\textup{max} = (k_\textrm{E1} k_\textrm{E2})^{1/2}$ and its width scales as the ratio $\beta^{-2}:=k_\textrm{E1} / k_\textrm{E2}$.
The function $p(s,h) \geq 0$ appearing in \cref{eq:rre-jpcb-h} reads
\begin{align}
p(s,h)  & = -\frac{b(h)}{2} + \frac{1}{2}\sqrt{b(h)^2+4c(s,h)}\, \label{eq:p-jpcb}
\end{align}
with the abbreviations
\begin{subequations} \label{eq:cffs-bc-jpcb}
  \begin{align}
b(h) &:= 1 + k' [\ce{H+}_\textrm{ext}]h + ( 1 - h^{-1} ) \, k_\mathrm{H} /k \,,  \label{eq:cffs-bc-jpcb-b} \\
    c(s,h) &:= 2 \,k_\mathrm{cat}(s,h) k' [\ce{S}_\textrm{ext}] s / k \ge 0\,,  \label{eq:cffs-bc-jpcb-c}
  \end{align}
\end{subequations}
and a constant $k'>0$.

For the parameters of the model, \cref{eq:rre-jpcb,eq:kcat-dim,eq:kcat-f(h)-dim,eq:p-jpcb,eq:cffs-bc-jpcb}, we use the same values as in Ref.~\citenum{straube:JPCB2023}, which are listed in \cref{table:parameters}.
This choice of parameters introduces a timescale separation into the model, which we exploit in this work to develop the GSPT analysis. Additionally, it allows for some \emph{a priori} simplifications of the functional dependencies.


\begin{table}[b]
\centering
\begin{tabular}{lcr@{\,}l}
  \toprule
  Parameter & Symbol & \multicolumn{2}{c}{Value}  \\
  \midrule
  external urea concentration & $[\ce{S}_\mathrm{ext}]$  &  \num{3.8e-4} & \si{M} \\
  external proton concentration & $[\ce{H+}_\mathrm{ext}]$ & \num{1.3e-4} & \si{M} \\
  \midrule
  maximum reaction speed  & $v_\mathrm{max}$ & \num{1.85e-4} & \si{M\, \s^{-1}} \\
  Michaelis-Menten constant &  $k_\mathrm{M}$ & \num{3e-3} & \si{M} \\
  enzyme constant (low pH) & $k_\mathrm{E1}$ & \num{5e-6} & \si{M} \\
  enzyme constant (high pH) & $k_\mathrm{E2}$ & \num{2e-9} & \si{M} \\
  \midrule
  ammonia protonation  rate  & $k_2$  & \num{4.3e10} &
  \si{\Molar\tothe{-1}\s\tothe{-1}}   \\
  ammonium deprotonation rate  & $k_{2r}$  & \num{2.4e1}~ & \si{\s\tothe{-1}}   \\
  \midrule
  proton transport rate  & $k_\mathrm{H}$ & \num{9e-3} & \si{\s\tothe{-1}} \\
  urea transport rate & $k_\mathrm{S}$ & \num{1.4e-3} & \si{\s\tothe{-1}} \\
  ammonia outflow rate  & $k$ &  $=k_\mathrm{S}$  \\
  ammonium ion outflow rate  & $k_+$ &  $=k_\mathrm{S}$ \\
  \midrule
  $k_2 / (k_{2r}+k_+) = $  & $k'$ & \num{1.79e9} & \si{\Molar\tothe{-1}} \\
  $v_\textrm{max} / k_\textrm{M} = $  & $k_\textrm{max}$ & \num{6.17e-2} & \si{s\tothe{-1}} \\
  \bottomrule
\end{tabular}
\caption{Parameter values of the real-world chemical system for substance concentrations, enzyme properties, reaction rates, rates of differential transport across the vesicle membrane, and derived quantities.}
\label{table:parameters}
\end{table}

\begin{table}
\centering
\begin{tabular}{c@{\:$=$\:}c@{\hspace{1em}}r@{\;}}
  \toprule
  $K_h$ & $k_\textrm{H}/k_\textrm{max}$ & \num{0.15} \\
  $K_s$ & $k_\textrm{S}/k_\textrm{max}$ & \num{0.023} \\
  $K$ & $k/k_\textrm{max}$ & \num{0.023} \\
 \midrule
  $\alpha$ & $ [\ce{H+}_\mathrm{ext}] \big/ (2[\ce{S}_\mathrm{ext}])$ &  0.17 \\
  $\beta$ & $\sqrt{k_\mathrm{E2} / k_\mathrm{E1}}$ &  \num{0.02 } \\ 
  $\epsilon_1$ & $\sqrt{k_\mathrm{E1} k_\mathrm{E2}} \, \big/ \, [\ce{H+}_\mathrm{ext}]$ & \num{7.7e-4} \\ 
  $\epsilon_2$ & $\alpha / (k'[\ce{H+}_\mathrm{ext}])$ & \num{7.3e-7} \\ 
 \bottomrule
\end{tabular}\caption{Dimensionless parameter combinations of model \eqref{eq:rre}.
}
\label{table:reduced-parameters}
\end{table}


To obtain the simplified model of the main text, \cref{eq:rre} together with \cref{eq:fun-r(h),eq:fun-q(s-h),eq:v(h)}, we start by noting that \cref{eq:rre-jpcb-s,eq:rre-jpcb-h} imply bounded solutions $h(t), s(t) \in [0,1]$. This follows from inspection of the signs of the r.h.s.\ of both equations and provided that the initial conditions satisfy these bounds.
Moreover, the numerical solution for the limit cycle (Ref.~\citenum{straube:JPCB2023} and \cref{fig:limit-cycle-start})
indicates that $s \lesssim 0.25$.
Hence, there is a scale separation $s \ll k_\textup{M} / [\ce{S}_\textrm{ext}] \approx 7.9$
or, equivalently, for the substrate concentration, $[\ce{S}] \ll k_\textup{M}$.
It allows us to approximately treat the Michaelis--Menten kinetics as a first-order (linear) reaction and to replace the catalytic rate by a constant, $k_\mathrm{cat}^\mathrm{M}([\ce{S}]) \approx v_\textrm{max} / k_\textrm{M} =: k_\textrm{max}$.
Additionally, the substrate concentration, over large parts of the limit cycle (cf.~\cref{fig:limit-cycle-start}), is much smaller inside of the vesicle than in its exterior, $s \ll 1$ or $[\ce{S}] \ll [\ce{S}_\textup{ext}]$, which suggests approximating the last term in \cref{eq:rre-jpcb-s} by a constant, $k_\mathrm{S}(1-s) \approx k_\mathrm{S}$, thereby simplifying the present analysis.
Regarding the function $b(h)$, we note that 
$k' [\ce{H+}_\textrm{ext}] \approx \num{2.3e5}$
is a large constant and thus 
$1 + h k' [\ce{H+}_\textrm{ext}] \approx h k' [\ce{H+}_\textrm{ext}]$,
except for tiny values of $h$,
namely, for very low proton concentration $[\ce{H+}] \lesssim 1/k'$ in the basic regime, $\mathrm{pH} \gtrsim 9.3$,
which is not reached by the limit cycle (\cref{fig:limit-cycle-start}d).

In summary, provided that the solutions obey $0 \leq s \ll \min(k_\textup{M} / [\ce{S}_\textrm{ext}], 1)$ and
$(k' [\ce{H+}_\textrm{ext}])^{-1} \ll h \leq 1$, \cref{eq:rre-jpcb} can be replaced by
\begin{subequations}
    \begin{align}
        \frac{ds}{dt} & = - k_\mathrm{max} f_\mathrm{H}(h[\ce{H+}_\textrm{ext}]) \, s + k_\mathrm{S} \,, \\
        \frac{dh}{dt} & = -k \,p(s,h) \,h + k_\mathrm{H}\,(1-h)\,.
    \end{align}
\end{subequations}
These equations coincide with the dynamical system \cref{eq:rre} written in terms of the rescaled time $t' = k_\mathrm{max} t$ (and subsequently omitting the prime) and upon introducing dimensionless parameters
$K_s := k_\mathrm{S}/k_\mathrm{max}$, $K_h := k_\mathrm{H}/k_\mathrm{max}$, and $K := k/k_\mathrm{max}$
and the functions given in \cref{eq:fun-r(h),eq:fun-q(s-h),eq:v(h)}:
\begin{align}
    r(h) &:= f_\mathrm{H}(h[\ce{H+}_\textrm{ext}]), &
    q(s,h) &:= K p(s,h) h, \quad \text{and} &
    v(h) &:= K b(h) h
\end{align}
with the parameters
\begin{subequations}
\begin{align}
    \beta &:= \sqrt{ k_\textrm{E2} / k_\textrm{E1}}, &
    \alpha &:= ~ [\ce{H+}_\mathrm{ext}] / (2 [\ce{S}_\mathrm{ext}] ) \,,  \\
    \epsilon_1 &:= \sqrt{k_\textrm{E1} k_\textrm{E2}} \big/ [\ce{H+}_\textrm{ext}], &
    \epsilon_2 &:= 1/ (2 k' [\ce{S}_\mathrm{ext}] ) \,.
\end{align} \label{def:param}
\end{subequations}
We note that, in chemistry, one often uses logarithmic variables (\ce{pS}, \ce{pH}) instead of ($s, h$), which are related to each other through
\begin{subequations}
\begin{alignat}{4}
    \ce{pS} &:= -\log_{10}([\ce{S}] / 1\,{\rm M}) && = -\log_{10}(s) - \log_{10}([\ce{S}_\mathrm{ext}] / 1\,{\rm M}) , \label{eq:logS} \\
    \ce{pH} &:= -\log_{10}([\ce{H+}] / 1\,{\rm M}) && = -\log_{10}(h) - \log_{10}([\ce{H+}_\mathrm{ext}] / 1\,{\rm M}) \,. \label{eq:logH}
\end{alignat} \label{eq:logSH}
\end{subequations}

\Cref{table:reduced-parameters} lists the values of the above parameter combinations,
for which the reduced and simplified system \eqref{eq:rre} exhibits a limit cycle, shown in \cref{fig:limit-cycle-start}.
Previous work \cite{straube:JPCB2023} and \cref{fig:stability} show that the existence of a limit cycle sensitively depends on the ratio $\alpha$ of the external concentrations, which we keep fixed here.
From \cref{table:reduced-parameters}, one identifies $\epsilon_1$ as a small parameter of the system; this ratio specifies by how much the external pH value deviates from the point, where the catalytic reaction velocity $\propto f_\mathrm{H}$ attains its maximum, which is at $\ce{pH}=7$.
Another small parameter is $\epsilon_2$, which is inversely proportional to the large parameter combination $k' [\ce{H+}_\textrm{ext}] = \alpha/\epsilon_2$ mentioned above.
The latter is the ratio between the \ce{H+} concentration in the exterior of the vesicle and the effective equilibrium constant $1/k'$ of the \ce{NH_3 + H+ <=> NH_4+} conversion reaction,
including the small outflow of \ce{NH_4+} across the membrane.
For the values given in \cref{table:reduced-parameters}, one finds that $\epsilon_1^2$ and $\epsilon_2$ are of similar order of magnitude:
$\epsilon_1^2 / \epsilon_2 \approx 0.81\,.$

\section{A primer on the analysis of fold points in GSPT}
\label{rem:gen-fold}

Here, we summarize the standard conditions that characterize \textit{fold points} and distinguish \textit{generic folds}.\cite{kuehn:book2015} Consider the slow time formulation of a fast–slow system,
$$
\dot{x} = f(x,y), \quad \epsilon \dot{y} = g(x,y) \quad (0 < \epsilon \ll 1)\,,
$$
where $x$ and $y$ denote the slow and fast variables, respectively, and $\epsilon$ is a small parameter separating their timescales.
A point $P:=(x_0,y_0)$ on the critical manifold $$\mathcal{C} := \{ (x,y) : g(x,y)=0 \}$$ is called a \textit{generic fold point} if the following conditions hold:
\begin{enumerate}
    \item \textit{Fold point conditions:}
    $$ g(P) = 0, \quad \partial_y g(P) = 0\,, $$
    which imply that $P$ lies on the critical manifold $\mathcal{C}$ and that the fast subsystem has a singularity at $P$. In particular, $\partial_y g(P) = 0$ corresponds to the loss of normal hyperbolicity of $\mathcal{C}$ at $P$, since the linearization of the fast subsystem degenerates.  

    \item \textit{Nondegeneracy conditions:}
    $$  \partial_y^2 g(P) \neq 0, \quad \partial_x g(P) \neq 0\,, $$
    where $\partial_y^2 g(P) \neq 0$ ensures that the fold is quadratic (and therefore nondegenerate) in the fast variable,
    and $\partial_x g(P) \neq 0$ guarantees that the critical manifold varies regularly with the slow variable, preventing degeneracies also in the reduced (slow) flow.

    \item \textit{Transversality condition:} 
    $$  f(P) \neq 0\,, $$
    which ensures that the slow flow crosses the fold transversally, allowing trajectories to pass between attracting and repelling branches of the critical manifold.
    Together, conditions (i)-–(iii) characterize $P$ as a \textit{generic fold point}, a structurally stable and nondegenerate singularity of the critical manifold where normal hyperbolicity is lost in a controlled manner. Away from fold points, where $\partial_y g \neq 0$, the critical manifold is normally hyperbolic. \\ 
\end{enumerate}

If any of the conditions (i)–-(iii) fails, the point 
$P$ is a \textit{nongeneric fold point}. In particular, if the transversality condition (iii) is violated so that $f(P) = 0$ 
then $P$ is called a \textit{folded singularity}. Such points represent a degeneracy where the slow flow is tangent to the fold, and this tangency is a necessary (though generally not sufficient) condition for the existence of \emph{canard} solutions\cite{benoit:CM1981,krupa:JDE2001,kuehn:book2015}---special trajectories that follow both attracting and repelling branches of the critical manifold for significant time intervals.

\bibliography{references}

\begin{thebibliography}{59}%
\makeatletter
\providecommand \@ifxundefined [1]{%
 \@ifx{#1\undefined}
}%
\providecommand \@ifnum [1]{%
 \ifnum #1\expandafter \@firstoftwo
 \else \expandafter \@secondoftwo
 \fi
}%
\providecommand \@ifx [1]{%
 \ifx #1\expandafter \@firstoftwo
 \else \expandafter \@secondoftwo
 \fi
}%
\providecommand \natexlab [1]{#1}%
\providecommand \enquote  [1]{``#1''}%
\providecommand \bibnamefont  [1]{#1}%
\providecommand \bibfnamefont [1]{#1}%
\providecommand \citenamefont [1]{#1}%
\providecommand \href@noop [0]{\@secondoftwo}%
\providecommand \href [0]{\begingroup \@sanitize@url \@href}%
\providecommand \@href[1]{\@@startlink{#1}\@@href}%
\providecommand \@@href[1]{\endgroup#1\@@endlink}%
\providecommand \@sanitize@url [0]{\catcode `\\12\catcode `\$12\catcode
  `\&12\catcode `\#12\catcode `\^12\catcode `\_12\catcode `\%12\relax}%
\providecommand \@@startlink[1]{}%
\providecommand \@@endlink[0]{}%
\providecommand \url  [0]{\begingroup\@sanitize@url \@url }%
\providecommand \@url [1]{\endgroup\@href {#1}{\urlprefix }}%
\providecommand \urlprefix  [0]{URL }%
\providecommand \Eprint [0]{\href }%
\providecommand \doibase [0]{https://doi.org/}%
\providecommand \selectlanguage [0]{\@gobble}%
\providecommand \bibinfo  [0]{\@secondoftwo}%
\providecommand \bibfield  [0]{\@secondoftwo}%
\providecommand \translation [1]{[#1]}%
\providecommand \BibitemOpen [0]{}%
\providecommand \bibitemStop [0]{}%
\providecommand \bibitemNoStop [0]{.\EOS\space}%
\providecommand \EOS [0]{\spacefactor3000\relax}%
\providecommand \BibitemShut  [1]{\csname bibitem#1\endcsname}%
\let\auto@bib@innerbib\@empty
\bibitem [{\citenamefont {Epstein}(1991)}]{epstein:PD1991}%
  \BibitemOpen
  \bibfield  {author} {\bibinfo {author} {\bibfnamefont {I.~R.}\ \bibnamefont
  {Epstein}},\ }\bibfield  {title} {\enquote {\bibinfo {title} {Nonlinear
  oscillations in chemical and biological systems},}\ }\href
  {https://doi.org/10.1016/0167-2789(91)90228-2} {\bibfield  {journal}
  {\bibinfo  {journal} {Physica D}\ }\textbf {\bibinfo {volume} {51}},\
  \bibinfo {pages} {152--160} (\bibinfo {year} {1991})}\BibitemShut {NoStop}%
\bibitem [{\citenamefont {Goldbeter}(1996)}]{goldbeter:book1996}%
  \BibitemOpen
  \bibfield  {author} {\bibinfo {author} {\bibfnamefont {A.}~\bibnamefont
  {Goldbeter}},\ }\href@noop {} {\emph {\bibinfo {title} {Biochemical
  Oscillations and Cellular Rhythms}}}\ (\bibinfo  {publisher} {Cambridge
  University Press},\ \bibinfo {address} {Cambridge},\ \bibinfo {year}
  {1996})\BibitemShut {NoStop}%
\bibitem [{\citenamefont {Epstein}\ and\ \citenamefont
  {Pojman}(1998)}]{epstein:book1998}%
  \BibitemOpen
  \bibfield  {author} {\bibinfo {author} {\bibfnamefont {I.~R.}\ \bibnamefont
  {Epstein}}\ and\ \bibinfo {author} {\bibfnamefont {J.~A.}\ \bibnamefont
  {Pojman}},\ }\href {https://doi.org/10.1093/oso/9780195096705.001.0001}
  {\emph {\bibinfo {title} {An Introduction to Nonlinear Chemical Dynamics:
  Oscillations, Waves, Patterns, and Chaos}}}\ (\bibinfo  {publisher} {Oxford
  University Press},\ \bibinfo {address} {Oxford},\ \bibinfo {year}
  {1998})\BibitemShut {NoStop}%
\bibitem [{\citenamefont {Nov{\'{a}}k}\ and\ \citenamefont
  {Tyson}(2008)}]{novak:NRMCB2008}%
  \BibitemOpen
  \bibfield  {author} {\bibinfo {author} {\bibfnamefont {B.}~\bibnamefont
  {Nov{\'{a}}k}}\ and\ \bibinfo {author} {\bibfnamefont {J.~J.}\ \bibnamefont
  {Tyson}},\ }\bibfield  {title} {\enquote {\bibinfo {title} {Design principles
  of biochemical oscillators},}\ }\href {https://doi.org/10.1038/nrm2530}
  {\bibfield  {journal} {\bibinfo  {journal} {Nat. Rev. Mol. Cell Biol.}\
  }\textbf {\bibinfo {volume} {9}},\ \bibinfo {pages} {981--991} (\bibinfo
  {year} {2008})}\BibitemShut {NoStop}%
\bibitem [{\citenamefont {De~Kepper}, \citenamefont {Boissonade},\ and\
  \citenamefont {Szalai}(2009)}]{deKepper:book2009}%
  \BibitemOpen
  \bibfield  {author} {\bibinfo {author} {\bibfnamefont {P.}~\bibnamefont
  {De~Kepper}}, \bibinfo {author} {\bibfnamefont {J.}~\bibnamefont
  {Boissonade}},\ and\ \bibinfo {author} {\bibfnamefont {I.}~\bibnamefont
  {Szalai}},\ }\bibfield  {title} {\enquote {\bibinfo {title} {From sustained
  oscillations to stationary reaction-diffusion patterns},}\ }in\ \href
  {https://doi.org/10.1007/978-90-481-2993-5_1} {\emph {\bibinfo {booktitle}
  {Chemomechanical Instabilities in Responsive Materials}}},\ \bibinfo {editor}
  {edited by\ \bibinfo {editor} {\bibfnamefont {P.}~\bibnamefont {Borckmans}},
  \bibinfo {editor} {\bibfnamefont {P.}~\bibnamefont {De~Kepper}}, \bibinfo
  {editor} {\bibfnamefont {A.~R.}\ \bibnamefont {Khokhlov}},\ and\ \bibinfo
  {editor} {\bibfnamefont {S.}~\bibnamefont {M{\'e}tens}}}\ (\bibinfo
  {publisher} {Springer Netherlands},\ \bibinfo {year} {2009})\ pp.\ \bibinfo
  {pages} {1--37}\BibitemShut {NoStop}%
\bibitem [{\citenamefont {Yashin}\ \emph {et~al.}(2012)\citenamefont {Yashin},
  \citenamefont {Kuksenok}, \citenamefont {Dayal},\ and\ \citenamefont
  {Balazs}}]{yashin:RPP2012}%
  \BibitemOpen
  \bibfield  {author} {\bibinfo {author} {\bibfnamefont {V.~V.}\ \bibnamefont
  {Yashin}}, \bibinfo {author} {\bibfnamefont {O.}~\bibnamefont {Kuksenok}},
  \bibinfo {author} {\bibfnamefont {P.}~\bibnamefont {Dayal}},\ and\ \bibinfo
  {author} {\bibfnamefont {A.~C.}\ \bibnamefont {Balazs}},\ }\bibfield  {title}
  {\enquote {\bibinfo {title} {Mechano-chemical oscillations and waves in
  reactive gels},}\ }\href {https://doi.org/10.1088/0034-4885/75/6/066601}
  {\bibfield  {journal} {\bibinfo  {journal} {Rep. Progr. Phys.}\ }\textbf
  {\bibinfo {volume} {75}},\ \bibinfo {pages} {066601} (\bibinfo {year}
  {2012})}\BibitemShut {NoStop}%
\bibitem [{\citenamefont {G\"oth}\ and\ \citenamefont
  {Dzubiella}(2025)}]{goeth:CP2025}%
  \BibitemOpen
  \bibfield  {author} {\bibinfo {author} {\bibfnamefont {N.}~\bibnamefont
  {G\"oth}}\ and\ \bibinfo {author} {\bibfnamefont {J.}~\bibnamefont
  {Dzubiella}},\ }\bibfield  {title} {\enquote {\bibinfo {title} {Collective
  chemo-mechanical oscillations and cluster waves in communicating colloids},}\
  }\href {https://doi.org/10.1038/s42005-025-01983-9} {\bibfield  {journal}
  {\bibinfo  {journal} {Commun. Phys.}\ }\textbf {\bibinfo {volume} {8}},\
  \bibinfo {pages} {65} (\bibinfo {year} {2025})}\BibitemShut {NoStop}%
\bibitem [{\citenamefont {Goldbeter}(1993)}]{goldbeter:FEBSL1993}%
  \BibitemOpen
  \bibfield  {author} {\bibinfo {author} {\bibfnamefont {A.}~\bibnamefont
  {Goldbeter}},\ }\bibfield  {title} {\enquote {\bibinfo {title} {Oscillatory
  enzyme reactions and {M}ichaelis--{M}enten kinetics},}\ }\href
  {https://doi.org/10.1016/j.febslet.2013.07.031} {\bibfield  {journal}
  {\bibinfo  {journal} {FEBS Letters}\ }\textbf {\bibinfo {volume} {587}},\
  \bibinfo {pages} {2778--2784} (\bibinfo {year} {1993})}\BibitemShut {NoStop}%
\bibitem [{\citenamefont {Barkai}\ and\ \citenamefont
  {Leibler}(1997)}]{barkai:N1997}%
  \BibitemOpen
  \bibfield  {author} {\bibinfo {author} {\bibfnamefont {N.}~\bibnamefont
  {Barkai}}\ and\ \bibinfo {author} {\bibfnamefont {S.}~\bibnamefont
  {Leibler}},\ }\bibfield  {title} {\enquote {\bibinfo {title} {Robustness in
  simple biochemical networks},}\ }\href {https://doi.org/10.1038/43199}
  {\bibfield  {journal} {\bibinfo  {journal} {Nature (London)}\ }\textbf
  {\bibinfo {volume} {387}},\ \bibinfo {pages} {913--917} (\bibinfo {year}
  {1997})}\BibitemShut {NoStop}%
\bibitem [{\citenamefont {Cornish-Bowden}(2004)}]{cornish-bowden:book2004}%
  \BibitemOpen
  \bibfield  {author} {\bibinfo {author} {\bibfnamefont {A.}~\bibnamefont
  {Cornish-Bowden}},\ }\href@noop {} {\emph {\bibinfo {title} {Fundamentals of
  Enzyme Kinetics}}}\ (\bibinfo  {publisher} {Portland Press},\ \bibinfo
  {address} {London},\ \bibinfo {year} {2004})\BibitemShut {NoStop}%
\bibitem [{\citenamefont {Alon}(2007)}]{alon:book2007}%
  \BibitemOpen
  \bibfield  {author} {\bibinfo {author} {\bibfnamefont {U.}~\bibnamefont
  {Alon}},\ }\href@noop {} {\emph {\bibinfo {title} {An Introduction to Systems
  Biology: Design Principles of Biological Circuits}}}\ (\bibinfo  {publisher}
  {CRC Press},\ \bibinfo {address} {London},\ \bibinfo {year}
  {2007})\BibitemShut {NoStop}%
\bibitem [{\citenamefont {Rubin}\ and\ \citenamefont
  {Sollich}(2016)}]{rubin:JCP2016}%
  \BibitemOpen
  \bibfield  {author} {\bibinfo {author} {\bibfnamefont {K.~J.}\ \bibnamefont
  {Rubin}}\ and\ \bibinfo {author} {\bibfnamefont {P.}~\bibnamefont
  {Sollich}},\ }\bibfield  {title} {\enquote {\bibinfo {title}
  {Michaelis-{M}enten dynamics in protein subnetworks},}\ }\href
  {https://doi.org/10.1063/1.4947478} {\bibfield  {journal} {\bibinfo
  {journal} {J. Chem. Phys.}\ }\textbf {\bibinfo {volume} {144}},\ \bibinfo
  {pages} {174114} (\bibinfo {year} {2016})}\BibitemShut {NoStop}%
\bibitem [{\citenamefont {Milster}\ \emph {et~al.}(2023)\citenamefont
  {Milster}, \citenamefont {Darwish}, \citenamefont {G\"oth},\ and\
  \citenamefont {Dzubiella}}]{milster:PRE2024}%
  \BibitemOpen
  \bibfield  {author} {\bibinfo {author} {\bibfnamefont {S.}~\bibnamefont
  {Milster}}, \bibinfo {author} {\bibfnamefont {A.}~\bibnamefont {Darwish}},
  \bibinfo {author} {\bibfnamefont {N.}~\bibnamefont {G\"oth}},\ and\ \bibinfo
  {author} {\bibfnamefont {J.}~\bibnamefont {Dzubiella}},\ }\bibfield  {title}
  {\enquote {\bibinfo {title} {Synergistic chemomechanical dynamics of
  feedback-controlled microreactors},}\ }\href
  {https://doi.org/10.1103/PhysRevE.108.L042601} {\bibfield  {journal}
  {\bibinfo  {journal} {Phys. Rev. E}\ }\textbf {\bibinfo {volume} {108}},\
  \bibinfo {pages} {L042601} (\bibinfo {year} {2023})}\BibitemShut {NoStop}%
\bibitem [{\citenamefont {Alberty}\ and\ \citenamefont
  {Massey}(1954)}]{alberty:BBA1954}%
  \BibitemOpen
  \bibfield  {author} {\bibinfo {author} {\bibfnamefont {R.~A.}\ \bibnamefont
  {Alberty}}\ and\ \bibinfo {author} {\bibfnamefont {V.}~\bibnamefont
  {Massey}},\ }\bibfield  {title} {\enquote {\bibinfo {title} {On the
  interpretation of the {pH} variation of the maximum initial velocity of an
  enzyme-catalyzed reaction},}\ }\href
  {https://doi.org/10.1016/0006-3002(54)90340-6} {\bibfield  {journal}
  {\bibinfo  {journal} {Biochim. Biophys. Acta}\ }\textbf {\bibinfo {volume}
  {13}},\ \bibinfo {pages} {347--353} (\bibinfo {year} {1954})}\BibitemShut
  {NoStop}%
\bibitem [{\citenamefont {Fidaleo}\ and\ \citenamefont
  {Lavecchia}(2003)}]{fidaleo:CBEQ2003}%
  \BibitemOpen
  \bibfield  {author} {\bibinfo {author} {\bibfnamefont {M.}~\bibnamefont
  {Fidaleo}}\ and\ \bibinfo {author} {\bibfnamefont {R.}~\bibnamefont
  {Lavecchia}},\ }\bibfield  {title} {\enquote {\bibinfo {title} {Kinetic study
  of enzymatic urea hydrolysis in the p{H} range 4–9},}\ }\href
  {https://doi.org/10.15255/CABEQ.2014.599} {\bibfield  {journal} {\bibinfo
  {journal} {Chem. Biochem. Eng. Q.}\ }\textbf {\bibinfo {volume} {17}},\
  \bibinfo {pages} {311--318} (\bibinfo {year} {2003})}\BibitemShut {NoStop}%
\bibitem [{\citenamefont {Casey}, \citenamefont {Grinstein},\ and\
  \citenamefont {Orlowski}(2010)}]{casey:NRMCB2010}%
  \BibitemOpen
  \bibfield  {author} {\bibinfo {author} {\bibfnamefont {J.~R.}\ \bibnamefont
  {Casey}}, \bibinfo {author} {\bibfnamefont {S.}~\bibnamefont {Grinstein}},\
  and\ \bibinfo {author} {\bibfnamefont {J.}~\bibnamefont {Orlowski}},\
  }\bibfield  {title} {\enquote {\bibinfo {title} {Sensors and regulators of
  intracellular {pH}},}\ }\href {https://doi.org/10.1038/nrm2820} {\bibfield
  {journal} {\bibinfo  {journal} {Nat. Rev. Mol. Cell Biol.}\ }\textbf
  {\bibinfo {volume} {11}},\ \bibinfo {pages} {50--61} (\bibinfo {year}
  {2010})}\BibitemShut {NoStop}%
\bibitem [{\citenamefont {R\'{a}bai}(1998)}]{rabai:ACHMC1998}%
  \BibitemOpen
  \bibfield  {author} {\bibinfo {author} {\bibfnamefont {G.}~\bibnamefont
  {R\'{a}bai}},\ }\bibfield  {title} {\enquote {\bibinfo {title} {Modeling and
  designing of {pH}-controlled bistability, oscillations and chaos in a
  continuous-flow stirred reactor},}\ }\href@noop {} {\bibfield  {journal}
  {\bibinfo  {journal} {ACH -- Models Chem. 1998, 135, 381}\ }\textbf {\bibinfo
  {volume} {135}},\ \bibinfo {pages} {381--392} (\bibinfo {year}
  {1998})}\BibitemShut {NoStop}%
\bibitem [{\citenamefont {McIlwaine}\ \emph {et~al.}(2006)\citenamefont
  {McIlwaine}, \citenamefont {Kovacs}, \citenamefont {Scott},\ and\
  \citenamefont {Taylor}}]{mcilwaine:CPL2006}%
  \BibitemOpen
  \bibfield  {author} {\bibinfo {author} {\bibfnamefont {R.}~\bibnamefont
  {McIlwaine}}, \bibinfo {author} {\bibfnamefont {K.}~\bibnamefont {Kovacs}},
  \bibinfo {author} {\bibfnamefont {S.~K.}\ \bibnamefont {Scott}},\ and\
  \bibinfo {author} {\bibfnamefont {A.~F.}\ \bibnamefont {Taylor}},\ }\bibfield
   {title} {\enquote {\bibinfo {title} {A novel route to {pH} oscillators},}\
  }\href {https://doi.org/10.1016/j.cplett.2005.09.116} {\bibfield  {journal}
  {\bibinfo  {journal} {Chem. Phys. Lett.}\ }\textbf {\bibinfo {volume}
  {417}},\ \bibinfo {pages} {39--42} (\bibinfo {year} {2006})}\BibitemShut
  {NoStop}%
\bibitem [{\citenamefont {Orb{\'{a}}n}, \citenamefont {Kurin-Csörgei},\ and\
  \citenamefont {Epstein}(2015)}]{orban:ACR2015}%
  \BibitemOpen
  \bibfield  {author} {\bibinfo {author} {\bibfnamefont {M.}~\bibnamefont
  {Orb{\'{a}}n}}, \bibinfo {author} {\bibfnamefont {K.}~\bibnamefont
  {Kurin-Csörgei}},\ and\ \bibinfo {author} {\bibfnamefont {I.~R.}\
  \bibnamefont {Epstein}},\ }\bibfield  {title} {\enquote {\bibinfo {title}
  {{pH}-regulated chemical oscillators},}\ }\href
  {https://doi.org/10.1021/ar5004237} {\bibfield  {journal} {\bibinfo
  {journal} {Acc. Chem. Res.}\ }\textbf {\bibinfo {volume} {48}},\ \bibinfo
  {pages} {593--601} (\bibinfo {year} {2015})}\BibitemShut {NoStop}%
\bibitem [{\citenamefont {Horv\'{a}th}, \citenamefont {Szalai},\ and\
  \citenamefont {De~Kepper}(2018)}]{horvath:APR2018}%
  \BibitemOpen
  \bibfield  {author} {\bibinfo {author} {\bibfnamefont {J.}~\bibnamefont
  {Horv\'{a}th}}, \bibinfo {author} {\bibfnamefont {I.}~\bibnamefont
  {Szalai}},\ and\ \bibinfo {author} {\bibfnamefont {P.}~\bibnamefont
  {De~Kepper}},\ }\bibfield  {title} {\enquote {\bibinfo {title} {Designing
  stationary reaction--diffusion patterns in {pH} self-activated systems},}\
  }\href {https://doi.org/10.1021/acs.accounts.8b00441} {\bibfield  {journal}
  {\bibinfo  {journal} {Acc. Chem. Res.}\ }\textbf {\bibinfo {volume} {51}},\
  \bibinfo {pages} {3183--3190} (\bibinfo {year} {2018})}\BibitemShut {NoStop}%
\bibitem [{\citenamefont {Dúzs}, \citenamefont {Lagzi},\ and\ \citenamefont
  {Szalai}(2023)}]{duzs:CSC2023}%
  \BibitemOpen
  \bibfield  {author} {\bibinfo {author} {\bibfnamefont {B.}~\bibnamefont
  {Dúzs}}, \bibinfo {author} {\bibfnamefont {I.}~\bibnamefont {Lagzi}},\ and\
  \bibinfo {author} {\bibfnamefont {I.}~\bibnamefont {Szalai}},\ }\bibfield
  {title} {\enquote {\bibinfo {title} {Functional rhythmic chemical systems
  governed by {pH}-driven kinetic feedback},}\ }\href
  {https://doi.org/10.1002/syst.202200032} {\bibfield  {journal} {\bibinfo
  {journal} {ChemSystemsChem}\ }\textbf {\bibinfo {volume} {5}},\ \bibinfo
  {pages} {e202200032} (\bibinfo {year} {2023})}\BibitemShut {NoStop}%
\bibitem [{\citenamefont {Miele}\ \emph {et~al.}(2016)\citenamefont {Miele},
  \citenamefont {B{\'a}ns{\'a}gi}, \citenamefont {Taylor}, \citenamefont
  {Stano},\ and\ \citenamefont {Rossi}}]{miele:Proc2016}%
  \BibitemOpen
  \bibfield  {author} {\bibinfo {author} {\bibfnamefont {Y.}~\bibnamefont
  {Miele}}, \bibinfo {author} {\bibfnamefont {T.}~\bibnamefont
  {B{\'a}ns{\'a}gi}}, \bibinfo {author} {\bibfnamefont {A.~F.}\ \bibnamefont
  {Taylor}}, \bibinfo {author} {\bibfnamefont {P.}~\bibnamefont {Stano}},\ and\
  \bibinfo {author} {\bibfnamefont {F.}~\bibnamefont {Rossi}},\ }\bibfield
  {title} {\enquote {\bibinfo {title} {Engineering enzyme-driven dynamic
  behaviour in lipid vesicles},}\ }in\ \href
  {https://doi.org/10.1007/978-3-319-32695-5_18} {\emph {\bibinfo {booktitle}
  {Advances in Artificial Life, Evolutionary Computation and Systems
  Chemistry}}},\ \bibinfo {editor} {edited by\ \bibinfo {editor} {\bibfnamefont
  {F.}~\bibnamefont {Rossi}}, \bibinfo {editor} {\bibfnamefont
  {F.}~\bibnamefont {Mavelli}}, \bibinfo {editor} {\bibfnamefont
  {P.}~\bibnamefont {Stano}},\ and\ \bibinfo {editor} {\bibfnamefont
  {D.}~\bibnamefont {Caivano}}}\ (\bibinfo  {publisher} {Springer International
  Publishing},\ \bibinfo {address} {Cham},\ \bibinfo {year} {2016})\ pp.\
  \bibinfo {pages} {197--208}\BibitemShut {NoStop}%
\bibitem [{\citenamefont {Miele}\ \emph {et~al.}(2018)\citenamefont {Miele},
  \citenamefont {B\'ans\'agi}, \citenamefont {Taylor},\ and\ \citenamefont
  {Rossi}}]{miele:LNBE2018}%
  \BibitemOpen
  \bibfield  {author} {\bibinfo {author} {\bibfnamefont {Y.}~\bibnamefont
  {Miele}}, \bibinfo {author} {\bibfnamefont {T.}~\bibnamefont {B\'ans\'agi}},
  \bibinfo {author} {\bibfnamefont {A.~F.}\ \bibnamefont {Taylor}},\ and\
  \bibinfo {author} {\bibfnamefont {F.}~\bibnamefont {Rossi}},\ }\bibfield
  {title} {\enquote {\bibinfo {title} {Modelling approach to enzymatic {pH}
  oscillators in giant lipid vesicles},}\ }in\ \href
  {https://doi.org/10.1007/978-3-319-62027-5_6} {\emph {\bibinfo {booktitle}
  {Adv. Bionanomater.: Lecture Notes in Bioengineering}}},\ \bibinfo {editor}
  {edited by\ \bibinfo {editor} {\bibfnamefont {S.}~\bibnamefont {Piotto}},
  \bibinfo {editor} {\bibfnamefont {F.}~\bibnamefont {Rossi}}, \bibinfo
  {editor} {\bibfnamefont {S.}~\bibnamefont {Concilio}}, \bibinfo {editor}
  {\bibfnamefont {E.}~\bibnamefont {Reverchon}},\ and\ \bibinfo {editor}
  {\bibfnamefont {G.}~\bibnamefont {Cattaneo}}}\ (\bibinfo  {publisher}
  {Springer},\ \bibinfo {address} {Cham},\ \bibinfo {year} {2018})\ pp.\
  \bibinfo {pages} {63--74}\BibitemShut {NoStop}%
\bibitem [{\citenamefont {Muzika}\ \emph {et~al.}(2019)\citenamefont {Muzika},
  \citenamefont {Růžička}, \citenamefont {Schreiberová},\ and\
  \citenamefont {Schreiber}}]{muzika:PCCP2019}%
  \BibitemOpen
  \bibfield  {author} {\bibinfo {author} {\bibfnamefont {F.}~\bibnamefont
  {Muzika}}, \bibinfo {author} {\bibfnamefont {M.}~\bibnamefont {Růžička}},
  \bibinfo {author} {\bibfnamefont {L.}~\bibnamefont {Schreiberová}},\ and\
  \bibinfo {author} {\bibfnamefont {I.}~\bibnamefont {Schreiber}},\ }\bibfield
  {title} {\enquote {\bibinfo {title} {Oscillations of p{H} in the
  urea–urease system in a membrane reactor},}\ }\href
  {https://doi.org/10.1039/C9CP00630C} {\bibfield  {journal} {\bibinfo
  {journal} {Phys. Chem. Chem. Phys.}\ }\textbf {\bibinfo {volume} {21}},\
  \bibinfo {pages} {8619--8622} (\bibinfo {year} {2019})}\BibitemShut {NoStop}%
\bibitem [{\citenamefont {Straube}\ \emph {et~al.}(2021)\citenamefont
  {Straube}, \citenamefont {Winkelmann}, \citenamefont {Sch\"utte},\ and\
  \citenamefont {H\"ofling}}]{straube:JPCL2021}%
  \BibitemOpen
  \bibfield  {author} {\bibinfo {author} {\bibfnamefont {A.~V.}\ \bibnamefont
  {Straube}}, \bibinfo {author} {\bibfnamefont {S.}~\bibnamefont {Winkelmann}},
  \bibinfo {author} {\bibfnamefont {C.}~\bibnamefont {Sch\"utte}},\ and\
  \bibinfo {author} {\bibfnamefont {F.}~\bibnamefont {H\"ofling}},\ }\bibfield
  {title} {\enquote {\bibinfo {title} {Stochastic p{H} oscillations in a model
  of the urea--urease reaction confined to lipid vesicles},}\ }\href
  {https://doi.org/10.1021/acs.jpclett.1c03016} {\bibfield  {journal} {\bibinfo
   {journal} {J. Phys. Chem. Lett.}\ }\textbf {\bibinfo {volume} {12}},\
  \bibinfo {pages} {9888--9893} (\bibinfo {year} {2021})}\BibitemShut {NoStop}%
\bibitem [{\citenamefont {Miele}\ \emph {et~al.}(2022)\citenamefont {Miele},
  \citenamefont {Jones}, \citenamefont {Rossi}, \citenamefont {Beales},\ and\
  \citenamefont {Taylor}}]{miele:JPCL2022}%
  \BibitemOpen
  \bibfield  {author} {\bibinfo {author} {\bibfnamefont {Y.}~\bibnamefont
  {Miele}}, \bibinfo {author} {\bibfnamefont {S.~J.}\ \bibnamefont {Jones}},
  \bibinfo {author} {\bibfnamefont {F.}~\bibnamefont {Rossi}}, \bibinfo
  {author} {\bibfnamefont {P.~A.}\ \bibnamefont {Beales}},\ and\ \bibinfo
  {author} {\bibfnamefont {A.~F.}\ \bibnamefont {Taylor}},\ }\bibfield  {title}
  {\enquote {\bibinfo {title} {Collective behavior of urease {pH} clocks in
  nano- and microvesicles controlled by fast ammonia transport},}\ }\href
  {https://doi.org/10.1021/acs.jpclett.2c00069} {\bibfield  {journal} {\bibinfo
   {journal} {J. Phys. Chem. Lett.}\ }\textbf {\bibinfo {volume} {13}},\
  \bibinfo {pages} {1979--1984} (\bibinfo {year} {2022})}\BibitemShut {NoStop}%
\bibitem [{\citenamefont {Leathard}, \citenamefont {Beales},\ and\
  \citenamefont {Taylor}(2023)}]{leathard:Ch23}%
  \BibitemOpen
  \bibfield  {author} {\bibinfo {author} {\bibfnamefont {A.~S.}\ \bibnamefont
  {Leathard}}, \bibinfo {author} {\bibfnamefont {P.~A.}\ \bibnamefont
  {Beales}},\ and\ \bibinfo {author} {\bibfnamefont {A.~F.}\ \bibnamefont
  {Taylor}},\ }\bibfield  {title} {\enquote {\bibinfo {title} {Design of
  oscillatory dynamics in numerical simulations of compartment-based enzyme
  systems},}\ }\href {https://doi.org/10.1063/5.0180256} {\bibfield  {journal}
  {\bibinfo  {journal} {Chaos}\ }\textbf {\bibinfo {volume} {33}},\ \bibinfo
  {pages} {123128} (\bibinfo {year} {2023})}\BibitemShut {NoStop}%
\bibitem [{\citenamefont {Ridgway-Brown}\ \emph {et~al.}(2025)\citenamefont
  {Ridgway-Brown}, \citenamefont {Leathard}, \citenamefont {France},
  \citenamefont {Muench}, \citenamefont {Webb}, \citenamefont {Jeuken},
  \citenamefont {Henderson}, \citenamefont {Taylor},\ and\ \citenamefont
  {Beales}}]{ridgway-brown:ACSO25}%
  \BibitemOpen
  \bibfield  {author} {\bibinfo {author} {\bibfnamefont {D.}~\bibnamefont
  {Ridgway-Brown}}, \bibinfo {author} {\bibfnamefont {A.~S.}\ \bibnamefont
  {Leathard}}, \bibinfo {author} {\bibfnamefont {O.}~\bibnamefont {France}},
  \bibinfo {author} {\bibfnamefont {S.~P.}\ \bibnamefont {Muench}}, \bibinfo
  {author} {\bibfnamefont {M.~E.}\ \bibnamefont {Webb}}, \bibinfo {author}
  {\bibfnamefont {L.~J.~C.}\ \bibnamefont {Jeuken}}, \bibinfo {author}
  {\bibfnamefont {P.~J.~F.}\ \bibnamefont {Henderson}}, \bibinfo {author}
  {\bibfnamefont {A.~F.}\ \bibnamefont {Taylor}},\ and\ \bibinfo {author}
  {\bibfnamefont {P.~A.}\ \bibnamefont {Beales}},\ }\bibfield  {title}
  {\enquote {\bibinfo {title} {Membrane transport modulates the {pH}-regulated
  feedback of an enzyme reaction confined within lipid vesicles},}\ }\href
  {https://doi.org/10.1021/acsnano.4c13048} {\bibfield  {journal} {\bibinfo
  {journal} {ACS Nano}\ }\textbf {\bibinfo {volume} {19}},\ \bibinfo {pages}
  {9814--9825} (\bibinfo {year} {2025})}\BibitemShut {NoStop}%
\bibitem [{\citenamefont {Itatani}\ \emph {et~al.}(2025)\citenamefont
  {Itatani}, \citenamefont {Holl\'{o}}, \citenamefont {Albanese}, \citenamefont
  {Valletti}, \citenamefont {Kurunczi}, \citenamefont {Horvath}, \citenamefont
  {Rossi},\ and\ \citenamefont {Lagzi}}]{itatani:CRRS2025}%
  \BibitemOpen
  \bibfield  {author} {\bibinfo {author} {\bibfnamefont {M.}~\bibnamefont
  {Itatani}}, \bibinfo {author} {\bibfnamefont {G.}~\bibnamefont {Holl\'{o}}},
  \bibinfo {author} {\bibfnamefont {P.}~\bibnamefont {Albanese}}, \bibinfo
  {author} {\bibfnamefont {N.}~\bibnamefont {Valletti}}, \bibinfo {author}
  {\bibfnamefont {S.}~\bibnamefont {Kurunczi}}, \bibinfo {author}
  {\bibfnamefont {R.}~\bibnamefont {Horvath}}, \bibinfo {author} {\bibfnamefont
  {F.}~\bibnamefont {Rossi}},\ and\ \bibinfo {author} {\bibfnamefont
  {I.}~\bibnamefont {Lagzi}},\ }\bibfield  {title} {\enquote {\bibinfo {title}
  {Temporal {pH} waveforms generated in an enzymatic reaction network in batch
  and cell-sized microcompartments},}\ }\href
  {https://doi.org/10.1016/j.xcrp.2024.102367} {\bibfield  {journal} {\bibinfo
  {journal} {Cell Rep. Phys. Sci.}\ }\textbf {\bibinfo {volume} {6}},\ \bibinfo
  {pages} {102367} (\bibinfo {year} {2025})}\BibitemShut {NoStop}%
\bibitem [{\citenamefont {Lente}, \citenamefont {Bazsa},\ and\ \citenamefont
  {Fábián}(2007)}]{lente:NJC2007}%
  \BibitemOpen
  \bibfield  {author} {\bibinfo {author} {\bibfnamefont {G.}~\bibnamefont
  {Lente}}, \bibinfo {author} {\bibfnamefont {G.}~\bibnamefont {Bazsa}},\ and\
  \bibinfo {author} {\bibfnamefont {I.}~\bibnamefont {Fábián}},\ }\bibfield
  {title} {\enquote {\bibinfo {title} {What is and what isn’t a clock
  reaction?}}\ }\href {https://doi.org/10.1039/B708846A} {\bibfield  {journal}
  {\bibinfo  {journal} {New J. Chem.}\ }\textbf {\bibinfo {volume} {31}},\
  \bibinfo {pages} {1707--1707} (\bibinfo {year} {2007})}\BibitemShut {NoStop}%
\bibitem [{\citenamefont {Hu}\ \emph {et~al.}(2010)\citenamefont {Hu},
  \citenamefont {Pojman}, \citenamefont {Scott}, \citenamefont {Wrobel},\ and\
  \citenamefont {Taylor}}]{hu:JPCB2010}%
  \BibitemOpen
  \bibfield  {author} {\bibinfo {author} {\bibfnamefont {G.}~\bibnamefont
  {Hu}}, \bibinfo {author} {\bibfnamefont {J.~A.}\ \bibnamefont {Pojman}},
  \bibinfo {author} {\bibfnamefont {S.~K.}\ \bibnamefont {Scott}}, \bibinfo
  {author} {\bibfnamefont {M.~M.}\ \bibnamefont {Wrobel}},\ and\ \bibinfo
  {author} {\bibfnamefont {A.~F.}\ \bibnamefont {Taylor}},\ }\bibfield  {title}
  {\enquote {\bibinfo {title} {Base-catalyzed feedback in the urea-urease
  reaction},}\ }\href {https://doi.org/10.1021/jp106532d} {\bibfield  {journal}
  {\bibinfo  {journal} {J. Phys. Chem. B}\ }\textbf {\bibinfo {volume} {114}},\
  \bibinfo {pages} {14059--14063} (\bibinfo {year} {2010})}\BibitemShut
  {NoStop}%
\bibitem [{\citenamefont {Bubanja}, \citenamefont {B{\'a}ns{\'a}gi},\ and\
  \citenamefont {Taylor}(2018)}]{bubanja:RKMC2018}%
  \BibitemOpen
  \bibfield  {author} {\bibinfo {author} {\bibfnamefont {I.~N.}\ \bibnamefont
  {Bubanja}}, \bibinfo {author} {\bibfnamefont {T.}~\bibnamefont
  {B{\'a}ns{\'a}gi}},\ and\ \bibinfo {author} {\bibfnamefont {A.~F.}\
  \bibnamefont {Taylor}},\ }\bibfield  {title} {\enquote {\bibinfo {title}
  {Kinetics of the urea-urease clock reaction with urease immobilized in
  hydrogel beads},}\ }\href {https://doi.org/10.1007/s11144-017-1296-6}
  {\bibfield  {journal} {\bibinfo  {journal} {React. Kinet. Mech. Catal.}\
  }\textbf {\bibinfo {volume} {123}},\ \bibinfo {pages} {177--185} (\bibinfo
  {year} {2018})}\BibitemShut {NoStop}%
\bibitem [{\citenamefont {Mai}\ \emph {et~al.}(2021)\citenamefont {Mai},
  \citenamefont {Bánsági}, \citenamefont {Taylor},\ and\ \citenamefont
  {Pojman}}]{mai:CC2021}%
  \BibitemOpen
  \bibfield  {author} {\bibinfo {author} {\bibfnamefont {A.~Q.}\ \bibnamefont
  {Mai}}, \bibinfo {author} {\bibfnamefont {T.}~\bibnamefont {Bánsági}},
  \bibinfo {author} {\bibfnamefont {A.~F.}\ \bibnamefont {Taylor}},\ and\
  \bibinfo {author} {\bibfnamefont {J.~A.}\ \bibnamefont {Pojman}},\ }\bibfield
   {title} {\enquote {\bibinfo {title} {A novel approach to cure-on-demand
  coatings using ammonia to catalyze thiol-acrylate and thiol-epoxy
  reactions},}\ }\href {https://doi.org/10.1038/s42004-021-00538-7} {\bibfield
  {journal} {\bibinfo  {journal} {Commun. Chem.}\ }\textbf {\bibinfo {volume}
  {4}},\ \bibinfo {pages} {101} (\bibinfo {year} {2021})}\BibitemShut {NoStop}%
\bibitem [{\citenamefont {Galanics}\ and\ \citenamefont
  {Szalai}(2024)}]{galanics:CPC2024}%
  \BibitemOpen
  \bibfield  {author} {\bibinfo {author} {\bibfnamefont {C.}~\bibnamefont
  {Galanics}}\ and\ \bibinfo {author} {\bibfnamefont {I.}~\bibnamefont
  {Szalai}},\ }\bibfield  {title} {\enquote {\bibinfo {title} {Dynamics of {pH}
  oscillators in continuous stirred tanks in series},}\ }\href
  {https://doi.org/10.1002/cphc.202400610} {\bibfield  {journal} {\bibinfo
  {journal} {ChemPhysChem}\ }\textbf {\bibinfo {volume} {25}},\ \bibinfo
  {pages} {e202400610} (\bibinfo {year} {2024})}\BibitemShut {NoStop}%
\bibitem [{\citenamefont {Bashir}\ \emph {et~al.}(2024)\citenamefont {Bashir},
  \citenamefont {Leathard}, \citenamefont {McHugh}, \citenamefont {Hoffman},
  \citenamefont {Shaon}, \citenamefont {Belgodere}, \citenamefont {Taylor},\
  and\ \citenamefont {Pojman}}]{bashir:MSDE2024}%
  \BibitemOpen
  \bibfield  {author} {\bibinfo {author} {\bibfnamefont {N.}~\bibnamefont
  {Bashir}}, \bibinfo {author} {\bibfnamefont {A.~S.}\ \bibnamefont
  {Leathard}}, \bibinfo {author} {\bibfnamefont {M.}~\bibnamefont {McHugh}},
  \bibinfo {author} {\bibfnamefont {I.}~\bibnamefont {Hoffman}}, \bibinfo
  {author} {\bibfnamefont {F.}~\bibnamefont {Shaon}}, \bibinfo {author}
  {\bibfnamefont {J.~A.}\ \bibnamefont {Belgodere}}, \bibinfo {author}
  {\bibfnamefont {A.~F.}\ \bibnamefont {Taylor}},\ and\ \bibinfo {author}
  {\bibfnamefont {J.~A.}\ \bibnamefont {Pojman}},\ }\bibfield  {title}
  {\enquote {\bibinfo {title} {On the use of modelling antagonistic enzymes to
  aid in temporal programming of {pH} and pva--borate gelation},}\ }\href
  {https://doi.org/10.1039/D3ME00138E} {\bibfield  {journal} {\bibinfo
  {journal} {Mol. Syst. Des. Eng.}\ }\textbf {\bibinfo {volume} {9}},\ \bibinfo
  {pages} {372--381} (\bibinfo {year} {2024})}\BibitemShut {NoStop}%
\bibitem [{\citenamefont {Ivanov}, \citenamefont {Slivkov},\ and\ \citenamefont
  {Huck}(2025)}]{ivanov:ACIE2025}%
  \BibitemOpen
  \bibfield  {author} {\bibinfo {author} {\bibfnamefont {N.~M.}\ \bibnamefont
  {Ivanov}}, \bibinfo {author} {\bibfnamefont {A.~I.}\ \bibnamefont
  {Slivkov}},\ and\ \bibinfo {author} {\bibfnamefont {W.~T.~S.}\ \bibnamefont
  {Huck}},\ }\bibfield  {title} {\enquote {\bibinfo {title} {A urease-based
  {pH} photoswitch: {A} general route to light-to-{pH} transduction},}\ }\href
  {https://doi.org/10.1002/anie.202415614} {\bibfield  {journal} {\bibinfo
  {journal} {Angew. Chem. Int. Ed.}\ }\textbf {\bibinfo {volume} {64}},\
  \bibinfo {pages} {e202415614} (\bibinfo {year} {2025})}\BibitemShut {NoStop}%
\bibitem [{\citenamefont {Mahmud}\ and\ \citenamefont
  {Pojman}(2025)}]{mahmud:ACSO2025}%
  \BibitemOpen
  \bibfield  {author} {\bibinfo {author} {\bibfnamefont {M.~A.~A.}\
  \bibnamefont {Mahmud}}\ and\ \bibinfo {author} {\bibfnamefont {J.~A.}\
  \bibnamefont {Pojman}},\ }\bibfield  {title} {\enquote {\bibinfo {title} {A
  novel approach to cure-on-demand coatings using ammonia to catalyze
  thiol-acrylate and thiol-epoxy reactions},}\ }\href
  {https://doi.org/10.1021/acsomega.5c00523} {\bibfield  {journal} {\bibinfo
  {journal} {ACS Omega}\ }\textbf {\bibinfo {volume} {10}},\ \bibinfo {pages}
  {20417--20426} (\bibinfo {year} {2025})}\BibitemShut {NoStop}%
\bibitem [{\citenamefont {Bánsági}\ and\ \citenamefont
  {Taylor}(2014)}]{bansagi:JPCB2014}%
  \BibitemOpen
  \bibfield  {author} {\bibinfo {author} {\bibfnamefont {T.}~\bibnamefont
  {Bánsági}}\ and\ \bibinfo {author} {\bibfnamefont {A.~F.}\ \bibnamefont
  {Taylor}},\ }\bibfield  {title} {\enquote {\bibinfo {title} {Role of
  differential transport in an oscillatory enzyme reaction},}\ }\href
  {https://doi.org/10.1021/jp5019795} {\bibfield  {journal} {\bibinfo
  {journal} {J. Phys. Chem. B}\ }\textbf {\bibinfo {volume} {118}},\ \bibinfo
  {pages} {6092--6097} (\bibinfo {year} {2014})}\BibitemShut {NoStop}%
\bibitem [{\citenamefont {Straube}, \citenamefont {Winkelmann},\ and\
  \citenamefont {H\"ofling}(2023)}]{straube:JPCB2023}%
  \BibitemOpen
  \bibfield  {author} {\bibinfo {author} {\bibfnamefont {A.~V.}\ \bibnamefont
  {Straube}}, \bibinfo {author} {\bibfnamefont {S.}~\bibnamefont
  {Winkelmann}},\ and\ \bibinfo {author} {\bibfnamefont {F.}~\bibnamefont
  {H\"ofling}},\ }\bibfield  {title} {\enquote {\bibinfo {title} {Accurate
  reduced models for the p{H} oscillations in the urea--urease reaction
  confined to giant lipid vesicles},}\ }\href
  {https://doi.org/10.1021/acs.jpcb.2c09092} {\bibfield  {journal} {\bibinfo
  {journal} {J. Phys. Chem. B}\ }\textbf {\bibinfo {volume} {127}},\ \bibinfo
  {pages} {2955--2967} (\bibinfo {year} {2023})}\BibitemShut {NoStop}%
\bibitem [{\citenamefont {Kuehn}(2015)}]{kuehn:book2015}%
  \BibitemOpen
  \bibfield  {author} {\bibinfo {author} {\bibfnamefont {C.}~\bibnamefont
  {Kuehn}},\ }\href@noop {} {\emph {\bibinfo {title} {Multiple Time Scale
  Dynamics}}},\ Vol.\ \bibinfo {volume} {191}\ (\bibinfo  {publisher}
  {Springer},\ \bibinfo {year} {2015})\BibitemShut {NoStop}%
\bibitem [{\citenamefont {Wechselberger}(2020)}]{wechselberger:book2020}%
  \BibitemOpen
  \bibfield  {author} {\bibinfo {author} {\bibfnamefont {M.}~\bibnamefont
  {Wechselberger}},\ }\href@noop {} {\emph {\bibinfo {title} {Geometric
  Singular Perturbation Theory Beyond the Standard Form}}},\ Vol.~\bibinfo
  {volume} {6}\ (\bibinfo  {publisher} {Springer},\ \bibinfo {year}
  {2020})\BibitemShut {NoStop}%
\bibitem [{\citenamefont {Engel}\ and\ \citenamefont
  {Olicón~Méndez}(2023)}]{engel:A2023}%
  \BibitemOpen
  \bibfield  {author} {\bibinfo {author} {\bibfnamefont {M.}~\bibnamefont
  {Engel}}\ and\ \bibinfo {author} {\bibfnamefont {G.}~\bibnamefont
  {Olicón~Méndez}},\ }\href {https://doi.org/10.48550/arXiv.2311.00575}
  {\enquote {\bibinfo {title} {A singular perturbation analysis for the
  {B}russelator},}\ } (\bibinfo {year} {2023}),\ \Eprint
  {https://arxiv.org/abs/2311.00575} {arXiv:2311.00575 [math.DS]} \BibitemShut
  {NoStop}%
\bibitem [{\citenamefont {Gucwa}\ and\ \citenamefont
  {Szmolyan}(2009)}]{Gucwa:DCDS2009}%
  \BibitemOpen
  \bibfield  {author} {\bibinfo {author} {\bibfnamefont {I.}~\bibnamefont
  {Gucwa}}\ and\ \bibinfo {author} {\bibfnamefont {P.}~\bibnamefont
  {Szmolyan}},\ }\bibfield  {title} {\enquote {\bibinfo {title} {Geometric
  singular perturbation analysis of an autocatalator model},}\ }\href
  {https://doi.org/10.3934/dcdss.2009.2.783} {\bibfield  {journal} {\bibinfo
  {journal} {Discrete Contin. Dyn. Syst. Ser. S}\ }\textbf {\bibinfo {volume}
  {2}},\ \bibinfo {pages} {783--806} (\bibinfo {year} {2009})}\BibitemShut
  {NoStop}%
\bibitem [{\citenamefont {Kosiuk}\ and\ \citenamefont
  {Szmolyan}(2011)}]{Kosiuk:SIADS2011}%
  \BibitemOpen
  \bibfield  {author} {\bibinfo {author} {\bibfnamefont {I.}~\bibnamefont
  {Kosiuk}}\ and\ \bibinfo {author} {\bibfnamefont {P.}~\bibnamefont
  {Szmolyan}},\ }\bibfield  {title} {\enquote {\bibinfo {title} {Scaling in
  singular perturbation problems: blowing up a relaxation oscillator},}\ }\href
  {https://doi.org/10.1137/100814470} {\bibfield  {journal} {\bibinfo
  {journal} {SIAM J. Appl. Dyn. Syst.}\ }\textbf {\bibinfo {volume} {10}},\
  \bibinfo {pages} {1307--1343} (\bibinfo {year} {2011})}\BibitemShut {NoStop}%
\bibitem [{\citenamefont {Kuehn}\ and\ \citenamefont
  {Szmolyan}(2015)}]{kuehn:JNS2015}%
  \BibitemOpen
  \bibfield  {author} {\bibinfo {author} {\bibfnamefont {C.}~\bibnamefont
  {Kuehn}}\ and\ \bibinfo {author} {\bibfnamefont {P.}~\bibnamefont
  {Szmolyan}},\ }\bibfield  {title} {\enquote {\bibinfo {title} {Multiscale
  geometry of the {O}lsen model and non-classical relaxation oscillations},}\
  }\href {https://doi.org/10.1007/s00332-015-9235-z} {\bibfield  {journal}
  {\bibinfo  {journal} {J. Nonlinear Sci.}\ }\textbf {\bibinfo {volume} {25}},\
  \bibinfo {pages} {583--629} (\bibinfo {year} {2015})}\BibitemShut {NoStop}%
\bibitem [{\citenamefont {Acar}\ \emph {et~al.}(2025)\citenamefont {Acar},
  \citenamefont {Tatini}, \citenamefont {Romani}, \citenamefont {Ninham},
  \citenamefont {Rossi},\ and\ \citenamefont {{Lo Nostro}}}]{acar:CSBB2025}%
  \BibitemOpen
  \bibfield  {author} {\bibinfo {author} {\bibfnamefont {M.}~\bibnamefont
  {Acar}}, \bibinfo {author} {\bibfnamefont {D.}~\bibnamefont {Tatini}},
  \bibinfo {author} {\bibfnamefont {V.}~\bibnamefont {Romani}}, \bibinfo
  {author} {\bibfnamefont {B.~W.}\ \bibnamefont {Ninham}}, \bibinfo {author}
  {\bibfnamefont {F.}~\bibnamefont {Rossi}},\ and\ \bibinfo {author}
  {\bibfnamefont {P.}~\bibnamefont {{Lo Nostro}}},\ }\bibfield  {title}
  {\enquote {\bibinfo {title} {Curious effects of overlooked aspects on urease
  activity},}\ }\href {https://doi.org/10.1016/j.colsurfb.2024.114422}
  {\bibfield  {journal} {\bibinfo  {journal} {Colloids Surf. B Biointerfaces}\
  }\textbf {\bibinfo {volume} {247}},\ \bibinfo {pages} {114422} (\bibinfo
  {year} {2025})}\BibitemShut {NoStop}%
\bibitem [{\citenamefont {Fenichel}(1979)}]{fenichel:JDE1979}%
  \BibitemOpen
  \bibfield  {author} {\bibinfo {author} {\bibfnamefont {N.}~\bibnamefont
  {Fenichel}},\ }\bibfield  {title} {\enquote {\bibinfo {title} {Geometric
  singular perturbation theory for ordinary differential equations},}\ }\href
  {https://doi.org/10.1016/0022-0396(79)90152-9} {\bibfield  {journal}
  {\bibinfo  {journal} {J. Diff. Equat.}\ }\textbf {\bibinfo {volume} {31}},\
  \bibinfo {pages} {53--98} (\bibinfo {year} {1979})}\BibitemShut {NoStop}%
\bibitem [{\citenamefont {Krupa}\ and\ \citenamefont
  {Szmolyan}(2001{\natexlab{a}})}]{krupa:SIAM-JMA2001}%
  \BibitemOpen
  \bibfield  {author} {\bibinfo {author} {\bibfnamefont {M.}~\bibnamefont
  {Krupa}}\ and\ \bibinfo {author} {\bibfnamefont {P.}~\bibnamefont
  {Szmolyan}},\ }\bibfield  {title} {\enquote {\bibinfo {title} {Extending
  geometric singular perturbation theory to nonhyperbolic points -- fold and
  canard points in two dimensions},}\ }\href
  {https://doi.org/10.1137/S0036141099360919} {\bibfield  {journal} {\bibinfo
  {journal} {SIAM J. Math. Anal.}\ }\textbf {\bibinfo {volume} {33}},\ \bibinfo
  {pages} {286--314} (\bibinfo {year} {2001}{\natexlab{a}})}\BibitemShut
  {NoStop}%
\bibitem [{\citenamefont {Kuznetsov}(2004)}]{kuznetsov:book2004}%
  \BibitemOpen
  \bibfield  {author} {\bibinfo {author} {\bibfnamefont {Y.~A.}\ \bibnamefont
  {Kuznetsov}},\ }\href {https://doi.org/10.1007/978-1-4757-3978-7} {\emph
  {\bibinfo {title} {Elements of Applied Bifurcation Theory}}},\ \bibinfo
  {edition} {3rd}\ ed.,\ \bibinfo {series} {Applied Mathematical Sciences},
  Vol.\ \bibinfo {volume} {112}\ (\bibinfo  {publisher} {Springer},\ \bibinfo
  {address} {New York},\ \bibinfo {year} {2004})\BibitemShut {NoStop}%
\bibitem [{\citenamefont {Coddington}\ and\ \citenamefont
  {Levinson}(1955)}]{CoddingtonLevinson55}%
  \BibitemOpen
  \bibfield  {author} {\bibinfo {author} {\bibfnamefont {E.~A.}\ \bibnamefont
  {Coddington}}\ and\ \bibinfo {author} {\bibfnamefont {N.}~\bibnamefont
  {Levinson}},\ }\href@noop {} {\emph {\bibinfo {title} {Theory of ordinary
  differential equations}}}\ (\bibinfo  {publisher} {McGraw-Hill Book Co.,
  Inc., New York-Toronto-London},\ \bibinfo {year} {1955})\ pp.\ \bibinfo
  {pages} {xii+429}\BibitemShut {NoStop}%
\bibitem [{\citenamefont {Kuehn}\ \emph {et~al.}(2022)\citenamefont {Kuehn},
  \citenamefont {Berglund}, \citenamefont {Bick}, \citenamefont {Engel},
  \citenamefont {Hurth}, \citenamefont {Iuorio},\ and\ \citenamefont
  {Soresina}}]{kuehn:PD2022}%
  \BibitemOpen
  \bibfield  {author} {\bibinfo {author} {\bibfnamefont {C.}~\bibnamefont
  {Kuehn}}, \bibinfo {author} {\bibfnamefont {N.}~\bibnamefont {Berglund}},
  \bibinfo {author} {\bibfnamefont {C.}~\bibnamefont {Bick}}, \bibinfo {author}
  {\bibfnamefont {M.}~\bibnamefont {Engel}}, \bibinfo {author} {\bibfnamefont
  {T.}~\bibnamefont {Hurth}}, \bibinfo {author} {\bibfnamefont
  {A.}~\bibnamefont {Iuorio}},\ and\ \bibinfo {author} {\bibfnamefont
  {C.}~\bibnamefont {Soresina}},\ }\bibfield  {title} {\enquote {\bibinfo
  {title} {A general view on double limits in differential equations},}\ }\href
  {https://doi.org/10.1016/j.physd.2021.133105} {\bibfield  {journal} {\bibinfo
   {journal} {Phys. D}\ }\textbf {\bibinfo {volume} {431}},\ \bibinfo {pages}
  {Paper No. 133105, 26} (\bibinfo {year} {2022})}\BibitemShut {NoStop}%
\bibitem [{\citenamefont {Thomas}, \citenamefont {Straube},\ and\ \citenamefont
  {Grima}(2010)}]{thomas:JCP2010}%
  \BibitemOpen
  \bibfield  {author} {\bibinfo {author} {\bibfnamefont {P.}~\bibnamefont
  {Thomas}}, \bibinfo {author} {\bibfnamefont {A.~V.}\ \bibnamefont
  {Straube}},\ and\ \bibinfo {author} {\bibfnamefont {R.}~\bibnamefont
  {Grima}},\ }\bibfield  {title} {\enquote {\bibinfo {title} {Stochastic theory
  of large-scale enzyme-reaction networks: {F}inite copy number corrections to
  rate equation models},}\ }\href {https://doi.org/10.1063/1.3505552}
  {\bibfield  {journal} {\bibinfo  {journal} {J. Chem. Phys.}\ }\textbf
  {\bibinfo {volume} {133}},\ \bibinfo {pages} {195101} (\bibinfo {year}
  {2010})}\BibitemShut {NoStop}%
\bibitem [{\citenamefont {Thomas}, \citenamefont {Straube},\ and\ \citenamefont
  {Grima}(2012)}]{thomas:BMCSB2012}%
  \BibitemOpen
  \bibfield  {author} {\bibinfo {author} {\bibfnamefont {P.}~\bibnamefont
  {Thomas}}, \bibinfo {author} {\bibfnamefont {A.~V.}\ \bibnamefont
  {Straube}},\ and\ \bibinfo {author} {\bibfnamefont {R.}~\bibnamefont
  {Grima}},\ }\bibfield  {title} {\enquote {\bibinfo {title} {The slow-scale
  linear noise approximation: An accurate, reduced stochastic description of
  biochemical networks under timescale separation conditions},}\ }\href
  {https://doi.org/10.1186/1752-0509-6-39} {\bibfield  {journal} {\bibinfo
  {journal} {BMC Syst. Biol.}\ }\textbf {\bibinfo {volume} {6}},\ \bibinfo
  {pages} {39} (\bibinfo {year} {2012})}\BibitemShut {NoStop}%
\bibitem [{\citenamefont {Thomas}\ \emph {et~al.}(2013)\citenamefont {Thomas},
  \citenamefont {Straube}, \citenamefont {Timmer}, \citenamefont {Fleck},\ and\
  \citenamefont {Grima}}]{thomas:JTB2013}%
  \BibitemOpen
  \bibfield  {author} {\bibinfo {author} {\bibfnamefont {P.}~\bibnamefont
  {Thomas}}, \bibinfo {author} {\bibfnamefont {A.~V.}\ \bibnamefont {Straube}},
  \bibinfo {author} {\bibfnamefont {J.}~\bibnamefont {Timmer}}, \bibinfo
  {author} {\bibfnamefont {C.}~\bibnamefont {Fleck}},\ and\ \bibinfo {author}
  {\bibfnamefont {R.}~\bibnamefont {Grima}},\ }\bibfield  {title} {\enquote
  {\bibinfo {title} {Signatures of nonlinearity in single cell noise-induced
  oscillations},}\ }\href {https://doi.org/10.1016/j.jtbi.2013.06.021}
  {\bibfield  {journal} {\bibinfo  {journal} {J. Theor. Biol.}\ }\textbf
  {\bibinfo {volume} {335}},\ \bibinfo {pages} {222--234} (\bibinfo {year}
  {2013})}\BibitemShut {NoStop}%
\bibitem [{\citenamefont {Engel}\ and\ \citenamefont
  {Olic\'{o}n-M\'{e}ndez}(2024)}]{Engel:CM2025}%
  \BibitemOpen
  \bibfield  {author} {\bibinfo {author} {\bibfnamefont {M.}~\bibnamefont
  {Engel}}\ and\ \bibinfo {author} {\bibfnamefont {G.}~\bibnamefont
  {Olic\'{o}n-M\'{e}ndez}},\ }\bibfield  {title} {\enquote {\bibinfo {title}
  {Noise-induced instabilities in a stochastic {B}russelator},}\ }in\ \href
  {https://doi.org/10.1090/conm/806/16149} {\emph {\bibinfo {booktitle} {Topics
  in multiple time scale dynamics}}},\ \bibinfo {series} {Contemp. Math.},
  Vol.\ \bibinfo {volume} {806}\ (\bibinfo  {publisher} {Amer. Math. Soc.,
  [Providence], RI},\ \bibinfo {year} {[2024] \copyright 2024})\ pp.\ \bibinfo
  {pages} {29--47}\BibitemShut {NoStop}%
\bibitem [{\citenamefont {Winkelmann}\ and\ \citenamefont
  {Sch{\"u}tte}(2020)}]{winkelmann2020stochastic}%
  \BibitemOpen
  \bibfield  {author} {\bibinfo {author} {\bibfnamefont {S.}~\bibnamefont
  {Winkelmann}}\ and\ \bibinfo {author} {\bibfnamefont {C.}~\bibnamefont
  {Sch{\"u}tte}},\ }\href@noop {} {\emph {\bibinfo {title} {Stochastic Dynamics
  in Computational Biology}}},\ Vol.\ \bibinfo {volume} {645}\ (\bibinfo
  {publisher} {Springer},\ \bibinfo {year} {2020})\BibitemShut {NoStop}%
\bibitem [{\citenamefont {Berglund}\ and\ \citenamefont
  {Gentz}(2006)}]{berglund2006noise}%
  \BibitemOpen
  \bibfield  {author} {\bibinfo {author} {\bibfnamefont {N.}~\bibnamefont
  {Berglund}}\ and\ \bibinfo {author} {\bibfnamefont {B.}~\bibnamefont
  {Gentz}},\ }\href@noop {} {\emph {\bibinfo {title} {Noise-Induced phenomena
  in Slow-Fast Dynamical Systems: A Sample-Paths Approach}}},\ Probability and
  Its Applications\ (\bibinfo  {publisher} {Springer},\ \bibinfo {year}
  {2006})\BibitemShut {NoStop}%
\bibitem [{\citenamefont {Benoît}\ \emph {et~al.}(1997)\citenamefont
  {Benoît}, \citenamefont {Callot}, \citenamefont {Diener},\ and\
  \citenamefont {Diener}}]{benoit:CM1981}%
  \BibitemOpen
  \bibfield  {author} {\bibinfo {author} {\bibfnamefont {E.}~\bibnamefont
  {Benoît}}, \bibinfo {author} {\bibfnamefont {J.~F.}\ \bibnamefont {Callot}},
  \bibinfo {author} {\bibfnamefont {F.}~\bibnamefont {Diener}},\ and\ \bibinfo
  {author} {\bibfnamefont {M.}~\bibnamefont {Diener}},\ }\bibfield  {title}
  {\enquote {\bibinfo {title} {Chasse au canard},}\ }\href
  {https://doi.org/10.1038/43199} {\bibfield  {journal} {\bibinfo  {journal}
  {Collectanea Mathematica}\ }\textbf {\bibinfo {volume} {31-32}},\ \bibinfo
  {pages} {37--119} (\bibinfo {year} {1997})}\BibitemShut {NoStop}%
\bibitem [{\citenamefont {Krupa}\ and\ \citenamefont
  {Szmolyan}(2001{\natexlab{b}})}]{krupa:JDE2001}%
  \BibitemOpen
  \bibfield  {author} {\bibinfo {author} {\bibfnamefont {M.}~\bibnamefont
  {Krupa}}\ and\ \bibinfo {author} {\bibfnamefont {P.}~\bibnamefont
  {Szmolyan}},\ }\bibfield  {title} {\enquote {\bibinfo {title} {Relaxation
  oscillation and canard explosion},}\ }\href
  {https://doi.org/10.1006/jdeq.2000.3929} {\bibfield  {journal} {\bibinfo
  {journal} {J. Diff. Equat.}\ }\textbf {\bibinfo {volume} {174}},\ \bibinfo
  {pages} {312--368} (\bibinfo {year} {2001}{\natexlab{b}})}\BibitemShut
  {NoStop}%
\end{thebibliography}%

\end{document}